\newcommand{\isomto}{\overset{\sim}{\to}}
\newcommand{\longisomto}{\overset{\sim}{\longrightarrow}}
\newcommand{\surjto}{\twoheadrightarrow}
\newcommand{\injto}{\hookrightarrow}
\newcommand{\longto}{\longrightarrow}
\newcommand*\rel@kern[1]{\kern#1\dimexpr\macc@kerna}
\newcommand*\widebar[1]{%
  \begingroup
  \def\mathaccent##1##2{%
    \rel@kern{0.8}%
    \overline{\rel@kern{-0.8}\macc@nucleus\rel@kern{0.2}}%
    \rel@kern{-0.2}%
  }%
  \macc@depth\@ne
  \let\math@bgroup\@empty \let\math@egroup\macc@set@skewchar
  \mathsurround\z@ \frozen@everymath{\mathgroup\macc@group\relax}%
  \macc@set@skewchar\relax
  \let\mathaccentV\macc@nested@a
  \macc@nested@a\relax111{#1}%
  \endgroup
}
\DeclareMathOperator\Sym{Sym}
\DeclareMathOperator\Aut{Aut}
\DeclareMathOperator\Out{Out}
\DeclareMathOperator\End{End}
\DeclareMathOperator\Spec{Spec}
\DeclareMathOperator\im{im}
\DeclareMathOperator\diag{diag}
\DeclareMathOperator\Pic{Pic}
\DeclareMathOperator\GL{GL}
\DeclareMathOperator\SL{SL}
\DeclareMathOperator\fsl{\mathfrak{sl}}
\DeclareMathOperator\fgl{\mathfrak{gl}}
\DeclareMathOperator\fso{\mathfrak{so}}
\DeclareMathOperator\ad{ad}
\DeclareMathOperator\SO{SO}
\DeclareMathOperator\bSO{\mathbf{SO}}
\DeclareMathOperator\bSL{\mathbf{SL}}
\DeclareMathOperator\rO{O}
\DeclareMathOperator\Spin{Spin}
\DeclareMathOperator\bSpin{\mathbf{Spin}}
\DeclareMathOperator\GSpin{GSpin}
\DeclareMathOperator\bGSpin{\mathbf{GSpin}}
\DeclareMathOperator\DMon{DMon}
\DeclareMathOperator\Mon{Mon}
\DeclareMathOperator\Gal{Gal}
\DeclareMathOperator\Ext{Ext}
\DeclareMathOperator\ch{ch}
\def\bQ{{\mathbf{Q}}} \def\bZ{{\mathbf{Z}}} 
 \def\bG{{\mathbf{G}}} \def\bR{{\mathbf{R}}}
\def\bC{{\mathbf{C}}}
\def\cO{{\mathcal{O}}}  
\def\cF{{\mathcal{F}}} \def\cE{{\mathcal{E}}} \def\cL{{\mathcal{L}}} 
 \def\cB{{\mathcal{B}}}
 \def\cD{{\mathcal D}} \def\cQ{{\mathcal{Q}}}
  \def\cX{{\mathcal{X}}}
 \def\cP{{\mathcal P}}
  \def\rH{{\rm H}} \def\rmd{{\rm d}}
  \def\rHH{{\rm HH}} \def\rK{{\mathrm K}}
\def\SH{{\mathrm{SH}}}
\def\fS{{\mathfrak{S}}} \def\fg{{\mathfrak{g}}} \def\fa{{\mathfrak{a}}}
\def\pt{{\rm pt}}
\def\an{{\rm an}}
\def\ev{{\rm ev}}
\def\odd{{\rm odd}}
\def\top{{\rm top}}
\def\Td{{\rm Td}}
\def\BKR{{\rm BKR}}
\def\NS{{\rm NS}}
\theoremstyle{plain}
\newtheorem{theorem}{Theorem}[section]
\newtheorem*{theorem*}{Theorem}
\newtheorem{corollary}[theorem]{Corollary}
\newtheorem*{corollary*}{Corollary}
\newtheorem{lemma}[theorem]{Lemma}
\newtheorem{bigthm}{Theorem}
\newtheorem{proposition}[theorem]{Proposition}
\theoremstyle{definition}
\newtheorem{definition}[theorem]{Definition}
\newtheorem*{definition*}{Definition}
\newtheorem*{remark*}{Remark}
\newtheorem{remark}[theorem]{Remark}
\newtheorem*{question*}{Question}
\begin{document}

\title
 {Derived equivalences of hyperk\"ahler varieties}

\author
 {Lenny Taelman}

\begin{abstract}
We show that the Looijenga--Lunts--Verbitsky Lie algebra acting on the cohomology of a hyperk\"ahler variety is a derived invariant, and obtain from this a number of consequences for the action on cohomology of derived equivalences between hyperk\"ahler varieties. 

This includes a proof that derived equivalent hyperk\"ahler varieties have isomorphic $\bQ$-Hodge structures, the construction of a rational `Mukai lattice' functorial for derived equivalences, and the computation (up to index $2$) of the image of the group of auto-equivalences on the cohomology of certain Hilbert squares of K3 surfaces. 

%
\end{abstract}

\maketitle

\section{Introduction}

\subsection{Background}We briefly recall the background to our results. We refer to \cite{HuybrechtsFM} for more details. 
For a smooth projective complex variety $X$ we denote by $\cD X$ the bounded derived category of coherent sheaves on $X$. By a theorem of Orlov \cite{Orlov03} any (exact, $\bC$-linear) equivalence $\Phi \colon \cD X_1 \isomto \cD X_2$ comes from a Fourier--Mukai kernel $\cP \in \cD(X_1\times X_2)$, and convolution with the Mukai vector $v(\cP) \in \rH(X_1\times X_2,\bQ)$ defines an isomorphism 
\[
	\Phi^\rH\colon \rH(X_1,\bQ) \isomto \rH(X_2,\bQ)
\]
between the total cohomology of $X_1$ and $X_2$. This isomorphism is not graded, and respects the Hodge structures only up to Tate twists. Nonetheless, Orlov has conjectured \cite{Orlov05} that if $X_1$ and $X_2$ are derived equivalent, then for every $i$ there exist (non-canonical) isomorphisms $\rH^i(X_1,\bQ)\cong \rH^i(X_2,\bQ)$ of $\bQ$-Hodge structures.

For every $X$ we have a representation
\[
	\rho_X\colon  \Aut(\cD X) \to \GL(\rH(X,\bQ)),\, \Phi \mapsto \Phi^\rH.
\]
Its image is known for varieties with ample or anti-ample canonical class (in which case $\Aut(\cD X)$ is small and well-understood \cite{BondalOrlov01}), for abelian varieties \cite{GolyshevLuntsOrlov01}, and for K3 surfaces.
To place our results in context, we recall the description of the image for K3 surfaces. 

Let $X$ be a K3 surface. Consider the \emph{Mukai lattice}
\[
	\widetilde\rH(X,\bZ) := \rH^0(X,\bZ) \oplus \rH^2(X,\bZ(1)) \oplus \rH^4(X,\bZ(2)).
\]
This is a Hodge structure of weight $0$, and it comes equipped with a perfect bilinear form $b$ of signature $(4,20)$. For convenience, we denote by $\alpha$ and $\beta$   the natural generators of $\rH^0(X,\bZ)$ and $\rH^4(X,\bZ(2))$ respectively, so that we have
$\widetilde\rH(X,\bZ) = \bZ \alpha \oplus \rH^2(X,\bZ(1)) \oplus \bZ \beta$. The pairing $b$ is the orthogonal sum of 
 the intersection pairing on $\rH^2(X,\bZ(1))$ and the pairing on $\bZ\alpha \oplus \bZ\beta$ given by $b(\alpha,\alpha)=b(\beta,\beta)=0$ and $b(\alpha,\beta)=-1$.

It was observed by Mukai \cite{Mukai87} that if $\Phi\colon \cD X_1 \isomto \cD X_2$ is a derived equivalence between K3 surfaces, then $\Phi^\rH$ restricts to an isomorphism $\Phi^{\widetilde\rH}\colon \widetilde\rH(X_1,\bZ) \to \widetilde\rH(X_2,\bZ)$ respecting the pairing and Hodge structures. Denote by $\Aut(\widetilde\rH(X,\bZ))$ the group of isometries of $\widetilde\rH(X,\bZ)$ respecting the Hodge structure, and  by $\Aut^+(\widetilde\rH(X,\bZ))$ the subgroup (of index $2$) consisting of those isometries that respect the orientation on a four-dimensional positive definite subspace of $\widetilde\rH(X,\bR)$.

\begin{theorem}[\cite{Mukai87,Orlov97,HLOY,Ploog05,HMS}]
Let $X$ be a K3 surface. Then the image of  $\rho_X$ is $\Aut^+(\widetilde \rH(X,\bZ))$.\qed
\end{theorem}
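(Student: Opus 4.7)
The statement decomposes into two assertions: (a) that $\rho_X$ lands in $\Aut^+(\widetilde\rH(X,\bZ))$, and (b) that every element of $\Aut^+(\widetilde\rH(X,\bZ))$ arises from some autoequivalence. I would attack these separately, as is standard.

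For the containment in (a), the plan is to recover Mukai's argument that the image respects the Mukai pairing and the Hodge structure. The Mukai pairing on $\widetilde\rH$ is engineered precisely so that, via Grothendieck--Riemann--Roch, the Euler characteristic $\chi(E,F)$ equals $-b(v(E),v(F))$ for $E,F \in \cD X$; since a Fourier--Mukai equivalence admits a two-sided adjoint given by (the transpose of) the dualised kernel, $\Phi^{\widetilde\rH}$ is automatically a $b$-isometry. Integrality is preserved because $\sqrt{\Td_X}$ has integer components in the natural basis for K3 surfaces, and the algebraicity of $v(\cP)$ shows that $\Phi^{\widetilde\rH}$ is a morphism of weight-zero Hodge structures.

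The orientation assertion in (a) is the substantive contribution of Huybrechts--Macr\`i--Stellari. My plan is to first check orientation preservation on a distinguished collection of autoequivalences whose cohomological action can be computed by hand: shifts, tensoring with line bundles, automorphisms of $X$, and spherical twists along $\cO_X$. I would then invoke Bridgeland's space of stability conditions $\mathrm{Stab}(\cD X)$: on the distinguished connected component, orientation preservation is an open and closed condition on the subgroup of $\Aut(\cD X)$ preserving that component, and this subgroup is generated by the explicitly checked autoequivalences. Propagating across the connected component then yields orientation preservation for the whole image of $\rho_X$.

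For (b), I would invoke Orlov's derived Torelli theorem: any Hodge isometry between Mukai lattices of two K3 surfaces is induced by a Fourier--Mukai equivalence. It then remains only to show that every element of $\Aut^+(\widetilde\rH(X,\bZ))$ genuinely occurs as such a Hodge isometry. This follows by combining the surjectivity portion of the classical global Torelli theorem for $\rH^2$ with the realisation of isometries that mix the $\rH^0$ and $\rH^4$ summands via spherical twists at $\cO_X$; the resulting subgroup is exactly of index two in $\Aut(\widetilde\rH(X,\bZ))$, matching the upper bound from (a).

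The main obstacle, I expect, is the orientation statement in (a): the check on generators only confirms orientation preservation once the image has already been constrained, and the propagation step depends on the (deep) topology of the space of stability conditions, which is specific to surfaces of K3 type.
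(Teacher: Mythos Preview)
The paper does not give its own proof of this theorem: it is stated with a \qed and attributed to the cited references, so there is nothing in the paper to compare against. Your proposal is a reasonable sketch of the standard literature approach, and part (a) up to orientation as well as part (b) are broadly correct outlines of the arguments in \cite{Mukai87,Orlov97,HLOY,Ploog05}.

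However, your plan for the orientation half of (a) has a genuine gap. You propose to check orientation preservation on an explicit list of autoequivalences and then claim that the subgroup of $\Aut(\cD X)$ preserving Bridgeland's distinguished component is \emph{generated} by those. That generation statement is not known for a general projective K3 surface; it is essentially Bridgeland's conjecture on the structure of $\Aut(\cD X)$, which remains open. The actual argument in \cite{HMS} proceeds differently, and the paper you are reading summarises it accurately in a later remark: one deforms the given autoequivalence to a \emph{generic} (formal or analytic) K3 of Picard rank~$0$, where the space of stability conditions is completely understood \cite{HMS08} and the group of autoequivalences can be computed explicitly. Orientation preservation is established there and then transported back. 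Your ``open and closed'' propagation idea does not have a clear meaning for the discrete group $\Aut(\cD X)$ acting on a fixed $X$; the deformation is what makes the argument work.

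A minor imprecision in (b): Orlov's derived Torelli theorem, as usually stated, produces from a Hodge isometry $\phi$ an equivalence $\Phi$, but not one with $\Phi^{\widetilde\rH}=\phi$. The refinement that every $\phi\in\Aut^+$ is actually realised is the content of \cite{HLOY,Ploog05}, and uses moduli of stable sheaves together with explicit autoequivalences to correct the discrepancy; your sketch gestures at this but conflates the two steps.
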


In this paper, we prove Orlov's conjecture on $\bQ$-Hodge structures for hyperk\"ahler varieties, construct a rational version of the Mukai lattice for hyperk\"ahler  varieties, and compute (up to index $2$) the image of $\rho_X$ for certain Hilbert squares of K3 surfaces. The main tool in these results is the Looijenga--Lunts--Verbitsky Lie algebra.

\subsection{The LLV Lie algebra and derived equivalences}

Let $X$ be a smooth projective complex variety. By the Hard Lefschetz theorem, every ample class $\lambda \in \NS(X)$ determines a Lie algebra $\fg_\lambda \subset \End(\rH(X,\bQ))$ isomorphic to $\fsl_2$. More generally, this holds for every cohomology class $\lambda \in \rH^2(X,\bQ)$ (algebraic or not) satisfying the conclusion of the Hard Lefschetz theorem. Looijenga and Lunts \cite{LooijengaLunts97}, and Verbitsky \cite{Verbitsky96} have studied the Lie algebra $\fg(X) \subset \End(\rH(X,\bQ))$ generated by the collection of the Lie algebras $\fg_\lambda$. We will refer to this as the LLV Lie algebra. See \S~\ref{sec:Looijenga--Lunts} for more details. 

We say that $X$ is \emph{holomorphic symplectic} if it admits a nowhere degenerate holomorphic symplectic form $\sigma \in \rH^0(X,\Omega^2_X)$.

\begin{bigthm}[\S~\ref{subsec:Lie-invariant}]\label{bigthm:Lie-invariant}Let $X_1$ and $X_2$ be holomorphic symplectic varieties. Then for every equivalence $\Phi\colon \cD X_1 \isomto \cD X_2$ there exists a canonical isomorphism of rational Lie algebras
\[
	\Phi^\fg\colon \fg(X_1) \isomto \fg(X_2)
\]
with the property that the map $\Phi^\rH \colon \rH(X_1,\bQ) \isomto \rH(X_2,\bQ)$ is equivariant with respect to $\Phi^\fg$.
 \end{bigthm}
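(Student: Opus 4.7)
\emph{Plan.} The equivariance condition forces $\Phi^\fg$ to be conjugation by $\Phi^\rH$, so the theorem reduces to the claim that the map $g \mapsto \Phi^\rH g (\Phi^\rH)^{-1}$ sends $\fg(X_1) \subset \End(\rH(X_1,\bQ))$ onto $\fg(X_2) \subset \End(\rH(X_2,\bQ))$. Applying the same to $\Phi^{-1}$ reduces this further to proving one inclusion, and since $\fg(X)$ is Lie-generated by the $\fsl_2$-triples $(L_\lambda, h_\lambda, \Lambda_\lambda)$ for Hard--Lefschetz classes $\lambda \in \rH^2(X,\bR)$, I only need to show that the conjugate of each such generator lies in $\fg(X_2)$.

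The natural first computation is via the Fourier--Mukai formula: multiplying on $X_1$ by $\lambda$ corresponds to multiplying the kernel $\cP$ by $p_1^*\lambda$, so $\Phi^\rH L_\lambda (\Phi^\rH)^{-1}$ is an explicit endomorphism of $\rH(X_2,\bQ)$ built from the Mukai-type kernels $v(\cP) \cdot p_1^*\lambda$ and $v(\cP)$. The obstacle is that $v(\cP)$ is inhomogeneous in the total grading of $\rH(X_1 \times X_2,\bQ)$, so the conjugated operator is a mixed-degree sum on $\rH(X_2,\bQ)$ and is \emph{not} literally cup product by any class $\mu \in \rH^2(X_2,\bQ)$.

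To get around this, I would use the holomorphic symplectic hypothesis in two ways. First, the symplectic isomorphism $T_X \cong \Omega^1_X$ together with the Căldăraru/HKR description of $v(\cP)$ imposes cancellations in the conjugated operator, controlling the mixed-degree error. Second, Verbitsky's theorem identifies $\fg(X)$ with $\fso(\widetilde\rH^2(X,\bQ))$ acting irreducibly on $\rH(X,\bQ)$ for irreducible hyperk\"ahler $X$, so $\fg(X_2)$ is rigid: any reductive Lie subalgebra of $\End(\rH(X_2,\bQ))$ that preserves the Mukai pairing (itself a derived invariant) and contains a single $\rH^2(X_2)$-Lefschetz triple should coincide with it. The plan is then a bootstrap: show the conjugate subalgebra preserves the Mukai pairing, identify one honest Lefschetz generator inside it, and use rigidity to conclude. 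The reducible case (products of hyperk\"ahler factors) should follow by a K\"unneth argument once the irreducible case is settled.

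The hardest step will be extracting that honest Lefschetz generator $L_\mu$, $\mu \in \rH^2(X_2,\bQ)$, from the inhomogeneous expression $\Phi^\rH L_\lambda (\Phi^\rH)^{-1}$: the graded components of the naive Fourier--Mukai formula live in the wrong degrees, and producing an $\rH^2(X_2)$-class matching $\lambda$ requires geometric input from the symplectic structure. One would likely need to construct the matching directly---presumably via an analysis of the top-weight piece of the action of the conjugate algebra on $\rH(X_2,\bQ)$---so as to avoid a circular use of the derived Hodge isomorphism $\widetilde\rH^2(X_1,\bQ) \cong \widetilde\rH^2(X_2,\bQ)$ that the paper advertises as an application of the theorem.
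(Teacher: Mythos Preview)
Your setup is right---the map has to be conjugation by $\Phi^\rH$, and the whole content is that this carries $\fg(X_1)$ onto $\fg(X_2)$---but the proposed route does not close. The ``rigidity'' step is not a theorem: there is no general statement that a semisimple subalgebra of $\End(\rH(X_2,\bQ))$ preserving the Mukai pairing and containing one Lefschetz $\fsl_2$ must equal $\fg(X_2)$, and invoking Verbitsky's $\fg(X)\cong\fso$ already restricts you to irreducible hyperk\"ahler $X$, whereas the theorem is stated for arbitrary holomorphic symplectic varieties (the K\"unneth reduction is not innocent, since the LLV algebra of a product is not the product of the LLV algebras). More seriously, the step you yourself flag as hardest---producing a single honest $e_\mu$ with $\mu\in\rH^2(X_2,\bQ)$ inside the conjugate algebra---is left as a hope. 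Analysing graded pieces of $\Phi^\rH e_\lambda (\Phi^\rH)^{-1}$ will not manufacture such a class, because $\Phi^\rH$ genuinely scrambles the cohomological grading and no amount of symplectic cancellation in the kernel makes the conjugate operator homogeneous of degree $2$.

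The paper avoids this entirely by changing the generating set rather than analysing the conjugates. It introduces a \emph{second} Lie algebra $\fg'(X)\subset\End(\rH(X,\bC))$, generated by $\fsl_2$-triples coming from the action of $\rHH^2(X)$ on $\rHH_\bullet(X)\cong\rH(X,\bC)$ with respect to the grading $h'=h_q-h_p$ (the Hodge, not cohomological, direction). Because Hochschild cohomology and its action on Hochschild homology are known derived invariants, $\fg'(X)$ is \emph{manifestly} a derived invariant---no analysis of $\Phi^\rH e_\lambda(\Phi^\rH)^{-1}$ is needed. The holomorphic symplectic hypothesis is then used, not to control Fourier--Mukai kernels, but to prove $\fg'(X)=\fg(X)\otimes_\bQ\bC$ as subalgebras of $\End(\rH(X,\bC))$: the symplectic form gives a ring isomorphism $\rHH^\bullet(X)\cong\rH^\bullet(X,\bC)$, and explicit commutator computations with the $\fsl_2$-triple $(\sigma,h_p,\check\sigma)$ (where $\check\sigma\in\rH^0(\wedge^2T_X)$ is the bivector dual to $\sigma$) show that every generator of $\fg'(X)$ already lies in $\fg(X)\otimes\bC$. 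Since $\Phi^\rH$ is defined over $\bQ$, the rational statement follows. The idea you are missing is precisely this Hochschild intermediary: find generators that are derived-invariant \emph{by construction}, then identify the algebra they generate with $\fg(X)$ after the fact.
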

  
Note that $\fg(X)$ is defined in terms of the grading and the cup product on $\rH(X,\bQ)$, neither of which are preserved under derived equivalences. 

To prove Theorem~\ref{bigthm:Lie-invariant} we introduce a complex Lie algebra $\fg'(X)$  whose definition is similar to the rational Lie algebra $\fg(X)$, but where the action of $\rH^2(X,\bQ)$ on $\rH(X,\bQ)$ is replaced with a natural action of the Hochschild cohomology group $\rHH^2(X)$ on Hochschild homology $\rHH_\bullet(X)$. Since Hochschild cohomology and its action on Hochschild homology is known to be invariant under derived equivalences, it follows that $\fg'(X)$ is a derived invariant. We  show that if $X$ is holomorphic symplectic, then the isomorphism $\rHH_\bullet(X) \to \rH(X,\bC)$ (coming from the Hochschild--Kostant--Rosenberg isomorphism) maps $\fg'(X)$ to $\fg(X)\otimes_\bQ \bC$. This is closely related to Verbitsky's `mirror symmetry' for hyperk\"ahler varieties \cite{Verbitsky96,Verbitsky99}. From this we  deduce that
the rational Lie algebra $\fg(X)$ is a derived invariant.

\subsection{A rational Mukai lattice for hyperk\"ahler varieties}
  
Let $X$ be a hyperk\"ahler variety.  Consider the $\bQ$-vector space
\[
 	\widetilde\rH(X,\bQ) := \bQ\alpha \oplus \rH^2(X,\bQ) \oplus \bQ\beta
\]
equipped with the bilinear form $b$ which is the orthogonal sum of the Beauville--Bogomolov form on $\rH^2(X,\bQ)$ and a hyperbolic plane $\bQ\alpha \oplus \bQ\beta$ with $\alpha$ and $\beta$ isotropic and $b(\alpha,\beta)=-1$. By analogy with the case of a K3 surface, we will call $\widetilde\rH(X,\bQ)$ the (rational) \emph{Mukai lattice} of $X$. Looijenga--Lunts \cite{LooijengaLunts97} and Verbitsky \cite{Verbitsky96} have shown that the Lie algebra $\fg(X)$ can be canonically identified with $\fso(\widetilde\rH(X,\bQ))$, see \S~\ref{sec:LL-so} for a precise statement.
Moreover, Verbitsky \cite{Verbitsky96} has shown that the sub-algebra $\SH(X,\bQ)$ of $\rH(X,\bQ)$ generated by $\rH^2(X,\bQ)$ forms an irreducible sub-$\fg(X)$-module.  Using this, we show that Theorem~\ref{bigthm:Lie-invariant} implies:

\begin{bigthm}[\S~\ref{subsec:verbitsky-component}]\label{bigthm:hyperkahler-subring}
Let $X_1$ and $X_2$ be hyperk\"ahler varieties and $\Phi\colon \cD X_1 \isomto \cD X_2$ an equivalence. Then the induced isomorphism $\Phi^\rH$ restricts  to an isomorphism
$\Phi^{\SH}\colon \SH(X_1,\bQ) \isomto \SH(X_2,\bQ).$
\end{bigthm}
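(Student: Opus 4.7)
My plan is to deduce the result from Theorem~\ref{bigthm:Lie-invariant}, Verbitsky's irreducibility theorem for $\SH(X,\bQ)\subset\rH(X,\bQ)$, and a simple multiplicity-one statement. By Theorem~\ref{bigthm:Lie-invariant}, the map $\Phi^\rH$ is equivariant for the $\fg(X_i)$-actions via $\Phi^\fg\colon\fg(X_1)\isomto\fg(X_2)$. Hence $W:=\Phi^\rH(\SH(X_1,\bQ))$ is a $\fg(X_2)$-submodule of $\rH(X_2,\bQ)$; by Verbitsky's theorem it is irreducible. The problem becomes identifying $W$ with $\SH(X_2,\bQ)$, for which I would establish two facts.

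The first fact is that $W$ and $\SH(X_2,\bQ)$ are isomorphic as $\fg(X_2)$-modules. Under the LLV identifications $\fg(X_i)\cong\fso(\widetilde\rH(X_i,\bQ))$ of \S\ref{sec:LL-so}, $\SH(X_i,\bQ)$ is the specific irreducible $\fso$-representation of a certain highest weight, determined by $n:=\dim X_i/2$ and by the rank $b_2(X_i)+2$ of the Mukai lattice. Both numerical invariants agree for $X_1$ and $X_2$: dimension is a derived invariant, and Theorem~\ref{bigthm:Lie-invariant} forces $\fg(X_1)\cong\fg(X_2)$ as $\bQ$-Lie algebras, whence $b_2(X_1)=b_2(X_2)$. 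The Verbitsky representation is a tensor (not spinor) representation whose highest weight lies along the first fundamental direction, which is fixed by the outer automorphism group of $\fso(\widetilde\rH)$; thus its isomorphism class is preserved under the abstract Lie algebra isomorphism $\Phi^\fg$, yielding $W\cong\SH(X_2,\bQ)$ as $\fg(X_2)$-modules. This compatibility step is the main obstacle, since it is a representation-theoretic fact about how $\Phi^\fg$ interacts with the Verbitsky highest weight; for the known deformation types of hyperk\"ahler varieties one can simply verify that $b_2+2\ne 8$ so the outer automorphism group is at most $\bZ/2$ and acts trivially on the relevant weight.

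The second fact is that $\SH(X_2,\bQ)$ appears in $\rH(X_2,\bQ)$ with multiplicity one. The LLV Cartan element $H_{X_2}\in\fg(X_2)$ acts on $\rH^j(X_2,\bQ)$ as $j-\dim X_2$, so its lowest eigenspace in $\rH(X_2,\bQ)$ equals the one-dimensional space $\rH^0(X_2,\bQ)$. Since $\SH(X_2,\bQ)$ contains $1\in\rH^0(X_2,\bQ)$ as its lowest $H_{X_2}$-weight vector, a second irreducible submodule abstractly isomorphic to $\SH(X_2,\bQ)$ would contribute another vector of this weight, violating the one-dimensionality. Combined with the first fact, this forces $W=\SH(X_2,\bQ)$ and completes the argument.
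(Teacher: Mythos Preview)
Your overall strategy---$\fg$-equivariance from Theorem~\ref{bigthm:Lie-invariant}, irreducibility, then a multiplicity-one argument---is sound, and your step~3 is correct. But the paper takes a different and shorter route that avoids your step~2 entirely: it characterizes $\SH(X,\bQ)$ intrinsically as the unique minimal $\fg(X)$-submodule of $\rH(X,\bQ)$ whose Hodge structure (for the weight-$0$ structure on $\bigoplus_n \rH^{2n}(X,\bQ(n))$) attains the maximal level $2d$. Since $\Phi^\rH$ is induced by an algebraic correspondence it is a morphism of these weight-$0$ Hodge structures, and combined with Theorem~\ref{bigthm:Lie-invariant} this forces $\Phi^\rH(\SH(X_1,\bQ))=\SH(X_2,\bQ)$ directly. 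No comparison of abstract highest weights, and no multiplicity argument, is needed.

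Your step~2 as written has a genuine gap. The assertion that the highest weight $d\omega_1$ of the Verbitsky representation is fixed by $\Out(\fso(\widetilde\rH))$ fails exactly when $b_2+2=8$: for $D_4$, triality permutes $\omega_1$ with the half-spin weights $\omega_3,\omega_4$, so an arbitrary Lie-algebra isomorphism $\Phi^\fg$ could a priori carry $\SH(X_1,\bQ)$ to a non-Verbitsky irreducible. You acknowledge this and restrict to the known deformation types, but the theorem is stated for all hyperk\"ahler varieties, so this does not suffice. The fix is to use something about $\Phi^\rH$ beyond the bare isomorphism $\Phi^\fg$: for instance, $\Phi^\rH$ preserves the Hochschild grading $h'=h_q-h_p$ (Theorem~\ref{thm:hochschild-invariance}), and the $h'$-weight $-2d$ space of $\rH(X_2,\bC)$ is the line $\rH^{2d,0}(X_2)\subset\SH(X_2,\bC)$. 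Hence $W\otimes\bC$ already meets $\SH(X_2,\bC)$, and irreducibility gives $W=\SH(X_2,\bQ)$ without any appeal to outer automorphisms. This is precisely the paper's Hodge-level characterization, rephrased via $h'$.
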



%
%
%
%
Taking $X_1=X_2=X$ in Theorem~\ref{bigthm:hyperkahler-subring} we obtain a homomorphism  
\[
	\rho_{X}^{\SH} \colon \Aut(\cD X) \longto \GL (\SH(X,\bQ)).
\]
The complex structure on a hyperk\"ahler variety $X$ induces a Hodge structure of weight~$0$ on $\widetilde\rH(X,\bQ)$ given by
\[
	\widetilde\rH(X,\bQ) = \bQ\alpha \oplus \rH^2(X,\bQ(1)) \oplus \bQ\beta.
\]
Denote by $\Aut \widetilde\rH(X,\bQ)$ the group of Hodge isometries of $\widetilde\rH(X,\bQ)$. 

\begin{bigthm}[\S~\ref{subsec:verbitsky-component}]\label{bigthm:normalizer}
Let $X$ be a hyperk\"ahler variety of dimension $2d$ and second Betti number $b_2$. Assume that 
$b_2$ is odd or $d$ is odd. Then $\rho_X^\SH$ factors over a map $\rho_X^{\widetilde\rH}\colon \Aut(\cD(X)) \to \Aut(\widetilde\rH(X,\bQ))$.
\end{bigthm}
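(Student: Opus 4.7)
The plan is, for each $\Phi \in \Aut(\cD X)$, to lift the Lie algebra automorphism $\Phi^\fg$ from Theorem~\ref{bigthm:Lie-invariant} through the conjugation map $\rO(\widetilde\rH(X,\bQ)) \to \Aut(\fso(\widetilde\rH(X,\bQ)))$, using the LLV identification $\fg(X) \cong \fso(\widetilde\rH(X,\bQ))$, and then to pin down the resulting sign ambiguity by exploiting Verbitsky's irreducibility of $\SH(X,\bQ)$ together with Theorem~\ref{bigthm:hyperkahler-subring}.

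I would first analyze the conjugation map, whose kernel is the center $\{\pm 1\}$ and whose surjectivity is guaranteed under the hypothesis: for $b_2$ odd we are in type $B$, which has no outer automorphisms; for $b_2$ even we are in type $D_{(b_2+2)/2}$, where away from the triality-exceptional case $D_4$ ($b_2 = 6$) outer automorphisms come from reflections in $\rO\setminus\SO$, while in the $D_4$ case the triality is broken by the distinguished subrepresentation $\rH^2(X,\bQ) \subset \widetilde\rH(X,\bQ)$ built into the LLV construction. Thus $\Phi^\fg$ admits a lift $g_\Phi \in \rO(\widetilde\rH(X,\bQ)) = \Aut(\widetilde\rH(X,\bQ))$, unique up to sign. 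By the irreducibility of $\SH(X,\bQ)$ as a $\fg(X)$-module and Schur's lemma, $\Phi^\SH$ and $g_\Phi|_{\SH(X,\bQ)}$ (both $\fso$-intertwiners for the same twisted action) agree up to a scalar $c_\Phi \in \bQ^\times$; the $\Phi^\rH$-invariance of the Mukai pairing, which restricts to an $\fso$-invariant form on $\SH(X,\bQ)$, forces $c_\Phi^2 = 1$.

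I would then use the hypothesis to eliminate the sign. If $d$ is odd, the two lifts $\pm g_\Phi$ act differently on $\SH(X,\bQ) \subset \Sym^d \widetilde\rH(X,\bQ)$, since $-I$ acts there by $(-1)^d = -1$, and I pick the sign with $c_\Phi = 1$. If $b_2$ is odd, the splitting $\rO(\widetilde\rH(X,\bQ)) = \SO(\widetilde\rH(X,\bQ)) \times \{\pm 1\}$ (valid since $\dim \widetilde\rH(X,\bQ)$ is odd) provides a canonical lift in $\SO(\widetilde\rH(X,\bQ))$; in this case the factorization map $\rO \to \GL(\SH)$ must be twisted by the determinant character of $\rO$, to account for natural elements such as the shift functor $[1] \in \Aut(\cD X)$, which acts on $\SH(X,\bQ)$ as $-\id$ while inducing the trivial automorphism of $\fso$. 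The resulting assignment $\Phi \mapsto g_\Phi$ is multiplicative, since $\Phi \mapsto \Phi^\fg$ is, and gives the required $\rho_X^{\widetilde\rH}$.

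The main obstacle I anticipate is making the normalization $c_\Phi = 1$ consistent as a homomorphism, particularly in the $b_2$ odd, $d$ even case where the lift cannot absorb the sign by itself; this requires carefully identifying the correct twist (by $\det$ on $\rO$) on the factorization map $\rO(\widetilde\rH(X,\bQ)) \to \GL(\SH(X,\bQ))$, and verifying the compatibility on natural generators of $\Aut(\cD X)$ such as shifts and tensor products with line bundles.
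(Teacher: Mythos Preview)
Your approach is essentially the paper's, which packages the same lifting-through-$\rO(\widetilde\rH)\to\Aut(\fso(\widetilde\rH))$ argument into a computation of the automorphism group of the triple $(S_{[d]}V,\,b_{[d]},\,\fso(V))$ (Proposition~\ref{prop:functor-aut} and Corollary~\ref{cor:odd-isom}), followed by the case split on the parity of $d$ (Theorems~\ref{thm:odd-mukai-functoriality} and~\ref{thm:even-mukai-functoriality}). The Schur/Mukai-pairing step and the $\rO=\SO\times\{\pm1\}$ splitting for odd $b_2$ are exactly what the paper does.

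Two points need repair. First, your $D_4$ argument is not right: $\rH^2(X,\bQ)$ is \emph{not} an $\fso$-subrepresentation of $\widetilde\rH(X,\bQ)$ (the standard representation is irreducible), so nothing ``breaks triality'' there. The correct argument is the one the paper gives: $\Phi^\fg$ is induced by conjugation by $\Phi^{\SH}$, hence must preserve the isomorphism class of the $\fso$-module $\SH(X,\bQ)\cong S_{[d]}\widetilde\rH(X,\bQ)$; the highest weight of $S_{[d]}V$ is fixed only by the $\bZ/2$ inside $S_3=\Out(\fso_8)$, so the triality automorphisms are excluded.

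Second, you conflate $\rO(\widetilde\rH(X,\bQ))$ with $\Aut(\widetilde\rH(X,\bQ))$; the latter denotes \emph{Hodge} isometries, and your argument only produces an isometry $g_\Phi$. You still need to show $g_\Phi$ respects the Hodge structure. The paper handles this (proof of Theorem~\ref{thm:odd-mukai-functoriality}) by observing that the Hodge structure on $\widetilde\rH(X,\bQ)$ is given by a cocharacter $h\colon\bC^\times\to\rO(\widetilde\rH(X,\bR))$ whose composite with $\rO(\widetilde\rH(X,\bR))\hookrightarrow\GL(\SH(X,\bR))$ gives the Hodge structure on $\SH(X,\bQ)$; since $\Phi^{\SH}$ respects the latter and the inclusion is injective, $g_\Phi$ respects the former.
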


See \S~\ref{subsec:regular-part} and \S~\ref{subsec:verbitsky-component} for an explicit description of the implicit map
$\Aut(\widetilde\rH(X,\bQ)) \to \GL(\SH(X,\bQ))$. 

Note that all known  hyperk\"ahler varieties satisfy the parity conditions in the theorem: there are two infinite series of deformation classes with odd $b_2$ (generalized Kummers and Hilbert schemes of points), and three exceptional deformation classes with odd $d$ (K3, OG6, OG10) .


%

\subsection{Hodge structures of derived equivalent hyperk\"ahler varieties}

Another application of Theorem~\ref{bigthm:Lie-invariant} is the following.

\begin{bigthm}[\S~\ref{sec:hodge}]\label{bigthm:hodge-structures} Let $X_1$ and $X_2$ be derived equivalent hyperk\"ahler varieties. Then for every $i$ the $\bQ$-Hodge structures $\rH^i(X_1,\bQ)$ and $\rH^i(X_2,\bQ)$ are isomorphic.
\end{bigthm}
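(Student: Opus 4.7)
The plan is to combine Theorem~\ref{bigthm:Lie-invariant} with the Verbitsky--LLV structural description of hyperk\"ahler cohomology as an $\fso(\widetilde\rH(X,\bQ))$-module. The guiding principle is that for a hyperk\"ahler $X$, the $\bQ$-Hodge structures on the graded pieces $\rH^i(X,\bQ)$ are functorially reconstructible from the $\bQ$-Hodge structure on the Mukai lattice $\widetilde\rH(X,\bQ)$ together with the $\fg(X)$-module structure on $\rH(X,\bQ)$, both of which will be derived invariants in the appropriate sense.

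Applying Theorem~\ref{bigthm:Lie-invariant}, we obtain $\Phi^\fg\colon \fg(X_1)\isomto \fg(X_2)$ with $\Phi^\rH$ being $\Phi^\fg$-equivariant. Since $\Phi^\rH$ is a Fourier--Mukai transform whose kernel has Mukai vector a sum of Hodge $(p,p)$ classes, it is compatible with the Mukai-shifted Hodge structure on total cohomology; consequently $\Phi^\fg$ respects the induced $\bQ$-Hodge structures on $\fg(X_j)\subset \End(\rH(X_j))$. Using the identification $\fg(X)\cong \fso(\widetilde\rH(X,\bQ))$ of \S\ref{sec:LL-so} together with functorial reconstruction of the defining representation from the Lie algebra (up to the outer automorphism handled as in \S\ref{subsec:verbitsky-component}), this descends to a $\bQ$-Hodge isometry $\widetilde\rH(X_1,\bQ)\isomto \widetilde\rH(X_2,\bQ)$.

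Next, decompose $\rH(X_j,\bQ)$ into $\fg(X_j)$-isotypic components $V_\lambda^{\oplus m_\lambda}$. Each irreducible $V_\lambda$ occurring in hyperk\"ahler cohomology arises as a Schur-type functor applied to $\widetilde\rH(X_j,\bQ)$, so its $\bQ$-Hodge structure is functorially determined by that on $\widetilde\rH$. Combined with the Hodge isometry above, this gives $\bQ$-Hodge isomorphisms $V_\lambda(X_1)\cong V_\lambda(X_2)$ for every $\lambda$. Since each $\rH^i(X_j,\bQ)$ is the eigenspace of the rational grading element $h_{X_j}\in \fg(X_j)$ with eigenvalue $i-\dim X_j$, and the two grading elements (after identification via $\Phi^\fg$) are rational semisimple elements of $\fso(\widetilde\rH(X_2,\bQ))$ with identical spectra, they are conjugate by a $\bQ$-rational element of $\SO(\widetilde\rH(X_2,\bQ))$, which transports the eigenspace decomposition on each Schur factor.

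The main obstacle is arranging this rational conjugation to be simultaneously compatible with the Hodge structures on the Schur factors. This is where Verbitsky's mirror-symmetric description of the LLV algebra is essential: it ensures that the grading element and the Hodge-theoretic data lie in compatible rational sub-structures of $\fg(X)$ that can be aligned by Hodge-preserving $\bQ$-isometries of $\widetilde\rH$, yielding the desired $\bQ$-Hodge isomorphism $\rH^i(X_1,\bQ)\cong \rH^i(X_2,\bQ)$.
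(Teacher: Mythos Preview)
Your overall strategy matches the paper's: transport the grading element $h_1$ to $\fg(X_2)$ via $\Phi^\fg$, then conjugate $\Phi^\fg(h_1)$ to $h_2$ by a Hodge-preserving rational element, and compose with $\Phi^\rH$. However, there is a genuine gap in the final step, and the last paragraph does not close it.

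The problem is that an element of $\SO(\widetilde\rH(X_2,\bQ))$ does \emph{not} act on $\rH(X_2,\bQ)$. The $\fso$-action on $\rH(X,\bQ)$ integrates only to $\bSpin$ (indeed $-1\in\Spin$ acts as $(-1)^i$ on $\rH^i$, so the action does not factor through $\SO$). Your claim that every irreducible constituent is a ``Schur-type functor'' of $\widetilde\rH$ would force the action to factor through $\SO$, but this is unjustified and in general false; half-spin constituents can occur. So even if you produce a Hodge isometry $\psi\in\SO(\widetilde\rH(X_2,\bQ))$ with $\ad(\psi)\big(\Phi^\fg(h_1)\big)=h_2$, you cannot let $\psi$ act on $\rH(X_2,\bQ)$ without first lifting it to $\Spin$, and over $\bQ$ that lift is obstructed by the spinor norm.

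The paper's resolution is precisely this missing step. One lifts $\psi$ not to $\Spin$ but to $\GSpin$: the exact sequence $1\to\bG_m\to\bGSpin\to\bSO\to 1$ has vanishing obstruction by Hilbert~90, so $\psi$ lifts to $\widetilde\psi\in\GSpin(\widetilde\rH(X_2,\bQ))$, and $\bGSpin$ does act on $\rH(X,\bQ)$ (with the $\bG_m$-factor acting by the shifted grading). The construction of the Hodge-preserving $\psi$ is also made concrete: the Hodge similitude $\varphi$ carries $\alpha_1,\beta_1$ into the algebraic part $N$ of $\widetilde\rH(X_2,\bQ)$, so Witt cancellation inside $N$ produces $\psi_N\in\SO(N)$ sending $\varphi(\alpha_1),\varphi(\beta_1)$ to scalar multiples of $\alpha_2,\beta_2$; extending by the identity on the transcendental part gives a Hodge isometry $\psi$ for which $\psi\varphi$ is graded. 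Your appeal to ``identical spectra implies $\bQ$-conjugacy in $\SO$'' and to ``Verbitsky's mirror-symmetric description'' does not supply either of these two ingredients.
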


This confirms Orlov's conjecture for hyperk\"ahler varieties. The proof is inspired by \cite{Soldatenkov}.

\subsection{Auto-equivalences of the Hilbert square of a K3 surface}
In the second half of the paper we consider the problem of determining the image of $\rho_X$ for certain hyperk\"ahler varieties. An important difference with the first half of the paper is that \emph{integral} structures (lattices, arithmetic subgroups, \ldots) will play an important role here.

As a first approximation to determining the image of $\rho_X$, we consider a variation of this problem which is deformation invariant.
Let $X$ be a smooth projective complex variety. If $X'$ and $X''$ are smooth deformations of  $X$ (parametrized by paths in the base), and if $\Phi \colon \cD X' \isomto \cD X''$ is an equivalence, then we obtain an isomorphism as the composition
\[
	\rH(X,\bQ) \longto \rH(X',\bQ) \overset{\Phi^\rH}{\longto} \rH(X'',\bQ) \longto \rH(X,\bQ).
\]
We define the \emph{derived monodromy group} of $X$ to be the subgroup $\DMon(X)$ of $\GL(\rH(X,\bQ))$ generated by all these isomorphisms. This group contains both the usual monodromy group of $X$ and the image of $\rho_X\colon \Aut(\cD X)\to\GL(\rH(X,\bQ))$.  

If $S$ is a K3 surface, then the result of \cite{HMS} implies $\DMon(S)=\rO^+(\widetilde\rH(S,\bZ))$, and that the image of $\rho_S$ consists of those elements of $\DMon(S)$ that respect the Hodge structure on $\widetilde\rH(S,\bZ)$. Similarly, for an abelian variety $A$, the results of \cite{GolyshevLuntsOrlov01} imply  $\DMon(A)=\Spin(\rH^{1}(A,\bZ)\oplus \rH^{1}(A^\vee,\bZ))$, and that the image of $\rho_A$ consists of those elements of $\DMon(A)$ that respect the Hodge structure on $\rH^{1}(A,\bZ)\oplus \rH^{1}(A^\vee,\bZ)$.

Now let $X$ be a hyperk\"ahler variety of type ${\mathrm{K3}}^{[2]}$. We have $\rH(X,\bQ)=\SH(X,\bQ)$ and hence by Theorem~\ref{bigthm:normalizer} the action of $\Aut(\cD X)$ on $\rH(X,\bQ)$ factors over a subgroup $\rO(\widetilde\rH(X,\bQ))$
of $\GL(\rH(X,\bQ))$.

\begin{bigthm}[\S~\ref{subsec:lower-bound}]\label{bigthm:derived-monodromy}Let $X$ be a hyperk\"ahler variety deformation equivalent to the Hilbert square of a K3 surface. 
There is an integral lattice $\Lambda \subset \widetilde\rH(X,\bQ)$ such that
\[
 	\rO^+(\Lambda) \subset \DMon(X) \subset \rO(\Lambda)
\]
inside $\rO(\widetilde\rH(X,\bQ))$.
\end{bigthm}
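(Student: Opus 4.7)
The plan is to fix a K3 surface $S$ and take $X_0 := S^{[2]}$ as a distinguished representative of the deformation class of $X$; since $\DMon$ only depends on the deformation class, it suffices to exhibit $\Lambda$ and both bounds for $X_0$ and then transport along a chosen path from $X_0$ to $X$. For the lattice I would take
\[
    \Lambda := \bZ\alpha \oplus \rH^2(X_0,\bZ) \oplus \bZ\beta,
\]
with the integral Beauville--Bogomolov form on the middle summand and $\alpha,\beta$ spanning a hyperbolic plane. Any commensurable lattice will serve equally well, since finite-index adjustments can be absorbed into both bounds.

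The upper bound $\DMon(X) \subset \rO(\Lambda)$ should be essentially formal. Parallel transport in smooth families acts integrally on $\rH^2(\cdot,\bZ)$ and fixes the topological classes $\alpha$ and $\beta$. For a derived equivalence $\Phi\colon \cD Y_1 \isomto \cD Y_2$ between deformations of $X$, the Mukai vector $v(\cP)$ of the kernel acts integrally on topological $K$-theory; combined with the factorisation of $\Phi^\rH$ through $\widetilde\rH(\cdot,\bQ)$ furnished by Theorem~\ref{bigthm:normalizer}, this forces $\Phi^{\widetilde\rH}$ to preserve $\Lambda$ up to a uniformly bounded denominator that is absorbed into the choice of lattice.

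The heart of the argument is the lower bound $\rO^+(\Lambda) \subset \DMon(X)$, which I would obtain by combining two families of generators. First, by Markman's description of the monodromy of K3$^{[2]}$-type hyperk\"ahlers, parallel transport realises a finite-index subgroup of $\rO^+(\rH^2(X_0,\bZ))$ inside $\DMon(X)$, acting trivially on the hyperbolic plane $\bZ\alpha \oplus \bZ\beta$. Second, autoequivalences of $\cD S$---the shift, tensoring by line bundles, and spherical twists around $(-2)$-classes---lift to autoequivalences of $\cD(X_0)$ via the Ploog-type construction \cite{Ploog05}, and their action on $\widetilde\rH(X_0,\bQ)$ can be computed from the Mukai-lattice action on $\widetilde\rH(S,\bQ)$. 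Since $\Aut(\cD S)$ together with the K3 monodromy group generates $\rO^+(\widetilde\rH(S,\bZ))$ by \cite{HMS}, the lifted images furnish isometries of $\Lambda$ that non-trivially mix $\alpha$, $\beta$ with classes in $\rH^2(S,\bZ)\subset \rH^2(X_0,\bZ)$, in particular realising reflections in $(-2)$-vectors crossing the hyperbolic plane. Combining these with the Markman generators and invoking an Eichler-type transitivity theorem for $\rO^+$ of a lattice containing two orthogonal hyperbolic planes should then yield all of $\rO^+(\Lambda)$.

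The main obstacle is the verification of the second step: one must compute the Ploog lift on $\widetilde\rH$ explicitly enough to show that the generated subgroup really is \emph{all} of $\rO^+(\Lambda)$ and not a proper subgroup, and simultaneously to pin down a single $\Lambda$ (possibly after a bounded adjustment) that is preserved by every generator in both families. This will require careful bookkeeping with the Chern characters of the universal ideal sheaf and the exceptional divisor class $\delta$, and I expect the technical difficulty of the proof to concentrate here.
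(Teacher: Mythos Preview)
Your strategy has the right ingredients (derived McKay correspondence, Eichler transitivity, integrality via topological $K$-theory), but there are two genuine gaps.

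First, the lattice is wrong. The paper takes
\[
\Lambda := B_{-\delta/2}\big(\bZ\alpha \oplus \rH^2(X,\bZ) \oplus \bZ\beta\big),
\]
and explicitly warns that this is \emph{not} equal to your naive $\bZ\alpha \oplus \rH^2(X,\bZ) \oplus \bZ\beta$ as a sublattice of $\widetilde\rH(X,\bQ)$. Your remark that ``any commensurable lattice will serve equally well'' is false: the bounds $\rO^+(\Lambda) \subset \DMon(X) \subset \rO(\Lambda)$ leave only an index-$2$ gap, and passing to a commensurable lattice changes $\rO^+$ and $\rO$ by much more than index $2$. The twist by $B_{-\delta/2}$ is forced by the very computation you anticipate in your last paragraph: the paper shows (Theorem~\ref{thm:BKR-action}) that the map $\DMon(S)\to\DMon(X)$ induced by the McKay correspondence sends $g\in\rO(\widetilde\rH(S,\bQ))$ to $\det(g)\cdot B_{\delta/2}\circ\iota(g)\circ B_{-\delta/2}$, so the image of $\DMon(S)$ does not preserve your naive lattice. (Markman's monodromy, incidentally, is not invoked; the paper works entirely with the operators $B_\lambda$ for $\lambda\in\rH^2(X,\bZ)$ and the McKay lift of $\DMon(S)$.)

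Second, the upper bound is not formal and cannot be proved before the lower bound. Integrality on $\rK_\top(X)$ gives an integral structure on $\rH(X,\bQ)$, but the passage from $\GL(\rH(X,\bQ))$ to $\rO(\widetilde\rH(X,\bQ))$ via Theorem~\ref{bigthm:normalizer} is representation-theoretic and there is no direct way to read off which lattice in $\widetilde\rH(X,\bQ)$ is preserved; your phrase ``uniformly bounded denominator absorbed into the choice of lattice'' hides the entire difficulty. The paper runs the argument in the opposite order: first establish the lower bound $\rO^+(\Lambda)\subset\DMon(X)$ for the twisted $\Lambda$ (via the explicit McKay computation and Eichler transitivity), then observe that $\DMon(X)$ is arithmetic because it preserves $\rK_\top(X)\subset\rH(X,\bQ)$, and finally prove the purely group-theoretic fact that $\SO(\Lambda)$ is the \emph{unique} maximal arithmetic subgroup of $\SO(\widetilde\rH(X,\bQ))$ containing $\SO^+(\Lambda)$, which forces $\DMon(X)\subset\rO(\Lambda)$.
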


See \S~\ref{subsec:lower-bound} for a precise description of $\Lambda$.  As an abstract lattice, $\Lambda$ is isomorphic to $\rH^2(X,\bZ) \oplus U$, but its image in  $\widetilde\rH(X,\bQ)$ is \emph{not} $\bZ\alpha \oplus \rH^2(X,\bZ) \oplus \bZ \beta$.

Crucial in the proof of Theorem~\ref{bigthm:derived-monodromy} is the \emph{derived McKay correspondence} due to Bridgeland, King, Reid \cite{BKR} and Haiman \cite{Haiman}. It provides an ample supply of elements of $\DMon(X)$: every deformation of $X$ to the Hilbert square $S^{[2]}$ of a K3 surface $S$ induces an inclusion $\DMon(S)\to \DMon(X)$. As part of the proof, we explicitly compute this inclusion.
  

We denote by $\Aut(\Lambda)$ the group of isometries of $\Lambda \subset \widetilde\rH(X,\bQ)$ that respect the Hodge structure on $\widetilde\rH(X,\bQ)$. It follows from Theorem \ref{bigthm:derived-monodromy} that $\im(\rho_X)$ is contained in $\Aut(\Lambda)$ for every $X$ which is deformation equivalent to  the Hilbert square of a K3 surface.
For some $X$ we can show that the upper bound in the above corollary is close to being sharp. Denote by $\Aut^+(\Lambda) \subset \Aut(\Lambda)$ the subgroup consisting of those Hodge isometries that respect the orientation of a positive $4$-plane in $\Lambda_\bR$.

\begin{bigthm}[\S~\ref{subsec:the-image}]\label{bigthm:lower-bound}
Let $S$ be a complex K3 surface and $X=S^{[2]}$. Assume that $\NS(X)$ contains a hyperbolic plane. Then we have $\Aut^+(\Lambda) \subset \im(\rho_X) \subset \Aut(\Lambda)$.
\end{bigthm}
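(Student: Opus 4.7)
The plan is to pair Theorem~\ref{bigthm:derived-monodromy} with the derived McKay correspondence, and to supplement this with the additional auto-equivalences of $\cD X$ made available by the hyperbolic plane hypothesis.

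For the upper bound $\im(\rho_X) \subset \Aut(\Lambda)$: Theorem~\ref{bigthm:derived-monodromy} already gives $\im(\rho_X) \subset \DMon(X) \subset \rO(\Lambda)$, and Theorem~\ref{bigthm:normalizer} implies that auto-equivalences act by Hodge isometries on $\widetilde\rH(X,\bQ)$. Hence $\im(\rho_X) \subset \Aut(\Lambda)$.

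For the lower bound $\Aut^+(\Lambda) \subset \im(\rho_X)$, I would combine two sources of auto-equivalences of $\cD X$. First, the derived McKay correspondence of Bridgeland--King--Reid and Haiman provides an equivalence $\cD X \simeq \cD^{\fS_2}(S \times S)$, hence a homomorphism $\Aut(\cD S) \to \Aut(\cD X)$. Composing with Mukai's theorem $\im(\rho_S) = \Aut^+(\widetilde\rH(S,\bZ))$ yields a canonical embedding
\[
\Aut^+(\widetilde\rH(S,\bZ)) \injto \im(\rho_X),
\]
whose image sits inside $\Aut^+(\Lambda)$ through the natural inclusion $\widetilde\rH(S,\bZ) \injto \Lambda$ built into the proof of Theorem~\ref{bigthm:derived-monodromy}. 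Second, the hyperbolic plane hypothesis on $\NS(X)$ provides primitive $(1,1)$-isotropic classes $v \in \Lambda$ distinct from the canonical isotropic class attached to the Hilbert-scheme structure $X = S^{[2]}$; by the Bayer--Macrì theory of moduli of Bridgeland-stable objects on K3 surfaces and its derived-equivalence consequences, each such $v$ identifies $X$ with a moduli space on some other K3 surface $S_v$, producing further inclusions $\Aut^+(\widetilde\rH(S_v,\bZ)) \injto \im(\rho_X)$ with image landing in a different primitive sublattice of $\Lambda$.

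The main obstacle is the final group-theoretic step: verifying that, as $v$ ranges over the $(1,1)$-isotropic classes of the hyperbolic plane, the subgroups so obtained generate all of $\Aut^+(\Lambda)$. I would attack this via Eichler-type theorems on the generation of $\rO^+$ of indefinite lattices by isometries stabilizing an isotropic vector, applied to the two isotropic classes of the hyperbolic plane, using that each of the subgroups above contains the full $\Aut^+$-stabilizer of the corresponding isotropic class. The hyperbolic plane hypothesis is essential here: it is precisely what guarantees the existence of enough $(1,1)$-isotropic classes to connect these stabilizer subgroups and force the subgroup they generate to exhaust $\Aut^+(\Lambda)$.
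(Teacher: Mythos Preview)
Your upper bound is fine and matches the paper.

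For the lower bound, your approach diverges substantially from the paper's and has a real gap. The invocation of Bayer--Macr\`i to produce, for each primitive $(1,1)$-isotropic $v\in\Lambda$, a new K3 surface $S_v$ together with a derived equivalence realising $X$ as a moduli space on $S_v$, is not justified: that theory classifies moduli of stable objects on a \emph{fixed} K3, it does not read off new K3 interpretations of $X$ from isotropic classes in the extended lattice $\Lambda$ of $X$. (In particular, an isotropic Mukai vector on a K3 yields a two-dimensional moduli space, not $X$.) Even granting such realisations, you would still need to know exactly how the resulting copies of $\Aut^+(\widetilde\rH(S_v,\bZ))$ embed in $\rO(\Lambda)$ before any Eichler-type generation argument can be run, and you have not supplied this.

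The paper's argument is simpler and avoids these issues. It never leaves the single K3 surface $S$. The missing ingredient in your proposal is the most elementary one: line bundle twists $(-)\otimes\cL$ on $X$ itself, which act as $B_\lambda$ for $\lambda\in\NS(X)$. Together with one further element $\gamma_X$ (the McKay transport of the spherical twist in $\cO_S$ composed with $[1]$, which swaps $B_{-\delta/2}\alpha$ and $B_{-\delta/2}\beta$), these generate a group $G\subset\im(\rho_X)$ acting on the algebraic part $\Lambda_{\mathrm{alg}}=B_{-\delta/2}(\bZ\alpha\oplus\NS(X)\oplus\bZ\beta)$. The hyperbolic plane hypothesis is used \emph{here}, inside $\NS(X)$: the Eichler transitivity lemma (Lemma~\ref{lemma:transitivity}) then shows $G$ acts transitively enough that any $g\in\Aut^+(\Lambda)$ can be corrected by some $g'\in G$ so that $g'g$ fixes $B_{-\delta/2}\delta$. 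The stabiliser of this vector is exactly $B_{-\delta/2}\widetilde\rH(S,\bZ)$, and on that sublattice $g'g$ lies in the image of $\Aut(\cD S)$ via McKay by the K3 result. So a single Eichler step plus line bundle twists reduces everything to the one subgroup you already identified; no second K3 is needed.
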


\begin{remark}To determine $\im \rho_X$ up to index $2$ for a general hyperk\"ahler of type $\mathrm{K3}^{[2]}$ new constructions of derived equivalences will be needed.
\end{remark}

\begin{remark}
Theorem~\ref{bigthm:derived-monodromy} and Theorem~\ref{bigthm:lower-bound} leave an ambiguity of index $2$, related to orientations on a maximal positive subspace of $\widetilde\rH(X,\bR)$. In the case of K3 surfaces, it was conjectured by Szendr\H{o}i \cite{Szendroi} that derived equivalences must respect such orientation, and this was proven by Huybrechts, Macr\`i, and Stellari \cite{HMS}. Their method is based on deformation to generic (formal or analytic) K3 surfaces of Picard rank~$0$, and on a complete understanding of the space of stability conditions on those \cite{HMS08}. It is far from clear if such a strategy can be used to remove the index~$2$ ambiguity for hyperk\"ahler varieties of type $\mathrm{K3}^{[2]}$.
\end{remark}

\begin{remark}
That a lattice of signature $(4,b_2-2)$ should play a role in describing the image of $\rho_X$ for hyperk\"ahler varieties $X$ was expected from the physics literature \cite{Dijkgraaf99}, but it is not clear where the lattice should come from, nor what its precise description should be for general hyperk\"ahler varieties. In the above results, the lattice $\Lambda$ arises in a rather implicit way, and one may hope for a more concrete interpretation of its elements.
\end{remark}

\begin{remark}
It is tempting to try to conjecture a description of the  group $\Aut(\cD X)$ in terms of an action on a space of stability conditions on~$X$, generalising Bridgeland's work on K3 surfaces \cite{Bridgeland}. However, there is  a representation-theoretic obstruction against doing this naively.  The central charge of a hypothetical stability condition on $X$ takes values in $\rH(X,\bC)$, yet Theorems~\ref{bigthm:derived-monodromy} and \ref{bigthm:lower-bound} suggest the central charge should take values in $\widetilde\rH(X,\bC)$. If $X$ is of type $\mathrm{K3}^{[2]}$, then $\rH(X,\bC)$ and $\widetilde\rH(X,\bC)$ are non-isomorphic irreducible $\DMon(X)$-modules, so that this would require a modification of the notion of stability condition. 
\end{remark}

%

\subsection{Acknowledgements}
I am grateful to Nick Addington, Thorsten Beckmann, Eyal Markman, and Zo\"e Schroot for many valuable comments on earlier drafts of this paper.

\section{The LLV Lie algebra of a smooth projective variety}

In this section we recall the construction of Looijenga and Lunts \cite{LooijengaLunts97} and Verbitsky \cite{Verbitsky96} of a Lie algebra acting naturally on the cohomology of algebraic varieties. For holomorphic sympletic varieties we show that this Lie algebra is a derived invariant.

\subsection{The LLV Lie algebra}
\label{sec:Looijenga--Lunts}

Let $F$ be a field of characteristic zero and let $M$ be a $\bZ$-graded $F$-vector space, of finite $F$-dimension. Denote by $h$ the endomorphism of $M$ that is multiplication by $n$ on $M_n$. 

 Let $e$ be an endomorphism of $M$ of degree $2$. We say that $e$ \emph{has the hard Lefschetz property} if for every $n\geq 0$ the map $e^n \colon M_{-n} \to M_n$ is an isomorphism. This is equivalent to the existence of an $f\in \End(M)$ such that the relations
 \begin{equation}\label{eq:sl2-triple}
 	[h,e]=2e, \quad [h,f]=-2f, \quad [e,f]=h
\end{equation}
hold in $\End(M)$. Thus, $(e,h,f)$ forms an $\fsl_2$-tripe and defines a Lie homomorphism $\fsl_2 \to \End(M)$.

\begin{proposition}\label{prop:JM} Assume that $e$ has the hard Lefschetz property. Then
the element $f$ satisfying (\ref{eq:sl2-triple}) is unique, and if $e$ and $h$ lie in a semi-simple sub-Lie algebra $\fg \subset \End(M)$, then so does $f$.
\end{proposition}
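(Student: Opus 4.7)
The plan is to prove the uniqueness assertion first, and then combine it with a short existence argument inside $\fg$ based on the invariant trace form on $\End(M)$.

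For uniqueness I will take two elements $f, f'$ satisfying the relations (\ref{eq:sl2-triple}) and study their difference $\phi := f - f'$. Subtracting the defining relations shows that $\phi$ has degree $-2$ (that is, $[h, \phi] = -2\phi$) and commutes with $e$. Using either of the $\fsl_2$-triples $(e, h, f)$ or $(e, h, f')$, I decompose $\End(M)$ under the adjoint $\fsl_2$-action into irreducible submodules; in each irreducible summand $V_n$ the kernel of $\ad(e)$ is the highest-weight line, of $\ad(h)$-weight $+n \geq 0$. Hence no nonzero element of $\ad(h)$-weight $-2$ lies in $\ker(\ad(e))$, and so $\phi = 0$.

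For the containment $f \in \fg$, I equip $\End(M)$ with the trace form $\tau(x, y) := \tr(xy)$. It is $\fgl(M)$-invariant, and a classical Cartan-criterion argument applied to the faithful representation of the semisimple Lie algebra $\fg \subset \End(M)$ shows that its restriction to $\fg$ is non-degenerate. The decomposition $\fg = \bigoplus_k \fg_k$ by the eigenvalues of $\ad(h)$ is automatically $\tau$-orthogonal for $k + \ell \neq 0$, and $\tau$ restricts to a perfect pairing between $\fg_k$ and $\fg_{-k}$. Moving $[e, -]$ across $\tau$ then identifies the image of the map $\ad(e)\colon \fg_{-2} \to \fg_0$ with the $\tau$-orthogonal complement, inside $\fg_0$, of the $e$-centralizer $\ker(\ad(e)|_{\fg_0})$.

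The crux of the proof is then a one-line computation showing that $h$ lies in this orthogonal complement: for any $z \in \fg_0$ with $[e, z] = 0$, the identity $h = [e, f]$ in $\End(M)$ gives
\[
  \tau(h, z) = \tr([e, f]\, z) = \tr(f\,[z, e]) = 0
\]
by cyclicity of the trace. Hence $h$ lies in the image of $\ad(e)|_{\fg_{-2}}$, and I can pick $f' \in \fg_{-2}$ with $[e, f'] = h$. Since $f' \in \fg_{-2}$ automatically satisfies $[h, f'] = -2 f'$, the triple $(e, h, f')$ is an $\fsl_2$-triple in $\fg \subset \End(M)$, and the uniqueness already proved forces $f' = f$, so $f \in \fg$. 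I do not anticipate a serious obstacle: the only step that needs a bit of bookkeeping is the setup of the trace form (its non-degeneracy on $\fg$ and its orthogonal behaviour with respect to the $\ad(h)$-grading), both of which are standard consequences of semi-simplicity and the faithfulness of $M$.
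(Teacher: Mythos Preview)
Your proof is correct. The uniqueness argument is essentially the same as the paper's, just phrased in dual language: you use that $\ker(\ad e)$ in $\End(M)$ has only non-negative $\ad(h)$-weights, while the paper uses that $(\ad e)^2\colon \End(M)_{-2}\to \End(M)_2$ is an isomorphism sending $f\mapsto -2e$; both are immediate consequences of the $\fsl_2$-module structure on $\End(M)$.

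For the containment $f\in\fg$ the two arguments diverge. The paper argues structurally: since $h$ lies in a Cartan subalgebra of $\fg$, root-system symmetry gives $\dim\fg_{-2}=\dim\fg_2$, and the injection $(\ad e)^2\colon \fg_{-2}\hookrightarrow\fg_2$ (restricted from the isomorphism on $\End(M)$) is therefore an isomorphism; the preimage of $-2e\in\fg_2$ must then be $f$. You instead use the trace form of the faithful representation $M$: its non-degeneracy on the semisimple $\fg$ (via Cartan's criterion) and its compatibility with the $\ad(h)$-grading let you identify $\im(\ad e\colon\fg_{-2}\to\fg_0)$ with $(\ker\ad e|_{\fg_0})^\perp$, and the cyclicity computation $\tau(h,z)=\tr([e,f]z)=\tr(f[z,e])=0$ puts $h$ in that image. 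Your route avoids Cartan subalgebras and root systems entirely, at the cost of setting up the bilinear-form machinery; the paper's route is shorter once one is willing to invoke the symmetry of the weight decomposition. Both are standard and of comparable depth.
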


\begin{proof}[Proof of Proposition \ref{prop:JM}]The action of $\ad e$ on $\End(M)$ has the hard Lefschetz property for the grading defined by $\ad h$. In particular
\[
	(\ad e)^2 \colon \End(M)_{-2} \isomto \End(M)_2
\]
is an isomorphism. It sends $f$ to $-2e$, so that $f$ is indeed uniquely determined. 

If $e$ and $h$ lie in $\fg$, then $\fg \subset \End(M)$ is graded and the above map restricts to an injective map
\[
	(\ad e)^2 \colon \fg_{-2} \injto \fg_2.
\]
Since $h$ is diagonisable, it is contained in a Cartan sub-algebra of $\fg$. The symmetry of the resulting root system implies that $\dim \fg_{-n} = \dim \fg_n$ for all $n$. In particular, the map $(\ad e)^2$ defines an isomorphism between $\fg_{-2}$ and $\fg_2$, and we conclude that $f$ lies in $\fg$.
\end{proof}

Let $\fa$ be an abelian Lie algebra and $e\colon \fa \to \fgl(M),\, a \mapsto e_a$ a Lie homomorphism. We say that $e$  \emph{has the hard Lefschetz property} if $e(\fa) \subset \fgl(M)_2$ and if there exists some $a\in \fa$ so that $e_a$ has the hard Lefschetz property. Note that this is a Zariski open condition on $a\in\fa$. 

If $e\colon \fa \to \fgl(M)$ has the hard Lefschetz property, then we denote by $\fg(\fa,M)$ the Lie algebra generated by the $\fsl_2$-triples $(e_a,h,f_a)$ for $a\in \fa$ such that $e_a$ has the hard Lefschetz property.  We say that $(\fa,M)$ is a \emph{Lefschetz module} if $\fg(\fa,M)$ is semisimple. 
 
Now let $X$ be a smooth projective complex variety of dimension $d$. Denote by $M := \rH(X,\bQ)[d]$ the shifted total cohomology of $X$ (with middle cohomology in degree $0$). For a class $\lambda \in \rH^2(X,\bQ)$ consider the endomorphism $e_\lambda \in \End(M)$ given by cup product with $\lambda$. If $\lambda$ is ample, then $e_\lambda$ has the hard Lefschetz property, so that the map $e\colon \rH^2(X,\bQ) \to \fgl(M)$ has the hard Lefschetz property. We denote the corresponding Lie algebra by $\fg(X) := \fg(\rH^2(X,\bQ), M)$. 

\begin{proposition}[{\cite[1.6, 1.9]{LooijengaLunts97}}] $(\rH^2(X,\bQ),M)$ is a Lefschetz module.\qed
\end{proposition}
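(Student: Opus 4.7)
The plan is to show that $\fg(X)$ is \emph{perfect} (equal to its commutator) and \emph{reductive}; these two properties together imply semi-simplicity. For perfectness, I would observe that $\fg(X)$ is generated by the simple subalgebras $\fg_\lambda \cong \fsl_2$, each of which satisfies $[\fg_\lambda,\fg_\lambda]=\fg_\lambda$. Hence $[\fg(X),\fg(X)]$ contains every $\fg_\lambda$, and therefore equals $\fg(X)$. A reductive Lie algebra that is perfect has no nontrivial abelian quotient and hence trivial center, so it is semi-simple; the task therefore reduces to proving reductivity of $\fg(X)\otimes_\bQ \bR \subset \End(\rH(X,\bR))$.

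For reductivity, I would invoke the classical criterion of Mostow: a Lie subalgebra $\fg \subset \End(V)$ of real matrices is reductive as soon as it is stable under the adjoint operation $x\mapsto x^\ast$ for some positive-definite inner product on $V$. The required inner product on $\rH(X,\bR)$ is provided by Hodge theory. Fixing a K\"ahler class $\omega$, the Hodge--Riemann bilinear relations produce a positive-definite pairing $\langle\cdot,\cdot\rangle_\omega$ built from the Weil operator and cup product with powers of $\omega$. With respect to this pairing the grading operator $h$ is self-adjoint and $e_\omega, f_\omega$ are adjoint up to sign, so at the very least the Cartan involution $\theta\colon x\mapsto -x^\ast$ preserves the single $\fsl_2$-triple $(e_\omega,h,f_\omega)$.

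The main obstacle is to check that $\theta$ preserves the \emph{full} Lie algebra $\fg(X)\otimes\bR$, not merely this distinguished triple; for Lefschetz $\lambda \neq \omega$ the element $(e_\lambda)^\ast$ need not coincide with $f_\lambda$. I would argue as follows: the weight-$2$ piece $(\fg(X)\otimes\bR)_2$ under $\ad h$ contains every $e_\lambda$, and $\theta$ maps weight $2$ to weight $-2$. Forming the Lie algebra $\widetilde\fg$ generated by $\fg(X)\otimes\bR$ together with its image under $\theta$, both $f_\lambda$ and $\theta(e_\lambda)$ are weight-$(-2)$ elements completing $e_\lambda$ to an $\fsl_2$-triple with $h$; by the uniqueness in Proposition~\ref{prop:JM} applied inside $\widetilde\fg$, they must coincide, so $\theta(e_\lambda)=-f_\lambda \in \fg(X)\otimes\bR$. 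Dually $\theta(f_\lambda)=-e_\lambda$, and since $\fg(X)\otimes\bR$ is generated by its weight-$(\pm 2)$ pieces together with their brackets, stability under $\theta$ propagates to the entire algebra. Reductivity follows, and combined with perfectness yields semi-simplicity.
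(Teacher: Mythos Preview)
The paper gives no proof here; the \qed\ after the citation indicates the result is simply imported from Looijenga--Lunts. Your overall plan (perfect $+$ reductive $\Rightarrow$ semi-simple, with reductivity via stability under a Cartan involution) is a reasonable strategy, and the perfectness step is fine. The reductivity step, however, has a genuine gap.

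You assert that ``both $f_\lambda$ and $\theta(e_\lambda)$ are weight-$(-2)$ elements completing $e_\lambda$ to an $\fsl_2$-triple with $h$'' and then invoke the uniqueness clause of Proposition~\ref{prop:JM} to conclude they coincide. But that uniqueness only identifies two candidates once each is \emph{already known} to satisfy the $\fsl_2$ relations; it cannot manufacture the missing relation $[e_\lambda,-\theta(e_\lambda)]=[e_\lambda,e_\lambda^{\ast}]=h$. And that relation is in fact false for generic $\lambda$: the adjoint $e_\lambda^{\ast}$ is taken with respect to the Hodge inner product attached to the fixed K\"ahler class $\omega$, so it depends on $\omega$, whereas $f_\lambda$ does not. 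Already the rescaling $\lambda\mapsto t\lambda$ exhibits the problem, since $e_{t\lambda}^{\ast}=t\,e_\lambda^{\ast}$ while $f_{t\lambda}=t^{-1}f_\lambda$; these cannot agree for more than one value of $t$. So the claimed identity $\theta(e_\lambda)=-f_\lambda$ fails for all but isolated $\lambda$, and with it your argument that $\fg(X)\otimes_\bQ\bR$ is $\theta$-stable.

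The Looijenga--Lunts proof takes a different route: their \S 1 works with abstract Lefschetz modules and establishes semi-simplicity by analysing the $\ad h$-grading and the adjoint action of a fixed $\fsl_2$-triple on $\fg$, with no appeal to a positive-definite inner product. If you want to rescue the Cartan-involution approach, the honest task is to prove directly that $e_\lambda^{\ast}\in\fg(X)\otimes_\bQ\bR$ for every $\lambda$; this is not a formal consequence of Proposition~\ref{prop:JM} and requires an actual computation using the primitive decomposition and the Hodge--Riemann relations.
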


In other words, $\fg(X)$ is a semisimple Lie algebra over $\bQ$.

\subsection{Hochschild homology and cohomology}  Let $X$ be a smooth projective variety of dimension $d$ with canonical bundle $\omega_X := \Omega_X^d$. Its \emph{Hochschild cohomology} is defined as
\[
	\rHH^n(X) := \Ext^n_{X\times X}(\Delta_\ast \cO_X,\Delta_\ast \cO_X)
\]
and its \emph{Hochschild homology} is defined as
\[
	\rHH_n(X) := \Ext^{d-n}_{X\times X}(\Delta_\ast \cO_X, \Delta_\ast \omega_X).
\]
Composition of extensions defines maps $\rHH^n\otimes \rHH^m \to \rHH^{n+m}$ and
$\rHH^n \otimes \rHH_m \to \rHH_{m-n}$ making $\rHH_\bullet(X)$ into a graded module over the graded ring $\rHH^\bullet(X)$.

The Hochschild--Kostant--Rosenberg isomorphism (twisted by the square root of the Todd class as in \cite{Kontsevich03} and \cite{Caldararu06}) defines isomorphisms
\[
	I^n\colon \rHH^n(X) \longisomto \bigoplus_{i+j=n} \rH^{i}(X,\wedge^j T_X)
\]
and
\[
	I_n\colon \rHH_n(X) \longisomto \bigoplus_{j-i=n} \rH^{i}(X,\Omega_X^j).
\]
Under these isomorphisms, multiplication in $\rHH^\bullet(X)$ corresponds to the operation induced by the product in $\wedge^\bullet T_X$, and the action of $\rHH^\bullet(X)$ on $\rHH_\bullet(X)$ corresponds to the action induced by the contraction action of $\wedge^\bullet T_X$ on $\Omega_X^\bullet$, see \cite{CVdB,CRVdB}.

Together with the degeneration of the Hodge--de Rham spectral sequence, the isomorphism $I_\bullet$ defines an isomorphism
\[
	\rHH_\bullet(X) \longisomto \rH(X,\bC).
\]
This map does not respect the grading, rather it maps $\rHH_{i}$ to the $i$-th column of the Hodge diamond (normalised so that the $0$-th column is the central column $\oplus_p \rH^{p,p}$).
Combining with the action of $\rHH^\bullet$ on $\rHH_\bullet$ we obtain an action of the ring $\rHH^\bullet(X)$ on $\rH(X,\bC)$. 

\begin{theorem}\label{thm:hochschild-invariance}
Let $\Phi\colon \cD X_1 \isomto \cD X_2$ be a derived equivalence between smooth projective complex varieties. Then we have natural  graded isomorphisms
\[
	\Phi^{\rHH^\bullet} \colon \rHH^\bullet(X_1) \longisomto \rHH^\bullet(X_2),\quad
	\Phi^{\rHH_\bullet} \colon \rHH_\bullet(X_1) \longisomto \rHH_\bullet(X_2),
\]
compatible with the ring structure on $\rHH^\bullet$ and the module structure on $\rHH_\bullet$, and such that the square
\[
\begin{tikzcd}
	\rHH_\bullet(X_1) \arrow{r}{I} \arrow{d}{\Phi^{\rHH_\bullet}} & \rH(X_1,\bC) \arrow{d}{\Phi^\rH} \\
	\rHH_\bullet(X_2) \arrow{r}{I}  &  \rH(X_2,\bC)
\end{tikzcd}
\]
commutes.
\end{theorem}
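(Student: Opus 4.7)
My plan is to appeal to Orlov's theorem to realize $\Phi$ as a Fourier--Mukai equivalence with kernel $\cP \in \cD(X_1 \times X_2)$, and to construct all the desired isomorphisms at the level of kernels, using convolution with $\cP$ and its quasi-inverse kernel $\cP^{-1}$.

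For Hochschild cohomology, I would use the identification of $\rHH^\bullet(X_i) = \Hom_{\cD(X_i\times X_i)}(\Delta_*\cO_{X_i}, \Delta_*\cO_{X_i}[\bullet])$ with the graded ring of natural transformations of $\id_{\cD X_i}$. Conjugation $\eta \mapsto \Phi \circ \eta \circ \Phi^{-1}$ transports natural transformations of $\id_{\cD X_1}$ to natural transformations of $\id_{\cD X_2}$; at the kernel level this is $\eta \mapsto \cP * \eta * \cP^{-1}$ composed with the adjunction isomorphism $\cP * \cP^{-1} \isomto \Delta_*\cO_{X_2}$. This is manifestly a graded ring isomorphism $\Phi^{\rHH^\bullet}$, natural in $\Phi$.

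For Hochschild homology, I would use the analogous description via the Serre-twisted diagonal: $\rHH_\bullet(X)$ is identified with suitable morphisms involving $\Delta_*\omega_X$, on which convolution by $\cP$ and $\cP^{-1}$ acts. The same conjugation-by-$\cP$ procedure then defines $\Phi^{\rHH_\bullet}$. Compatibility of $\Phi^{\rHH_\bullet}$ with the $\rHH^\bullet$-module structure is formal from these kernel-level descriptions: both the ring and the module operations arise from composition of morphisms in $\cD(X_i \times X_i)$, and convolution by $\cP$ respects composition; hence $\Phi^{\rHH_\bullet}(\eta \cdot \xi) = \Phi^{\rHH^\bullet}(\eta) \cdot \Phi^{\rHH_\bullet}(\xi)$ by construction.

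The main obstacle is the commutativity of the square involving the twisted HKR isomorphism $I$. This is essentially the content of C\u{a}ld\u{a}raru's Mukai pairing conjecture, proved in increasing generality by C\u{a}ld\u{a}raru--Willerton, Macr\`i--Stellari, and others. The substance of the statement is that the $\sqrt{\Td}$ twist in the Kontsevich--C\u{a}ld\u{a}raru HKR isomorphism is precisely the correction needed so that $I$ intertwines the kernel-theoretic action of $\cP$ on $\rHH_\bullet$ with convolution by the Mukai vector $v(\cP) = \ch(\cP)\sqrt{\Td_{X_1\times X_2}}$ on $\rH(-,\bC)$; without the Todd square root the two actions would differ by a Todd-class factor. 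I would invoke this result rather than reproduce its proof, and the delicate point is to verify that the normalization of $I$ used above matches the one for which this compatibility holds, so that the right-hand vertical arrow $\Phi^\rH$ of the square literally is the cohomological Fourier--Mukai transform defined in the introduction.
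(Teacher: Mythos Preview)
Your proposal is correct and aligns with the paper's own treatment: the paper simply cites \cite{CRVdB} and \cite{MacriStellari09} without further argument, and your sketch unpacks exactly what lies behind those citations---the kernel-level construction of $\Phi^{\rHH^\bullet}$ and $\Phi^{\rHH_\bullet}$, plus the invocation of the Macr\`i--Stellari result (itself building on C\u{a}ld\u{a}raru's conjecture) for the commutativity of the square with the $\sqrt{\Td}$-twisted HKR map. Your caution about matching normalisations of $I$ is well placed but does not indicate a gap: the paper's $I$ is precisely the Kontsevich--C\u{a}ld\u{a}raru one, for which the cited compatibility holds.
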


\begin{proof}See \cite{CRVdB} and \cite{MacriStellari09}.
\end{proof}

\subsection{The Hochschild Lie algebra of a holomorphic symplectic variety}\label{subsec:bi-graded-lie}

Now assume that $X$ is holomorphic symplectic  of dimension $2d$. That is, we assume that there exists a symplectic form $\sigma \in \rH^0(X,\Omega^2_X)$. Note that this implies that a Zariski dense collection of $\sigma \in \rH^0(X,\Omega^2_X)$ will be nowhere degenerate. 

Through the isomorphism $I\colon \rHH_\bullet(X) \to \rH(X,\bC)$, the vector space $\rH(X,\bC)$ becomes a module under the ring $\rHH^\bullet(X)$. 

\begin{lemma} \label{lemma:symplectic-mirror}
$\rHH^\bullet(X) \cong \rH^\bullet(X,\bC)$ as graded rings, and $\rH(X,\bC)$ is free of rank one as $\rHH^\bullet(X)$-module.
\end{lemma}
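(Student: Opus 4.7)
The plan is to use the HKR identifications recalled in the previous subsection to reduce both claims to sheaf-theoretic statements, which one then produces directly from $\sigma$. Under HKR, the graded ring $\rHH^\bullet(X)$ is identified with $\bigoplus_{i,j} \rH^i(X, \wedge^j T_X)$ (with product coming from the wedge on $\wedge^\bullet T_X$), and $\rH(X,\bC) \cong \rHH_\bullet(X) \cong \bigoplus_{i,j} \rH^i(X, \Omega^j_X)$ becomes a module over it via contraction. So it suffices to prove (a) a graded ring isomorphism $\bigoplus \rH^i(X, \wedge^j T_X) \cong \rH^\bullet(X,\bC)$, and (b) that the latter is free of rank one as a module over the former.

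For (a), I would use that the non-degeneracy of $\sigma$ makes the sheaf map $\sigma^\flat\colon T_X \isomto \Omega^1_X$, $v \mapsto \iota_v \sigma$, an isomorphism. This extends uniquely to an isomorphism of graded $\cO_X$-algebras $\wedge^\bullet T_X \isomto \Omega^\bullet_X$, and taking cohomology of each graded piece gives the required graded ring isomorphism.

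For (b), I would take as generator the volume form $\sigma^d \in \rH^0(X, \Omega^{2d}_X)$, which is nowhere-vanishing. The $\rHH^\bullet(X)$-linear orbit map $\xi \mapsto \iota_\xi \sigma^d$ is induced by the sheaf maps $\wedge^q T_X \to \Omega^{2d-q}_X$, $\xi \mapsto \iota_\xi \sigma^d$; these are pointwise isomorphisms, since on any symplectic vector space $(V,\omega)$ of dimension $2d$ a symplectic basis of $V$ shows that contracting the induced basis of $\wedge^q V$ against $\omega^d$ yields, up to signs, the complementary basis of $\wedge^{2d-q} V^*$. Hence $\sigma^d$ generates $\rH(X,\bC)$ freely as an $\rHH^\bullet(X)$-module. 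There is no serious obstacle here: the HKR compatibilities are quoted from the cited references, and both sheaf-level statements reduce to elementary linear algebra on a symplectic vector space.
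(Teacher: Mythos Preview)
Your argument is correct and is essentially the paper's own proof: the paper also uses the $\sigma$-induced isomorphism $\wedge^\bullet T_X \cong \Omega^\bullet_X$ together with HKR (and Hodge--de Rham degeneration, which you use implicitly via the identification $\rH(X,\bC)\cong\bigoplus\rH^i(\Omega^j_X)$) for the ring isomorphism, and also names $\sigma^d\in\rH^0(X,\Omega^{2d}_X)$ as a free generator for the module statement. Your treatment of the second assertion is actually more detailed than the paper's, which simply asserts that $\sigma^d$ generates without spelling out the pointwise linear-algebra check.
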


\begin{proof}
A symplectic form $\sigma$  defines an isomorphism $\Omega^1_X \isomto T_X$, and hence an isomorphism of algebras $\wedge^\bullet \Omega^1_X \isomto \wedge^\bullet T_X$. Combining this with the Hochschild--Kostant--Rosenberg isomorphism $I$ and the degeneration of the Hodge--de Rham spectral sequence we obtain a chain of isomorphisms of graded rings
\[
	\rHH^\bullet(X) \longisomto \rH^\bullet(X,\wedge^\bullet T_X) 
	\longisomto \rH^\bullet(X,\Omega^\bullet_X) \longisomto \rH^\bullet(X,\bC).
\]
This proves the first assertion. For the second it suffices to observe that the module $\rHH_\bullet(X,\bC)$ is generated by $\sigma^{d} \in \rHH_{2d}(X) = \rH^0(X,\Omega^{2d}_X)$.
\end{proof}

Consider the endomorphisms $h_p,h_q \in \End(\rH(X,\bC))$ given by
\[
	h_p = p-d,\quad h_q = q-d,\quad \text{ on } \rH^{p,q}.
\]
These define the Hodge bi-grading on $\rH(X,\bC)$, normalised to be symmetric along the central part $\rH^{d,d}$. Note that $h=h_p+h_q$.  The action of $\rHH^n(X)$ on $\rH(X,\bC)$ has degree $n$ for the  grading defined by $h'=h_q-h_p$. 

Lemma~\ref{lemma:symplectic-mirror} and Hard Lefschetz imply:

\begin{corollary}For a Zariski dense-collection of $\mu \in \rHH^2(X)$ the action by $\mu$ 
\[
	e'_\mu\colon \rH(X,\bC) \to \rH(X,\bC)
\]
has the hard Lefschetz property with respect to the grading defined by $h'$. \qed
\end{corollary}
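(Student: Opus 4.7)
My plan is to reduce the assertion to the classical Hard Lefschetz theorem by using Lemma~\ref{lemma:symplectic-mirror} to transport the operator $e'_\mu$ into something familiar. Fix the generator $\sigma^d \in \rHH_{2d}(X)$, so that $\alpha \mapsto \alpha \cdot \sigma^d$ is an $\rHH^\bullet(X)$-linear isomorphism $\rHH^\bullet(X) \isomto \rHH_\bullet(X)$. Composing with the ring isomorphism $\rHH^\bullet(X) \isomto \rH^\bullet(X,\bC)$ from the same lemma and the HKR isomorphism $I \colon \rHH_\bullet(X) \isomto \rH(X,\bC)$ yields a $\bC$-linear isomorphism
\[
	\phi \colon \rH^\bullet(X,\bC) \isomto \rH(X,\bC)
\]
under which the action $e'_\mu$ is conjugate to ordinary cup product on $\rH^\bullet(X,\bC)$ with the class $\lambda := [\mu] \in \rH^2(X,\bC)$.

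The next step is to verify that $\phi$ intertwines the standard total-degree grading on $\rH^\bullet(X,\bC)$, centered at $\rH^{2d}$, with the $h'$-grading on $\rH(X,\bC)$. Indeed, $\sigma^d \in \rH^0(X,\Omega^{2d}) = \rH^{2d,0}$ sits in the $h' = -2d$ eigenspace, and the action of $\rHH^n$ lowers the homological index on $\rHH_\bullet$ by $n$, which under $I$ raises $h'$ by $n$. Hence $\phi$ sends $\rH^n(X,\bC)$ into the $h' = n-2d$ eigenspace. Under this correspondence, the Hard Lefschetz property for $e'_\mu$ with respect to $h'$ is equivalent to the statement that
\[
	\lambda^n \colon \rH^{2d-n}(X,\bC) \to \rH^{2d+n}(X,\bC)
\]
is an isomorphism for every $n \geq 0$.

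Finally, this classical Hard Lefschetz condition holds whenever $\lambda$ is ample, and, as already noted in \S\ref{sec:Looijenga--Lunts}, it is a Zariski open condition on $\rH^2(X,\bC)$, hence Zariski dense. Transporting through the linear iso $\rHH^2(X) \cong \rH^2(X,\bC)$ yields the required Zariski dense set of $\mu \in \rHH^2(X)$. The main point to check carefully is the grading bookkeeping in the middle paragraph: once one confirms that the distinguished generator $\sigma^d$ at the corner of the Hodge diamond converts total degree on $\rH^\bullet(X,\bC)$ into the $h'$-grading on $\rH(X,\bC)$ as claimed, the rest follows from classical Hard Lefschetz and Zariski openness of the isomorphism condition.
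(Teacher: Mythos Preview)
Your argument is correct and is precisely the unpacking of the paper's one-line justification ``Lemma~\ref{lemma:symplectic-mirror} and Hard Lefschetz imply'': you use the free rank-one module structure to transport $e'_\mu$ to cup product by a class in $\rH^2(X,\bC)$, check that the generator $\sigma^d$ matches the $h'$-grading with the shifted cohomological grading, and invoke classical Hard Lefschetz plus Zariski openness. There is nothing to add.
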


In particular, for every such $\mu$ we have a complex subalgebra $\fg_\mu \subset \End(\rH(X,\bC))$ isomorphic to $\fsl_2$, and the collection of such algebras generates a Lie algebra which we denote by $\fg'(X) \subset \End(\rH(X,\bC))$.  From Lemma~\ref{lemma:symplectic-mirror} we also obtain:

\begin{corollary}\label{cor:abstract-comparison}
The complex Lie algebras $\fg'(X)$and $\fg(X)\otimes_\bQ \bC$ are isomorphic.\qed
\end{corollary}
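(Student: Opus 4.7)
The plan is to deduce the isomorphism directly from Lemma~\ref{lemma:symplectic-mirror} by constructing a $\bC$-linear automorphism $\phi$ of $\rH(X,\bC)$ whose conjugation action intertwines $\fg(X)\otimes_\bQ\bC$ with $\fg'(X)$. Let $\Theta\colon \rH^\bullet(X,\bC)\isomto \rHH^\bullet(X)$ be the graded ring isomorphism supplied by Lemma~\ref{lemma:symplectic-mirror}, and let $\sigma^d\in \rH^{0,2d}(X)$ be the top power of the symplectic form, which (by the proof of the lemma) generates $\rH(X,\bC)$ as a free rank-one module over $\rHH^\bullet(X)$ under the Hochschild action.

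Cup product makes $\rH(X,\bC)$ a free rank-one $\rH^\bullet(X,\bC)$-module with generator $1$, while the Hochschild action composed with $\Theta$ makes it a free rank-one $\rH^\bullet(X,\bC)$-module with generator $\sigma^d$. I would define $\phi\colon \rH(X,\bC)\isomto \rH(X,\bC)$ to be the unique $\rH^\bullet(X,\bC)$-linear isomorphism between these two structures sending $1\mapsto \sigma^d$, explicitly $\phi(\lambda):=\Theta(\lambda)\cdot\sigma^d$ for $\lambda\in\rH^\bullet(X,\bC)$. By construction one has $\phi\,e_\lambda\,\phi^{-1}=e'_{\Theta(\lambda)}$ for every $\lambda\in\rH^2(X,\bC)$.

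Next I would check that $\phi\,h\,\phi^{-1}=h'$. The grading $h$ coincides with the cup-module grading (generator $1$) and $h'$ coincides with the Hochschild-module grading (generator $\sigma^d$), so $\phi\,h\,\phi^{-1}=h'+c\cdot\id$ for some scalar $c\in\bC$ accounting for any degree discrepancy between the two generators. Both $h$ and $h'$ are traceless, by Poincar\'e duality and by Hodge symmetry $h^{p,q}=h^{q,p}$ respectively, and conjugation preserves trace, so $c=0$. Combined with $\phi\,e_\lambda\,\phi^{-1}=e'_{\Theta(\lambda)}$ and the uniqueness of the companion $f$ in Proposition~\ref{prop:JM}, this shows that $\phi$ conjugates each $\fsl_2$-triple $(e_\lambda,h,f_\lambda)$ generating $\fg(X)\otimes_\bQ\bC$ to an $\fsl_2$-triple $(e'_{\Theta(\lambda)},h',f'_{\Theta(\lambda)})$ in $\fg'(X)$, and transports the hard Lefschetz property across. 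Since $\Theta$ is a bijection on degree-two pieces, $\phi$ conjugates a generating set of $\fg(X)\otimes_\bQ\bC$ onto one of $\fg'(X)$, yielding the desired Lie algebra isomorphism.

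The main technical point is the identification $\phi\,h\,\phi^{-1}=h'$, which requires reconciling the sign convention for $h'$ against the placement of $\sigma^d$ within $\rHH_\bullet(X)=\bigoplus_{q-p=\bullet}\rH^{p,q}$ and leveraging tracelessness to kill the residual scalar; everything else is a formal consequence of the free rank-one module isomorphism provided by Lemma~\ref{lemma:symplectic-mirror}.
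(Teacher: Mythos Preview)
Your proposal is correct and follows the same approach the paper has in mind: the paper simply marks the corollary with a \qed, treating it as an immediate consequence of Lemma~\ref{lemma:symplectic-mirror}, and what you have written is precisely the unpacking of that implication via the free rank-one module isomorphism $\phi$. One minor remark: your trace argument to kill the scalar $c$ is valid, but in fact $c=0$ holds on the nose, since the generator $1\in\rH^{0,0}$ has $h$-degree $-2d$ while the generator $\sigma^d\in\rH^0(X,\Omega^{2d}_X)$ has $h'$-degree $h_q-h_p=(-d)-(d)=-2d$ as well, so $\phi$ is already graded of degree zero.
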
 

In the next section, we will show something stronger: that $\fg'(X)$ and $\fg(X)\otimes_\bQ \bC$ coincide as sub-Lie algebras of $\End(\rH(X,\bC))$. Theorem~\ref{bigthm:Lie-invariant} then follows by combining this with the following proposition.

\begin{proposition}\label{prop:derived-invariance-HH-Lie}
Assume that $X_1$ and $X_2$ are holomorphic symplectic varieties. Then for every equivalence $\Phi\colon \cD X_1 \isomto \cD X_2$ there exists a canonical isomorphism of complex Lie algebras
\[
	\Phi^{\fg'}\colon \fg'(X_1) \isomto \fg'(X_2).
\]
It has the property that the map $\Phi^\rH \colon \rH(X_1,\bC) \isomto \rH(X_2,\bC)$ is equivariant with respect to $\Phi^{\fg'}$.
\end{proposition}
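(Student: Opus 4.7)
The plan is to define $\Phi^{\fg'}$ simply as the restriction to $\fg'(X_1)$ of the conjugation map
\[
C_\Phi \colon \End(\rH(X_1,\bC)) \longisomto \End(\rH(X_2,\bC)),\quad \phi \mapsto \Phi^\rH \circ \phi \circ (\Phi^\rH)^{-1},
\]
and to show that $C_\Phi$ carries the generating $\fsl_2$-triples of $\fg'(X_1)$ to those of $\fg'(X_2)$. Once this is established, $C_\Phi$ automatically sends the Lie algebra generated by these triples to the one on the target, and the equivariance of $\Phi^\rH$ with respect to $\Phi^{\fg'}$ is then tautological.

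First I would verify that $C_\Phi$ sends $h'_{X_1}$ to $h'_{X_2}$. The element $h'$ on $\rH(X,\bC)$ is, through the isomorphism $I$, precisely the grading operator of Hochschild homology: $\rHH_n$ corresponds via $I$ to the column $\bigoplus_{j-i=n}\rH^{i,j}$, which is the $n$-eigenspace of $h_q - h_p$. Since $\Phi^{\rHH_\bullet}$ is graded by Theorem~\ref{thm:hochschild-invariance} and compatible with $I$ and $\Phi^\rH$ via the stated commutative square, conjugation by $\Phi^\rH$ indeed carries $h'_{X_1}$ to $h'_{X_2}$.

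Next I would show that $C_\Phi$ carries the cup-action endomorphisms $e'_\mu$ (for $\mu \in \rHH^2(X_1)$) to $e'_{\Phi^{\rHH^\bullet}(\mu)}$. This is immediate from the compatibility of $\Phi^{\rHH_\bullet}$ with the $\rHH^\bullet$-module structure (Theorem~\ref{thm:hochschild-invariance}) together with the commutative square linking $I$ and $\Phi^\rH$. In particular, if $\mu \in \rHH^2(X_1)$ has the hard Lefschetz property with respect to $h'_{X_1}$, then $\Phi^{\rHH^\bullet}(\mu) \in \rHH^2(X_2)$ has the hard Lefschetz property with respect to $h'_{X_2}$, because the condition is intrinsic and preserved under the conjugation-plus-transport we have just described.

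Finally I would invoke Proposition~\ref{prop:JM}: the partner $f'_\mu$ in the $\fsl_2$-triple attached to $\mu$ is uniquely determined, so $C_\Phi$ must send $f'_\mu$ to $f'_{\Phi^{\rHH^\bullet}(\mu)}$. Therefore $C_\Phi$ maps every generating triple $(e'_\mu, h'_{X_1}, f'_\mu)$ of $\fg'(X_1)$ to a generating triple of $\fg'(X_2)$, and hence restricts to a Lie algebra isomorphism $\Phi^{\fg'} \colon \fg'(X_1) \isomto \fg'(X_2)$; since $\Phi^{\rHH^\bullet}$ is surjective on $\rHH^2$, all generators of $\fg'(X_2)$ are hit, giving genuine equality rather than only an inclusion. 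The only step with any substance is the identification of $h'$ with the Hochschild grading operator and the ensuing compatibility; everything else is formal, and the canonicity of $\Phi^{\fg'}$ is built into the fact that it is simply conjugation by the canonical $\Phi^\rH$.
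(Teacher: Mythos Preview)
Your argument is correct and is exactly the approach the paper takes; the paper's proof consists of the single sentence ``This follows immediately from Theorem~\ref{thm:hochschild-invariance},'' and what you have written is a faithful unpacking of that sentence.
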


\begin{proof}This follows immediately from Theorem~\ref{thm:hochschild-invariance}.
\end{proof}

\subsection{Comparison of the two Lie algebras and proof of Theorem \ref{bigthm:Lie-invariant}}\label{subsec:Lie-invariant}

 The remainder of this section is devoted to the proof of the following.

\begin{proposition}\label{prop:comparison}
If $X$ is holomorphic symplectic, then $\fg(X) \otimes_\bQ \bC = \fg'(X)$ as sub-Lie algebras of $\End(\rH(X,\bC))$. 
\end{proposition}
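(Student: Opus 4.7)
The plan is to establish the inclusion $\fg'(X) \subseteq \fg(X) \otimes_\bQ \bC$; equality then follows from Corollary~\ref{cor:abstract-comparison} by equality of dimensions. The strategy is to realize each generator of $\fg'(X)$ --- the Hochschild action operators $e'_\mu$ for $\mu \in \rHH^2(X)$ --- explicitly as an element of $\fg(X)\otimes\bC$, possibly after taking a Lie bracket. This requires first enlarging the supply of known elements in $\fg(X)\otimes\bC$, in particular by producing the Hodge grading operator $h' = h_q - h_p$ and a contraction operator $\Lambda_\sigma$ dual to cup product with the symplectic form.

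First I would show that $h' \in \fg(X)\otimes\bC$. By the Looijenga--Lunts--Verbitsky structure theorem, $\fg(X)\otimes\bC \cong \fso(\widetilde\rH(X)_\bC)$, and the $h$-degree-zero summand of $\fg(X)\otimes\bC$ contains $\fso(\rH^2(X,\bC),\text{BBF})$. The operator $h'|_{\rH^2(X,\bC)}$, acting by $q-p$ on $\rH^{p,q}$, is antisymmetric for the Beauville--Bogomolov form (an immediate check on the Hodge pieces $\rH^{2,0}, \rH^{1,1}, \rH^{0,2}$), hence defines an element of $\fso(\rH^2,\text{BBF})_\bC \subset \fg(X)\otimes\bC$. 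The key step is to verify that this Lie algebra element acts on all of $\rH(X,\bC)$, through the embedding $\fg(X)\otimes\bC \hookrightarrow \End(\rH(X,\bC))$, as the operator $h'$. This amounts to identifying $h'|_{\rH^2}$ with (a multiple of) the Hodge cocharacter of the weight-zero Hodge structure on $\widetilde\rH(X,\bQ)$ and observing that the Verbitsky representation of $\fso(\widetilde\rH)$ on $\rH(X,\bC)$ is a morphism of Hodge structures. Combined with $h = h_p + h_q \in \fg(X)$, this yields $h_p, h_q \in \fg(X)\otimes\bC$.

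Next, the holomorphic symplectic form $\sigma \in \rH^{2,0}(X) \subset \rH^2(X,\bC)$ gives $L_\sigma \in \fg(X)\otimes\bC$ by $\bC$-linearity. The operator $L_\sigma$ has $h_p$-degree $+2$ and enjoys the Hard Lefschetz property with respect to the $h_p$-grading on $\rH(X,\bC)$, by the holomorphic symplectic Lefschetz theorem applied to each Dolbeault column $\rH^\bullet(X,\Omega^\bullet_X)$. Applying Proposition~\ref{prop:JM} to the semi-simple algebra $\fg(X)\otimes\bC$ with grading operator $h_p$ and $e = L_\sigma$ produces a Lefschetz dual $f \in \fg(X)\otimes\bC$; by uniqueness in $\End(\rH(X,\bC))$ it coincides with the contraction operator $\Lambda_\sigma$ by the Poisson bivector $\sigma^{-1} \in \rH^0(X,\wedge^2 T_X)$. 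Now for each $\mu \in \rHH^2(X)$, the HKR decomposition $\rHH^2(X) = \rH^0(X,\wedge^2 T_X) \oplus \rH^1(X,T_X) \oplus \rH^2(X,\cO_X)$ splits $\mu$ into three pieces, and each is handled separately: the $\rH^2(X,\cO_X)$-part acts by cup product with an element of $\rH^{0,2}(X) \subset \rH^2(X,\bC)$; the $\rH^0(X,\wedge^2 T_X)$-part is a scalar multiple of $\Lambda_\sigma$; and for $\nu \in \rH^1(X,T_X)$, a direct Cartan-type identity on polyvector-form contractions yields $e'_\nu = c\,[\Lambda_\sigma, L_{\sigma(\nu)}]$ for a nonzero scalar $c$, where $\sigma(\nu) \in \rH^{1,1}(X)$ is the image under $T_X \isomto \Omega^1_X$. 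Each piece lies in $\fg(X)\otimes\bC$, whence $\fg'(X) \subseteq \fg(X)\otimes\bC$, and we conclude by comparing dimensions via Corollary~\ref{cor:abstract-comparison}.

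The principal obstacle is the identification in the second paragraph: showing that $h'|_{\rH^2}$, viewed abstractly inside $\fso(\rH^2) \subset \fg(X)\otimes\bC$, acts on the whole of $\rH(X,\bC)$ as the Hodge grading operator $h'$. This is a compatibility between Hodge gradings and the Verbitsky representation, essentially Verbitsky's `mirror symmetry' in the symplectic direction; the remaining Cartan-type bracket computations in the third paragraph are then routine.
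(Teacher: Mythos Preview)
Your overall strategy matches the paper's: show $\fg'(X)\subseteq\fg(X)\otimes\bC$ and conclude by Corollary~\ref{cor:abstract-comparison}; the handling of the $\rH^2(\cO_X)$- and $\rH^1(T_X)$-pieces of $\rHH^2(X)$ via cup product and a commutator identity is essentially the same as in the paper (Lemma~\ref{lemma:degree-1-bracket}, Corollary~\ref{cor:eta-in-g}). The divergence is in the first step: how to place $h_p$, $h_q$ and the contraction $\check\sigma$ inside $\fg(X)\otimes\bC$.

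The paper does \emph{not} obtain $h'$ first. Instead it shows $\check\sigma\in\fg(X)\otimes\bC$ directly (Lemma~\ref{lemma:tau-in-g}) and then deduces $h_p=[\sigma,\check\sigma]$ (Corollary~\ref{cor:h-in-g}). The trick is that the two $\fsl_2$-triples $(\sigma,h_p,\check\sigma)$ and $(\alpha,h_q,\check\alpha)$ in $\End(\rH(X,\bC))$, for generic $\alpha\in\rH^2(\cO_X)$, commute elementwise; hence $(\alpha\pm\sigma,\,h,\,\check\alpha\pm\check\sigma)$ are again $\fsl_2$-triples. Since $\alpha\pm\sigma$ lies in $\rH^2(X,\bC)$ and evidently has the hard Lefschetz property with respect to $h$, the dual elements $\check\alpha\pm\check\sigma$ lie in $\fg(X)\otimes\bC$ by definition, and subtracting gives $\check\sigma\in\fg(X)\otimes\bC$.

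Your proposed route---realize $h'|_{\rH^2}$ inside $\fso(\rH^2)\subset\fg(X)\otimes\bC$ via Theorem~\ref{thm:LLV}, then argue that this element acts as $h'$ on all of $\rH(X,\bC)$---has a genuine gap at precisely the step you flag. The required Hodge compatibility of the LLV representation (cf.\ Lemma~\ref{lemma:respects-hodge}) is, in the paper's development, a \emph{consequence} of knowing $h'\in\fg(X)\otimes\bC$; I do not see an independent argument, and a derivation argument via $[\xi,e_\lambda]=e_{\xi\lambda}$ controls only the action on $\SH(X)$, not on all of $\rH(X,\bC)$. The paper's rotation trick sidesteps this issue, and moreover avoids Theorem~\ref{thm:LLV}, which is stated for hyperk\"ahler varieties whereas Proposition~\ref{prop:comparison} is asserted for holomorphic symplectic $X$ in general.
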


Let $X$ be holomorphic symplectic. If $\cF$ is a coherent $\cO_X$-module then we will simply write $\rH^i(\cF)$ for $\rH^i(X,\cF)$. We have decompositions
\[
	\rH^2(X,\bC) = \rH^2(\cO_X) \oplus \rH^1(\Omega^1_X) \oplus \rH^0(\Omega^2_X)
\]
and
\[
	\rHH^2(X) = \rH^2(\cO_X) \oplus \rH^1(T_X) \oplus \rH^0(\wedge^2 T_X).
\]
We will use the same symbol $\lambda$ to denote an element $\lambda \in \rH^2(X,\bC)$ and the endomorphism  of $\End(\rH(X,\bC))$ given by cup product with $\lambda$. Note that we have $\lambda \in \fg(X)\otimes_\bQ \bC$ by construction. Similarly, we will use the same symbol for $\mu \in \rHH^2(X)$ and the resulting $\mu  \in \End(\rH(X,\bC))$, given by contraction with $\mu$. We have $\mu \in \fg'(X)$. 

For a  symplectic form  $\sigma \in \rH^0(\Omega^2_X)$ we denote by $\check{\sigma} \in \rH^0(\wedge^2 T_X)$  the image of the form $\sigma \in \rH^0(\Omega^2_X)$ under the isomorphism $\Omega^2_X \to \wedge^2 T_X$ defined by $\sigma$. In suitable local coordinates, 
we have 
\[
	\sigma = \rmd u_1 \wedge \rmd v_1 + \cdots + \rmd u_d \wedge \rmd v_d
\]
and
\[
	\check\sigma = \frac{\partial}{\partial u_1} \wedge \frac{\partial}{\partial v_1}  + \ldots +
	\frac{\partial}{\partial u_d} \wedge \frac{\partial}{\partial v_d}. 
\]

\begin{lemma}\label{lemma:sigma-triple}If $\sigma$ is a nowhere degenerate symplectic form then 
$(\sigma,h_p,\check\sigma)$ is an $\fsl_2$-triple in $\End(\rH(X,\bC))$.
\end{lemma}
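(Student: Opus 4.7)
The plan is to verify the three defining $\fsl_2$-relations for the triple $(e,h,f)=(\sigma,h_p,\check\sigma)$ in turn. The first two relations, $[h_p,\sigma]=2\sigma$ and $[h_p,\check\sigma]=-2\check\sigma$, follow immediately from the Hodge bi-degrees recorded just above: cup product by $\sigma\in\rH^0(\Omega^2_X)=\rH^{2,0}$ sends $\rH^q(\Omega^p_X)$ into $\rH^q(\Omega^{p+2}_X)$, so $\sigma$ has $h_p$-degree $+2$; the contraction action of $\check\sigma\in\rH^0(\wedge^2 T_X)$ sends $\rH^q(\Omega^p_X)$ into $\rH^q(\Omega^{p-2}_X)$, so $\check\sigma$ has $h_p$-degree $-2$.

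The content of the lemma is therefore the commutator identity $[\sigma,\check\sigma]=h_p$. The operators $\sigma\wedge -$ and $\iota_{\check\sigma}$ on $\rH(X,\bC)$ are induced by the $\cO_X$-linear maps of coherent sheaves $\Omega^p_X\to\Omega^{p+2}_X$ and $\Omega^p_X\to\Omega^{p-2}_X$, so in particular they act $\rH^q$-linearly on each $\rH^q(\Omega^p_X)$. It therefore suffices to establish the corresponding identity of $\cO_X$-linear endomorphisms of the graded sheaf $\Omega^\bullet_X$, namely that $[\sigma\wedge -,\iota_{\check\sigma}]$ acts on $\Omega^p_X$ as multiplication by $p-d$. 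As both sides are maps of locally free $\cO_X$-modules, this identity can be checked pointwise on fibers.

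Because $\sigma$ is nowhere degenerate, Darboux's theorem provides, at every $x\in X$, coordinates in which $\sigma_x=\sum_{i=1}^d\rmd u_i\wedge\rmd v_i$ and $\check\sigma_x=\sum_{i=1}^d\partial_{u_i}\wedge\partial_{v_i}$ on the fiber $\wedge^\bullet T^*_xX$. The remaining claim thus reduces to the classical statement that on the exterior algebra $\wedge^\bullet W^*$ of a $2d$-dimensional symplectic vector space $(W,\sigma)$, the triple
\[
	\bigl(L_\sigma,\ p-d,\ \Lambda_{\check\sigma}\bigr)
\]
consisting of wedging with $\sigma$, the centered degree operator, and contraction with the Poisson bivector $\check\sigma$ forms an $\fsl_2$-triple; equivalently, this is the standard $\fsl_2\subset\mathfrak{sp}(W)$ acting on the exterior algebra representation.

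The only obstacle is bookkeeping of signs in the contraction convention, but no deeper difficulty: the identity can be confirmed directly on monomials $e^I\wedge f^J$ in a symplectic basis, or by induction on $d$ reducing to the $d=1$ case, where $\wedge^\bullet W^*$ is three-dimensional and the verification is immediate.
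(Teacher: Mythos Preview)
Your proposal is correct and follows essentially the same approach as the paper: verify the first two relations from the Hodge bi-degrees, and reduce the commutator identity $[\sigma,\check\sigma]=h_p$ to a local computation in symplectic coordinates showing the action is $p-d$ on $\Omega^p$. You are more explicit than the paper about why the reduction to a pointwise check is legitimate ($\cO_X$-linearity of the sheaf maps), and you helpfully identify the residual linear-algebra statement as the classical symplectic Lefschetz $\fsl_2$; note, incidentally, that for the fiberwise check you do not need Darboux at all---choosing a symplectic basis of $T_xX$ is pure linear algebra.
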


\begin{proof}
Clearly $\sigma$ has degree $2$ and $\check\sigma$ has degree $-2$ for the grading given by $h_p$,
so that $[h_p,\sigma]=2\sigma$ and $[h_p,\check\sigma]=-2\check\sigma$.

We need to show that $[  \sigma,  \check\sigma ] = h_p$. This follows immediately from a local computation: in the above local coordinates, one verifies that on the standard basis of $\Omega^p$ the commutator $[\sigma,\check\sigma]$ acts as $p-d$. 
\end{proof}

Note that the existence of one nowhere degenerate $\sigma$ implies that a Zariski dense collection of $\sigma \in \rH^0(\Omega^2_X)$ is nowhere degenerate.

\begin{lemma}\label{lemma:alpha-triple}
For a Zariski-dense collection $\alpha \in \rH^2(X,\cO_X)$ there is a $\check\alpha \in \End(\rH(X,\bC))$ so that $(\alpha,h_q,\check\alpha)$ is an $\fsl_2$-triple.
\end{lemma}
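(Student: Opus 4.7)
My plan is to transport the $\fsl_2$-triple $(\sigma,h_p,\check\sigma)$ of Lemma~\ref{lemma:sigma-triple} across the symmetry of the Hodge diamond that swaps $p$ and $q$, namely complex conjugation. Write $c\colon \rH(X,\bC)\to\rH(X,\bC)$ for complex conjugation relative to the real structure $\rH(X,\bR)$; it is a $\bC$-antilinear involution that exchanges $\rH^{p,q}$ with $\rH^{q,p}$. The key observation is that, although $c$ itself is antilinear, the map $T\mapsto cTc^{-1}$ is a $\bC$-linear Lie algebra automorphism of $\End(\rH(X,\bC))$ (being a composition of two antilinear maps around a linear one). I would then check directly that this automorphism carries $h_p$ to $h_q$, and sends cup product by any $\lambda \in \rH^2(X,\bC)$ to cup product by $\bar\lambda$; the latter follows from the fact that cup product is defined over $\bR$, so $c(\lambda \cup v) = \bar\lambda \cup c(v)$.

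Applying this automorphism to the triple $(\sigma, h_p, \check\sigma)$ immediately yields that $(\bar\sigma, h_q, c\check\sigma c^{-1})$ is an $\fsl_2$-triple. Since $\bar\sigma$ lies in $\rH^{0,2} = \rH^2(X,\cO_X)$, this already exhibits at least one $\alpha \in \rH^2(X,\cO_X)$ satisfying the conclusion of the lemma. To upgrade this to a Zariski-dense collection, I would note that the condition that cup product with $\alpha$ have the hard Lefschetz property with respect to $h_q$ can be written as the simultaneous non-vanishing of the determinants of the maps
\[
\alpha^m\colon \bigoplus_p \rH^{p,d-m} \longto \bigoplus_p \rH^{p,d+m}
\]
computed in fixed bases. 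These determinants are polynomial functions of $\alpha \in \rH^2(X,\cO_X)$, and they are non-zero at $\alpha = \bar\sigma$ by the previous step, so their simultaneous non-vanishing cuts out a non-empty, hence Zariski-dense, open subset of $\rH^2(X,\cO_X)$. For every $\alpha$ in this subset, Proposition~\ref{prop:JM} supplies a unique $\check\alpha$ making $(\alpha, h_q, \check\alpha)$ into an $\fsl_2$-triple.

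The only genuinely delicate point is the bookkeeping around complex conjugation: it is essential that conjugation by $c$ be a \emph{$\bC$-linear} Lie algebra automorphism of $\End(\rH(X,\bC))$, and that it intertwine the structures exactly as claimed ($h_p\leftrightarrow h_q$ and $\sigma\leftrightarrow\bar\sigma$). This is where the main obstacle lies, but it reduces to a purely formal verification once one remembers that $c$ comes from the real structure on $\rH(X,\bR)$ and that cup product is defined over $\bR$. With that in hand, the rest of the argument is a direct transposition of Lemma~\ref{lemma:sigma-triple} from the $(1,0)$ side of the Hodge diamond to the $(0,1)$ side.
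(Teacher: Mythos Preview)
Your approach is exactly the paper's one-line ``Hodge symmetry'' argument spelled out in detail, and it works. One correction, though: the map $T\mapsto cTc^{-1}$ on $\End(\rH(X,\bC))$ is $\bC$-\emph{antilinear}, not $\bC$-linear, since $c(\lambda T)c^{-1}=\bar\lambda\,cTc^{-1}$. You even flag $\bC$-linearity as ``essential'', but in fact it is neither true nor needed. What you actually use is that conjugation by $c$ is a ring automorphism of $\End(\rH(X,\bC))$ (so it preserves brackets) and that the scalars $2,-2$ in the $\fsl_2$ relations are real; together these already guarantee that $(c\sigma c^{-1},\,ch_pc^{-1},\,c\check\sigma c^{-1})=(\bar\sigma,\,h_q,\,c\check\sigma c^{-1})$ is again an $\fsl_2$-triple. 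With that adjustment your identifications $ch_pc^{-1}=h_q$ and $c\,e_\sigma\,c^{-1}=e_{\bar\sigma}$ are correct, and the Zariski-density step via non-vanishing of determinants followed by Proposition~\ref{prop:JM} is fine.
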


\begin{proof}This follows from Lemma~\ref{lemma:sigma-triple} and Hodge symmetry.
\end{proof}

\begin{lemma}\label{lemma:tau-in-g}
For all $\tau \in \rH^0(X,\wedge^2 T_X)$ the endomorphism $\tau$ lies in $\fg(X)\otimes_\bQ\bC$.
\end{lemma}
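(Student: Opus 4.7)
My plan is to invoke Proposition~\ref{prop:JM} for the $\fsl_2$-triple $(\sigma, h_p, \check\sigma)$ of Lemma~\ref{lemma:sigma-triple}, taking the ambient semi-simple Lie algebra to be $\fg(X) \otimes_\bQ \bC$. The element $\sigma$ (acting by cup product) lies in $\fg(X) \otimes_\bQ \bC$ automatically: the hard Lefschetz locus is Zariski dense in $\rH^2(X, \bC)$, and the set of $\lambda$ with $e_\lambda \in \fg(X) \otimes_\bQ \bC$ is a linear (hence Zariski closed) subspace of $\rH^2(X, \bC)$, so the two sets coincide. Proposition~\ref{prop:JM} then reduces the problem to showing $h_p \in \fg(X) \otimes_\bQ \bC$; once this is achieved, a density argument in $\sigma$ will upgrade $\check\sigma \in \fg(X) \otimes_\bQ \bC$ to the full statement for all $\tau \in \rH^0(\wedge^2 T_X)$.

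The main obstacle is exhibiting $h_p$ inside $\fg(X) \otimes_\bQ \bC$, or equivalently (as $h = h_p + h_q \in \fg(X)$ tautologically) the Hodge--Weil operator $h_p - h_q$. Since $\fg(X) \otimes_\bQ \bC$ is generated by cup products $e_\lambda$ with $\lambda \in \rH^2(X, \bC)$ and their iterated brackets, it is preserved by the Hodge-theoretic $\mathbf{G}_m \times \mathbf{G}_m$-action on $\End(\rH(X, \bC))$ by conjugation. Hence $\ad(h_p - h_q)$ is a derivation of $\fg(X) \otimes_\bQ \bC$, which by semi-simplicity is inner: $\ad(h_p - h_q) = \ad(y)$ on $\fg(X) \otimes_\bQ \bC$ for some $y \in \fg(X) \otimes_\bQ \bC$. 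I would then identify $y$ with $h_p - h_q$ in $\End(\rH(X, \bC))$ itself by combining the constraints provided by the $\fsl_2$-triples of Lemmas~\ref{lemma:sigma-triple} and~\ref{lemma:alpha-triple} with the structure of the $\fg(X)$-representation on $\rH(X, \bC)$, forcing the central piece $h_p - h_q - y$ in the commutant of $\fg(X) \otimes_\bQ \bC$ to vanish. A cleaner alternative, applicable in the hyperk\"ahler case, is to invoke the Looijenga--Lunts--Verbitsky identification $\fg(X) \otimes_\bQ \bC \cong \fso(\widetilde\rH(X, \bC))$, under which $h_p - h_q$ corresponds to the Weil operator of the natural polarised Hodge structure on $\widetilde\rH(X, \bC)$, which automatically lies in $\fso(\widetilde\rH(X, \bC))$.

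For the concluding density step, once $\check\sigma \in \fg(X) \otimes_\bQ \bC$ is established for each nowhere degenerate $\sigma$, I note that the map $\sigma \mapsto \check\sigma$ is essentially matrix inversion in local trivialisations, with differential at any nowhere degenerate $\sigma_0$ equal to the linear isomorphism $\tau \mapsto -\check\sigma_0 \circ \tau \circ \check\sigma_0$ from $\rH^0(\Omega^2_X)$ onto $\rH^0(\wedge^2 T_X)$ (the two spaces have the same dimension, via the isomorphism induced by $\sigma_0$). Consequently the image of $\sigma \mapsto \check\sigma$ is Zariski dense in $\rH^0(\wedge^2 T_X)$. Since $\fg(X) \otimes_\bQ \bC \subset \End(\rH(X, \bC))$ is a linear, and hence Zariski closed, subspace, containing this dense subset forces $\rH^0(\wedge^2 T_X) \subset \fg(X) \otimes_\bQ \bC$, as desired.
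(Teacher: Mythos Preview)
Your strategy is genuinely different from the paper's, and the gap lies precisely where you flag uncertainty: showing $h_p - h_q \in \fg(X)\otimes_\bQ\bC$. The derivation argument only gives $h_p - h_q = y + z$ with $y \in \fg(X)\otimes_\bQ\bC$ and $z$ in the commutant of $\fg(X)\otimes_\bQ\bC$ inside $\End(\rH(X,\bC))$. You then assert that ``constraints'' will force $z=0$, but you do not supply them, and in the generality of the lemma (holomorphic symplectic, not hyperk\"ahler) there is no reason for this commutant to be small: $\rH(X,\bC)$ need not be generated by $\rH^2$, and the $\fg(X)$-module $\rH(X,\bC)$ can have many isotypic components on which $z$ may act by distinct scalars. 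Your ``cleaner alternative'' via the LLV identification $\fg(X)\cong\fso(\widetilde\rH(X,\bQ))$ is only available for hyperk\"ahler varieties, whereas the lemma is stated and used for arbitrary holomorphic symplectic $X$. So as written the argument does not close.

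It is worth noting that the paper's logical order is the reverse of yours: the paper proves Lemma~\ref{lemma:tau-in-g} \emph{first}, and then deduces $h_p,h_q\in\fg(X)\otimes_\bQ\bC$ as Corollary~\ref{cor:h-in-g}. The trick that makes this possible is to avoid $h_p$ altogether and work instead with $h=h_p+h_q$, which lies in $\fg(X)$ tautologically. Concretely, one takes the $\fsl_2$-triple $(\sigma,h_p,\check\sigma)$ from Lemma~\ref{lemma:sigma-triple} and the triple $(\alpha,h_q,\check\alpha)$ from Lemma~\ref{lemma:alpha-triple}; since $\sigma,h_p$ commute with $\alpha,h_q$, the two triples commute elementwise, and hence $(\alpha\pm\sigma,\,h,\,\check\alpha\pm\check\sigma)$ are $\fsl_2$-triples. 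As $\alpha\pm\sigma\in\rH^2(X,\bC)$ visibly have the hard Lefschetz property, the elements $\check\alpha\pm\check\sigma$ lie in $\fg(X)\otimes_\bQ\bC$ by definition, and subtracting gives $\check\sigma\in\fg(X)\otimes_\bQ\bC$. Your density step at the end then finishes the argument. This bypasses entirely the problem of locating $h_p$ inside $\fg(X)\otimes_\bQ\bC$.
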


\begin{proof}
It suffices to show that this holds for a Zariski dense collection of $\tau$, hence we may assume without loss of generality that $\tau = \check \sigma$ with $\sigma$ and $\check \sigma$ as in Lemma~\ref{lemma:sigma-triple}. Let $\alpha$ and $\check\alpha$ be as in Lemma~\ref{lemma:alpha-triple}. Because $\sigma$ and $h_p$ commute with both $\alpha$ and $h_q$, we have that every element of the $\fsl_2$-triple $(\sigma,h_p,\check \sigma)$ commutes with every element of the $\fsl_2$-triple $(\alpha,h_q,\check \alpha)$. From this, it follows that
\[
	(\alpha + \sigma, h, \check\alpha + \check\sigma)\quad \text{and}\quad (\alpha-\sigma, h, \check\alpha - \check\sigma)
\]
are $\fsl_2$-triples. Since the elements $\alpha \pm \sigma$ lie in $\rH^2(X,\bC)$, and apparently have the hard Lefschetz property, we conclude that the endomorphisms $\check\alpha \pm \check\sigma$ lie in $\fg(X)\otimes_\bQ \bC$, hence also $\tau=\check\sigma$ lies in $\fg(X)\otimes_\bQ \bC$. 
\end{proof}

\begin{corollary}\label{cor:h-in-g}
$h_p$ and $h_q$ lie in $\fg(X)\otimes_\bQ\bC$.
\end{corollary}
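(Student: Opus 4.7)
The plan is to deduce the corollary by combining the previous two lemmas, since both $h_p$ and $h_q$ arise as brackets inside $\fsl_2$-triples whose remaining members have already been shown to live in $\fg(X)\otimes_\bQ\bC$.

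First, I would handle $h_p$. By Lemma~\ref{lemma:sigma-triple} we have the relation $[\sigma,\check\sigma]=h_p$ for any nowhere degenerate symplectic form $\sigma$. Now $\sigma\in \rH^2(X,\bC)$ acts as cup product, so $\sigma\in \fg(X)\otimes_\bQ\bC$ by construction. On the other hand, $\check\sigma\in \rH^0(X,\wedge^2 T_X)$ lies in $\fg(X)\otimes_\bQ\bC$ by Lemma~\ref{lemma:tau-in-g}. Since $\fg(X)\otimes_\bQ\bC$ is a Lie subalgebra of $\End(\rH(X,\bC))$, the bracket $h_p=[\sigma,\check\sigma]$ lies in $\fg(X)\otimes_\bQ\bC$.

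Next I would obtain $h_q$. The cleanest route is to observe that the grading operator $h$ already lies in $\fg(X)$ (it is part of any $\fsl_2$-triple $(e_\lambda,h,f_\lambda)$ attached to an $e_\lambda$ with the hard Lefschetz property), so
\[
  h_q = h - h_p \in \fg(X)\otimes_\bQ \bC.
\]
Alternatively, I could mirror the argument above on the antiholomorphic side: Lemma~\ref{lemma:alpha-triple} gives an $\fsl_2$-triple $(\alpha,h_q,\check\alpha)$ for a Zariski-dense set of $\alpha\in \rH^2(X,\cO_X)$, and the proof of Lemma~\ref{lemma:tau-in-g} produces $\check\alpha$ as $\tfrac12((\check\alpha+\check\sigma)+(\check\alpha-\check\sigma))$, both summands being elements of $\fg(X)\otimes_\bQ\bC$ coming from the hard-Lefschetz $\fsl_2$-triples attached to the classes $\alpha\pm\sigma \in \rH^2(X,\bC)$. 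Then $h_q=[\alpha,\check\alpha]\in\fg(X)\otimes_\bQ\bC$.

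There is no real obstacle here; the only thing to take care of is that the proof of Lemma~\ref{lemma:tau-in-g} already provides the analogue for $\check\alpha$ (so one does not have to reprove the antiholomorphic version from scratch), and that one remembers $h\in\fg(X)$ by definition of the LLV algebra. With either route the corollary follows in one line from what has been established.
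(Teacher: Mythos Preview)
Your proof is correct and follows essentially the same route as the paper: obtain $h_p=[\sigma,\check\sigma]$ from Lemma~\ref{lemma:sigma-triple}, with $\sigma\in\fg(X)\otimes_\bQ\bC$ by construction and $\check\sigma\in\fg(X)\otimes_\bQ\bC$ by Lemma~\ref{lemma:tau-in-g}, and then $h_q=h-h_p$. The alternative argument you sketch for $h_q$ via $(\alpha,h_q,\check\alpha)$ is also valid but unnecessary.
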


\begin{proof}
By Lemma~\ref{lemma:sigma-triple} we have $h_p=[\sigma,\check\sigma]$, which by Lemma~\ref{lemma:tau-in-g} lies in $\fg(X)\otimes_\bQ \bC$. Since $h_q=h-h_p$ we also have that $h_q$ lies in $\fg(X)\otimes_\bQ \bC$.
\end{proof}

Fix a $\tau\in \rH^0(X,\wedge^2 T_X)$ that is nowhere degenerate as an alternating form on $\Omega^1_X$. This defines isomorphisms
$c_\tau \colon \Omega^1_X \to T_X$ and $c_\tau\colon \rH^1(\Omega^1_X)\to\rH^1(T_X)$ given by contracting sections of $\Omega^1_X$ with $\tau$. 

\begin{lemma}\label{lemma:degree-1-bracket}
For all $\eta \in \rH^1(\Omega^1_X)$ we have $[\tau,\eta] = c_\tau(\eta)$ in $\End(\rH(X,\bC))$.
\end{lemma}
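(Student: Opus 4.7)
The plan is to verify the identity pointwise by realizing both sides as explicit operators on Dolbeault cochains. By the Hochschild--Kostant--Rosenberg identification (twisted as in Section~2.3) and its compatibility with the module structure established in~\cite{CRVdB}, the action of $\tau \in \rH^0(X,\wedge^2 T_X) \subset \rHH^2(X)$ on $\rH(X,\bC) \cong \bigoplus_{i,j}\rH^i(X,\Omega^j_X)$ is induced by contraction of Dolbeault forms with a holomorphic bivector $\tilde\tau$ representing $\tau$; the action of $\eta \in \rH^1(\Omega^1_X) \subset \rH^2(X,\bC)$ is wedge product with a $\bar\partial$-closed $(1,1)$-form $\tilde\eta$ representing $\eta$. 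Both $\iota_{\tilde\tau}$ and $\tilde\eta\wedge\cdot$ commute with $\bar\partial$ (the former because $\tilde\tau$ is holomorphic, the latter because $\tilde\eta$ is $\bar\partial$-closed) and so descend to operators on Dolbeault cohomology realizing the actions of $\tau$ and $\eta$.

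Next, I would compute the graded commutator $[\iota_{\tilde\tau},\,\tilde\eta\wedge\cdot\,]$ on smooth forms in local holomorphic coordinates. Writing $\tilde\tau = \sum_{i,j} a^{ij}\,\partial_{u_i}\wedge\partial_{u_j}$ and $\tilde\eta = \sum_{k,l} \eta_{k\bar l}\,du_k\wedge d\bar u_l$, iteration of the graded Leibniz rule for the degree $-1$ derivations $\iota_{\partial_{u_i}}$, combined with the identities $\iota_{\partial_{u_i}}(d\bar u_l)=0$, yields for any form $\tilde\omega$ the local formula
\[
[\iota_{\tilde\tau},\,\tilde\eta\wedge\cdot\,]\,\tilde\omega \;=\; \sum_{k,l}\eta_{k\bar l}\, d\bar u_l \wedge \iota_{\iota_{du_k}(\tilde\tau)}(\tilde\omega).
\]
The right-hand side is the pointwise formula for the action of the $T_X$-valued $(0,1)$-form $c_{\tilde\tau}(\tilde\eta) = \sum \eta_{k\bar l}\,\iota_{du_k}(\tilde\tau)\otimes d\bar u_l$, representing the class $c_\tau(\eta) \in \rH^1(X,T_X) \subset \rHH^2(X)$, under the HKR-action on $\rH(X,\bC)$: contraction with the vector-field factor and wedging with the anti-holomorphic factor. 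Passing to cohomology gives the identity $[\tau,\eta]=c_\tau(\eta)$ in $\End(\rH(X,\bC))$.

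The principal obstacle I expect is the bookkeeping of signs: the convention for the interior product of a bivector ($\iota_{v\wedge w} = \pm\iota_v\iota_w$), the signs in the graded Leibniz rule applied to a $(1,1)$-form, the order of wedge-with-$d\bar u_l$ versus contract-the-holomorphic-factor, and the sign conventions in the Kontsevich--Caldararu HKR normalization (including the square-root-of-Todd twist, whose action in this bidegree is essentially trivial since the correction involves lower-order polyvector terms that commute with both $\tilde\tau$ and $\tilde\eta$ modulo the stated identity). Once a single consistent convention is fixed, the argument reduces to the local linear-algebra calculation above and is routine.
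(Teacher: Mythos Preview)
Your approach is correct and is essentially the same local linear-algebra computation as the paper's, carried out in the Dolbeault model. The paper's proof is more economical: since $\tau\in\rH^0(X,\wedge^2 T_X)$ is a global holomorphic section, its action on $\rH^i(\Omega^j_X)$ is induced by a sheaf map $\Omega^j_X\to\Omega^{j-2}_X$, so the commutator $[\tau,\eta]$ is determined by the commutator of wedge-with-a-local-section-of-$\Omega^1_X$ and contraction-with-$\tau$ as sheaf maps $\Omega^p_X\to\Omega^{p-1}_X$; the $\rH^1$-part of $\eta$ (your $d\bar u_l$ factors) plays no role and need not be tracked. Your worry about the Todd twist is unnecessary: the paper has already absorbed this into the HKR isomorphism via \cite{CRVdB}, so the $\rHH^\bullet$-action on $\rH(X,\bC)$ is literally contraction and no correction terms appear.
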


\begin{proof}
This is again a local computation. If $\eta$ is a local section of $\Omega^1_X$, then a computation on a local basis shows that
$[\tau,\eta]=c_\tau(\eta)$ as maps $\Omega^p_X \to \Omega^{p-1}_X$. 
\end{proof}

\begin{corollary}\label{cor:eta-in-g}
Every element $\eta'$ of $\rH^1(X,T_X)$ lies in $\fg(X)\otimes_\bQ \bC$.
\end{corollary}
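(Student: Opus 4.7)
The plan is to exhibit $\eta'$ as a Lie bracket of two elements already known to lie in $\fg(X)\otimes_\bQ \bC$, using Lemma~\ref{lemma:degree-1-bracket} as the key tool.

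First I would fix a nowhere degenerate bivector $\tau \in \rH^0(X,\wedge^2 T_X)$; such a $\tau$ exists because the symplectic form $\sigma$ gives $\check\sigma$, and ``nowhere degenerate'' is a Zariski open condition. Since $\tau$ is nondegenerate, the contraction isomorphism $c_\tau\colon \rH^1(\Omega^1_X)\isomto \rH^1(T_X)$ is bijective, so I can lift $\eta'$ to some $\eta \in \rH^1(\Omega^1_X)$ with $c_\tau(\eta)=\eta'$. Lemma~\ref{lemma:degree-1-bracket} then gives the crucial identity
\[
	\eta' \;=\; c_\tau(\eta) \;=\; [\tau,\eta]
\]
inside $\End(\rH(X,\bC))$.

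Now it suffices to show that both $\tau$ and $\eta$, viewed as endomorphisms of $\rH(X,\bC)$, belong to $\fg(X)\otimes_\bQ \bC$. For $\tau$ this is exactly the content of Lemma~\ref{lemma:tau-in-g}. For $\eta$, I use that $\eta \in \rH^1(\Omega^1_X) \subset \rH^2(X,\bC)$ acts by cup product. The classes with the Hard Lefschetz property form a Zariski dense (and in particular $\bQ$-linearly spanning) subset of $\rH^2(X,\bQ)$, and the map $\lambda \mapsto e_\lambda$ is $\bQ$-linear, so every $e_\lambda$ with $\lambda \in \rH^2(X,\bQ)$ lies in $\fg(X)$; extending scalars to $\bC$ yields $e_\eta \in \fg(X)\otimes_\bQ\bC$. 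Since $\fg(X)\otimes_\bQ \bC$ is a Lie subalgebra, the commutator $[\tau,\eta]=\eta'$ lies there as well.

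The genuine work has already been absorbed into the preceding lemmas, so I do not expect any real obstacle at this step: Lemma~\ref{lemma:tau-in-g} is where the clever $(\alpha\pm\sigma,h,\check\alpha\pm\check\sigma)$ trick (combined with the uniqueness in Proposition~\ref{prop:JM}) did the heavy lifting, and Lemma~\ref{lemma:degree-1-bracket} packaged the local coordinate identification of the Hochschild bracket with contraction. The only mild care needed in the corollary itself is the elementary linearity argument that upgrades cup products with Hard Lefschetz classes to cup products with arbitrary classes in $\rH^2(X,\bC)$.
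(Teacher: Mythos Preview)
Your argument is correct and matches the paper's proof essentially verbatim: write $\eta' = c_\tau(\eta)$, invoke Lemma~\ref{lemma:degree-1-bracket} to rewrite this as $[\tau,\eta]$, and conclude from Lemma~\ref{lemma:tau-in-g} together with the fact that cup product by any class in $\rH^2(X,\bC)$ lies in $\fg(X)\otimes_\bQ\bC$. The only addition you make is spelling out the linearity argument for this last fact, which the paper leaves implicit.
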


\begin{proof}
Every such $\eta'$ is of the form $c_\tau(\eta)$ for a unique $\eta \in \rH^1(\Omega^1_X)$, and hence the corollary follows from Lemma~\ref{lemma:degree-1-bracket}, Lemma~\ref{lemma:tau-in-g}, and the fact that $\eta$ lies in $\fg(X)\otimes_\bQ \bC$.
\end{proof}

We can now finish the comparison of the two Lie algebras.

\begin{proof}[Proof of Proposition~\ref{prop:comparison}] 
By Corollary~\ref{cor:abstract-comparison} it suffices to show that $\fg'(X)$ is contained in $\fg(X)\otimes_\bQ \bC$. By Proposition~\ref{prop:JM} it suffices to show that $h'$ is contained in $\fg(X)\otimes_\bQ \bC$, and that for almost every $a \in \rHH^2(X)$ we have that the action of $a$ on $\rH(X,\bC)$ is contained in $\fg(X)\otimes_\bQ \bC$. This follows from Lemma~\ref{lemma:tau-in-g}, Corollary~\ref{cor:h-in-g}, and Corollary~\ref{cor:eta-in-g}, and the fact that the action of any $\alpha \in \rH^2(\cO_X)$ lies in $\fg(X)\otimes_\bQ \bC$.
\end{proof}

Together with Proposition~\ref{prop:derived-invariance-HH-Lie}, this proves Theorem \ref{bigthm:Lie-invariant}.



\section{Rational cohomology of hyperk\"ahler varieties}

\subsection{The BBF form and the LLV Lie algebra}\label{sec:LL-so}

Let $X$ be a complex hyperk\"ahler variety of dimension $2d$. We denote by
\[
	b = b_X\colon \rH^2(X,\bQ) \times \rH^2(X,\bQ) \to \bQ
\]
its Beauville--Bogomolov--Fujiki, and by $c_X$ its Fujiki constant. These are related by
\begin{equation}\label{eq:fujiki-relation}
	\int_X \lambda^{2d} = \frac{(2d)!}{2^d d!} c_X b(\lambda,\lambda)^d
\end{equation}
for $\lambda \in \rH^2(X,\bQ)$. 

We extend $b$ to a bilinear form on
\[
	\widetilde\rH(X,\bQ) := \bQ \alpha \oplus \rH^2(X,\bQ) \oplus \bQ \beta,
\]
by declaring $\alpha$ and $\beta$ to be orthogonal to $\rH^2(X,\bQ)$, and setting $b(\alpha,\beta)=-1$, $b(\alpha,\alpha)=0$ and $b(\beta,\beta)=0$. We equip $\widetilde\rH(X,\bQ)$ with a grading satisfying $\deg \alpha =-2$, $\deg \beta =2$, and for which $\rH^2(X,\bQ)$ sits in degree $0$. This induces a grading on the Lie algebra $\fso(\widetilde\rH(X,\bQ))$. 

For $\lambda \in \rH^2(X,\bQ)$ we consider the endomorphism $e_\lambda \in \fso(\widetilde\rH(X,\bQ))$ given by $e_\lambda(\alpha) = \lambda$, $e_\lambda(\mu) = b(\lambda,\mu)\beta$ for all $\mu \in \rH^2(X,\bQ)$, and $e_\lambda(\beta)=0$.

\begin{theorem}[Looijenga--Lunts, Verbitsky]\label{thm:LLV}
There is a unique isomorphism of graded Lie algebras
\[
	\fso(\widetilde\rH(X,\bQ)) \longisomto \fg(X)
\]
that maps $e_\lambda$ to $e_\lambda$ for every $\lambda \in \rH^2(X,\bQ)$.
\end{theorem}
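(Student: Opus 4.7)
The plan is to match both Lie algebras degree by degree using the grading by $\ad H$ on $\fso(\widetilde\rH(X,\bQ))$ (with $H$ the degree operator on $\widetilde\rH(X,\bQ)$) and by $\ad h$ on $\fg(X)$. The graded structure of the source is purely linear-algebraic: since $\widetilde\rH(X,\bQ)$ sits in degrees $\{-2,0,2\}$, the Lie algebra is concentrated in Lie degrees $\{-2,0,2\}$, with $\fso(\widetilde\rH(X,\bQ))_{\pm 2}$ each canonically $\cong \rH^2(X,\bQ)$ and $\fso(\widetilde\rH(X,\bQ))_0 = \bQ H \oplus \fso(\rH^2(X,\bQ),b)$. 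A direct computation on operators records the key bracket
\[
[e_\lambda, f^{\mathrm{Mukai}}_\mu] = b(\lambda,\mu) H + R_{\lambda,\mu},
\]
with $f^{\mathrm{Mukai}}_\mu$ normalised so $(e_\mu, H, f^{\mathrm{Mukai}}_\mu)$ is an $\fsl_2$-triple and $R_{\lambda,\mu} \in \fso(\rH^2(X,\bQ),b)$ acting by $u \mapsto b(\lambda,u)\mu - b(\mu,u)\lambda$; since the $R_{\lambda,\mu}$ span $\fso(\rH^2(X,\bQ),b)$, the $e_\lambda$'s together with their Jacobson--Morozov partners generate $\fso(\widetilde\rH(X,\bQ))$. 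Because these partners are forced by Proposition~\ref{prop:JM}, the uniqueness part of the theorem already follows.

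I would then set up the parallel decomposition for $\fg(X)$. The degree $+2$ piece $\fg(X)_2$ is spanned by the $e_\lambda$'s. Proposition~\ref{prop:JM} produces $f_\lambda \in \fg(X)_{-2}$ for each $\lambda$ with the Hard Lefschetz property, and density through Hard Lefschetz pencils extends $\lambda \mapsto f_\lambda$ linearly to all of $\rH^2(X,\bQ)$; the element $h = [e_\lambda, f_\lambda]$ lies in $\fg(X)_0$. Higher-degree parts vanish: since cup products commute, $[e_\lambda, e_\mu] = 0$, and an iterated-bracket induction based on Jacobi shows $\fg(X)_n = 0$ for $n \geq 4$; the weight symmetry of the semisimple $\fg(X)$, afforded by the $\fsl_2$-triples $(e_\lambda, h, f_\lambda)$, then gives $\fg(X)_n = 0$ for $n \leq -4$ as well.

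The central and most delicate step is the analogous bracket identity
\[
[e_\lambda, f_\mu] = b(\lambda,\mu) h + \widetilde R_{\lambda,\mu} \in \fg(X)_0,
\]
where $\widetilde R_{\lambda,\mu}$ is the element of $\fg(X)_0$ whose restriction to $\rH^2(X,\bQ)$ equals $R_{\lambda,\mu}$. This is where hyperk\"ahler geometry enters essentially, via the Fujiki relation
\[
\int_X \gamma^{2d} = \frac{(2d)!}{2^d d!} b(\gamma,\gamma)^d, \qquad \gamma \in \rH^2(X,\bQ),
\]
and its polarisations, which express all top cup products of $\rH^2$-classes in terms of $b$. Combined with the primitive decomposition for a Hard Lefschetz $\mu$, one reads off the action of $f_\mu$ on $\rH^0 \oplus \rH^2$, evaluates the commutator there, and then propagates to $\rH(X,\bQ)$ using Verbitsky's irreducibility of $\SH(X,\bQ)$ as a $\fg(X)$-module (and by a separate argument on the transverse Verbitsky components). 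I expect this bracket identification to be the main technical obstacle.

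With the brackets matched, define $\phi \colon \fso(\widetilde\rH(X,\bQ)) \to \fg(X)$ on generators by $e_\lambda \mapsto e_\lambda$, $f^{\mathrm{Mukai}}_\lambda \mapsto f_\lambda$, and on the degree-zero $\fso$ piece by $R_{\lambda,\mu} \mapsto \widetilde R_{\lambda,\mu} = [e_\lambda,f_\mu] - b(\lambda,\mu) h$. The bracket identity makes $\phi$ a well-defined graded Lie algebra homomorphism; it is surjective because its image contains the generators of $\fg(X)$, and injective because $\fso(\widetilde\rH(X,\bQ))$ is simple whenever $b_2(X) + 2 \geq 5$, a condition satisfied by every known hyperk\"ahler variety.
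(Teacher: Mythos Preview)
The paper does not give a proof: the theorem is quoted as a result of Looijenga--Lunts and Verbitsky, and the ``proof'' consists solely of references to \cite{LooijengaLunts97} and \cite{Verbitsky96} for the statement over $\bR$, together with \cite{Soldatenkov} for the descent to $\bQ$. Your proposal is therefore not comparable to anything in the paper; rather, it is a sketch of how the original argument runs, and as such it tracks the Looijenga--Lunts strategy closely: pin the grading to $\{-2,0,2\}$ using commutativity of cup product and semisimplicity, identify $\fg(X)_{\pm2}$ with $\rH^2(X,\bQ)$, and compute $[e_\lambda,f_\mu]$ via the Fujiki relations.

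Two comments on the sketch itself. First, the hedge in your injectivity step is unnecessary: every hyperk\"ahler variety satisfies $b_2\geq 3$ (from $h^{2,0}=h^{0,2}=1$ and $h^{1,1}\geq 1$), so $\dim\widetilde\rH(X,\bQ)\geq 5$ and $\fso(\widetilde\rH(X,\bQ))$ is always simple --- there is no need to restrict to ``known'' examples. Second, the point you correctly flag as the main obstacle --- verifying the bracket identity as operators on all of $\rH(X,\bQ)$, not merely on low degrees or on $\SH(X,\bQ)$ --- is handled in \cite{LooijengaLunts97} without ever decomposing the representation into irreducibles. They argue internally to $\fg(X)$: the adjoint action of $\fg(X)_0$ on $\fg(X)_2\cong\rH^2(X,\bQ)$ is shown to preserve a canonical form, which for hyperk\"ahler $X$ is then identified with the BBF form, and this already forces the relations you need. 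That route bypasses your placeholder ``separate argument on the transverse Verbitsky components'' entirely.
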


\begin{proof}See \cite[Prop.~4.5]{LooijengaLunts97} or \cite[Thm.~1.4]{Verbitsky96} for the theorem over the real numbers. This readily descends to $\bQ$, see \cite[Prop.~2.9]{Soldatenkov} for more details. 
\end{proof}

The representation of $\fso(\widetilde\rH(X,\bQ))$ on $\rH(X,\bQ)$  integrates to a representation of 
$\Spin(\widetilde\rH(X,\bQ))$ on $\rH(X,\bQ)$. Let $\lambda \in \rH^2(X,\bQ)$. Then $e_\lambda$ is nilpotent, and hence $B_\lambda := \exp e_\lambda$ is an element of $\Spin(\widetilde\rH(X,\bQ))$. It acts on $\widetilde\rH(X,\bQ)$ as follows:
\[
	B_\lambda( r\alpha + \mu + s\beta) = 
	r\alpha + (\mu + r \lambda) + \big(s + b(\mu,\lambda) + r \tfrac{b(\lambda,\lambda)}{2}\big)\beta
\]
for all $r,s \in \bQ$ and $\mu \in \rH^2(X,\bQ)$. The action on the total cohomology of $X$ is given by:

\begin{proposition}\label{prop:exp-chern}
$B_\lambda$ acts as multiplication by $\ch(\lambda)$ on $\rH(X,\bQ)$. \qed
\end{proposition}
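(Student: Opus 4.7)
The strategy is simply to unwind the definition of $B_\lambda$ and use the identification provided by Theorem~\ref{thm:LLV}. First, I would note that under the Looijenga--Lunts--Verbitsky isomorphism $\fso(\widetilde\rH(X,\bQ)) \isomto \fg(X)$, the element $e_\lambda \in \fso(\widetilde\rH(X,\bQ))$ (defined by the explicit formulas on $\alpha$, $\mu \in \rH^2(X,\bQ)$, and $\beta$) is sent, by construction, to the endomorphism of $\rH(X,\bQ)$ given by cup product with $\lambda$. So $e_\lambda$ acts on $\rH(X,\bQ)$ as the nilpotent operator $L_\lambda\colon x \mapsto \lambda \cup x$.

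Next, since $\Spin(\widetilde\rH(X,\bQ))$ acts on $\rH(X,\bQ)$ by integrating the $\fso$-action, and since $L_\lambda$ is nilpotent, the element $B_\lambda = \exp(e_\lambda)$ of the Spin group acts on $\rH(X,\bQ)$ as the exponential of $L_\lambda$, computed in the associative algebra $\End(\rH(X,\bQ))$:
\[
 B_\lambda(x) = \exp(L_\lambda)(x) = \sum_{k\geq 0}\frac{1}{k!}L_\lambda^k(x).
\]
Because $\lambda$ has even degree, cup product with $\lambda$ is commutative in the relevant sense, and $L_\lambda^k(x) = \lambda^k \cup x$. Therefore
\[
 B_\lambda(x) = \Bigl(\sum_{k\geq 0}\frac{\lambda^k}{k!}\Bigr)\cup x = \exp(\lambda)\cup x = \ch(\lambda)\cup x,
\]
where the last equality is the definition of $\ch(\lambda) = \exp(\lambda)$ as a polynomial in $\lambda$.

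There is no real obstacle here: the only non-trivial input is Theorem~\ref{thm:LLV}, which has been proved/quoted above. The remaining ingredients are the general fact that for a nilpotent Lie algebra element acting on a finite-dimensional vector space, the group-theoretic exponential in the simply connected cover acts as the ordinary exponential of the operator, together with the elementary identity $\exp(L_\lambda) = L_{\exp(\lambda)}$ valid because $L_\lambda$ is (left) multiplication by $\lambda$ in a commutative graded ring.
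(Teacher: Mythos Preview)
Your argument is correct and is precisely the intended one: the paper gives no proof (the proposition carries a \qed), because the statement follows immediately from the definition $B_\lambda=\exp e_\lambda$ together with the fact, built into Theorem~\ref{thm:LLV}, that $e_\lambda$ acts on $\rH(X,\bQ)$ as cup product with $\lambda$. Your unwinding of $\exp(L_\lambda)(x)=\exp(\lambda)\cup x=\ch(\lambda)\cup x$ is exactly this observation made explicit.
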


In particular, if $\cL$ is a line bundle on $X$ and $\Phi\colon \cD X \to \cD X$ is the equivalence that maps $\cF$ to $\cF\otimes \cL$, then $\Phi^\rH=B_{c_1(\cL)}$.

\subsection{The Verbitsky component of cohomology}\label{subsec:regular-part}
Let $X$ be a complex hyperk\"ahler variety of dimension $2d$.
We define the \emph{even cohomology} of $X$ as the graded $\bQ$-algebra
\[
	\rH^\ev(X,\bQ) := \bigoplus_{n} \rH^{2n}(X,\bQ),
\]
and the \emph{Verbitsky component} of the cohomology of $X$ as the sub-$\bQ$-algebra $\SH(X,\bQ)$ of~$\rH^\ev(X,\bQ)$ generated by $\rH^2(X,\bQ)$. Clearly, $\SH(X,\bQ)[2d]$ is a sub-Lefschetz-module of $\rH^\ev(X,\bQ)[2d]$ for $\rH^2(X,\bQ)$.  

\begin{lemma}[Verbitsky \cite{Verbitsky95,Bogomolov96}]
The kernel of the $\bQ$-algebra homomorphism
\[
	\Sym^\bullet \rH^2(X,\bQ) \surjto \SH(X,\bQ)
\]
is generated by the elements $\lambda^{d+1}$ with $\lambda\in \rH^2(X,\bQ)$ satisfying
$b(\lambda,\lambda)=0$.\qed
\end{lemma}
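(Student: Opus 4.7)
The plan is to prove the two inclusions separately. The containment $\langle \lambda^{d+1}:b(\lambda,\lambda)=0\rangle \subseteq \ker$ is straightforward and follows from the Fujiki relation by polarization, while the reverse inclusion will require the representation theory of $\fso(\widetilde\rH(X,\bQ))$ on $\SH(X,\bQ)$.

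For the first inclusion, fix $\lambda\in\rH^2(X,\bQ)$ with $b(\lambda,\lambda)=0$ and an arbitrary $\mu \in \rH^2(X,\bQ)$. Substituting $\lambda+t\mu$ into the Fujiki relation of \S\ref{sec:LL-so} gives
\[
  \int_X (\lambda+t\mu)^{2d} = \frac{(2d)!}{2^d d!}\bigl(2tb(\lambda,\mu)+t^2b(\mu,\mu)\bigr)^d,
\]
which is divisible by $t^d$. Comparing the coefficient of $t^{d-1}$ on both sides yields $\int_X \lambda^{d+1}\mu^{d-1}=0$, and polarizing in $\mu$ gives $\int_X \lambda^{d+1}\cdot p = 0$ for every $p\in\SH^{2d-2}(X,\bQ)$. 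The LLV action of Theorem~\ref{thm:LLV} equips $\SH(X,\bQ)[2d]$ with a Lefschetz-module structure, which implies that the Poincar\'e pairing on $\rH(X,\bQ)$ restricts to a perfect pairing between $\SH^{2d+2}(X,\bQ)$ and $\SH^{2d-2}(X,\bQ)$; hence $\lambda^{d+1}=0$ in $\SH^{2d+2}(X,\bQ)$.

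For the reverse inclusion, set $R=\Sym^\bullet \rH^2(X,\bQ)$ and let $J\subseteq R$ be the ideal generated by the elements $\lambda^{d+1}$ with $b(\lambda,\lambda)=0$. We have a surjection $R/J\twoheadrightarrow \SH(X,\bQ)$ which we need to show is injective. The plan is to promote it to a morphism of $\fso(\widetilde\rH(X,\bQ))$-modules, exploiting that $\SH(X,\bQ)[2d]$ is, by Verbitsky's theorem together with Theorem~\ref{thm:LLV}, an irreducible $\fso(\widetilde\rH)$-module. The obvious $\fso(\rH^2,b)$-action on $R/J$ must be extended to an $\fso(\widetilde\rH)$-action: the raising generators $e_\lambda \in \fso(\widetilde\rH)$ act as multiplication by $\lambda$, while the lowering generators $f_\lambda$ are defined by the standard construction of $\fsl_2$-triples on a $\Sym V$-module using contraction against $b$. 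The relations $\lambda^{d+1}=0$ for isotropic $\lambda$ are precisely what is required for $f_\lambda$ to descend modulo $J$ and for the resulting operators to satisfy the $\fso$-commutation relations. Once the $\fso(\widetilde\rH)$-structure is in place the surjection $R/J\to\SH(X,\bQ)$ is equivariant, and since its image is nonzero the irreducibility of $\SH[2d]$ forces it to be an isomorphism.

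The main obstacle is constructing and verifying the $\fso(\widetilde\rH)$-structure on $R/J$, in particular showing that the natural candidates for the $f_\lambda$ operators respect $J$ and close up into the correct Lie algebra. A more pedestrian alternative is a direct dimension count: decompose $\Sym^n \rH^2(X,\bQ)$ into $\rO(\rH^2,b)$-isotypic components via the classical harmonic decomposition, identify $J^n$ as the sum of those isotypic components whose ``trace degree'' exceeds $d$, and match this with the $\fso(\rH^2,b)$-branching of the irreducible $\fso(\widetilde\rH)$-module of highest weight $d\omega_1$.
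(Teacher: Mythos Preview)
The paper does not prove this lemma; it is stated with references to Verbitsky and Bogomolov and a \qed. So there is nothing in the paper to compare your argument against directly.

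Your argument for the inclusion $J\subseteq\ker$ is correct. The appeal to a perfect pairing between $\SH^{2d+2}(X,\bQ)$ and $\SH^{2d-2}(X,\bQ)$ is justified: $\SH(X,\bQ)[2d]$ is an irreducible $\fg(X)$-submodule of $\rH(X,\bQ)[2d]$ (this is the paper's Lemma~\ref{lemma:verbitsky}, whose one-line proof does not use the present lemma), the Poincar\'e pairing is $\fg(X)$-invariant, and it is nonzero on $\SH$ since $\int_X\lambda^{2d}\neq 0$ for ample $\lambda$. Hence it is nondegenerate on each graded piece of $\SH$.

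The reverse inclusion, however, has a genuine gap. Even granting the $\fso(\widetilde\rH)$-structure on $R/J$ and the equivariance of the surjection $R/J\surjto\SH$, irreducibility of the \emph{target} $\SH$ does not force the map to be injective: any module having $\SH$ as a quotient passes that test. What you need is irreducibility of the \emph{source} $R/J$, so that a nonzero equivariant map out of it is automatically injective. Establishing that is essentially the whole content of the lemma --- one has to identify $R/J$ with the irreducible $\fso(\widetilde\rH)$-module $\ker(\Delta\colon\Sym^d\widetilde\rH\to\Sym^{d-2}\widetilde\rH)$, and this identification ultimately rests on a dimension comparison of exactly the kind you defer to the ``pedestrian alternative''. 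So the representation-theoretic shortcut, as written, does not actually bypass the hard step.

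A secondary issue: your description of the lowering operators $f_\lambda$ on $R/J$ as ``contraction against $b$'' is too vague to be checked. The naive derivation $\partial_\mu$ on $\Sym^\bullet\rH^2$ sends an isotropic $\lambda^{d+1}$ to $(d+1)\,b(\mu,\lambda)\,\lambda^d$, which is not in $J$ for generic $\mu$; so $J$ is not obviously stable under such operators, and the claim that the isotropic relations are ``precisely what is required for $f_\lambda$ to descend'' needs a real argument (or a different candidate for $f_\lambda$).
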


\begin{lemma}[Verbitsky]\label{lemma:verbitsky}
$\SH(X,\bQ)[2d]$ is an irreducible Lefschetz module.
\end{lemma}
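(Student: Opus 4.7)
The plan is to combine two observations: (i) $\SH(X,\bQ) = U(\fg(X)) \cdot 1$, where $1 \in \rH^0(X,\bQ)$; and (ii) the $h$-weight $-2d$ part of $\SH(X,\bQ)$ is one-dimensional. Together these force any $\fg(X)$-decomposition of $\SH(X,\bQ)$ to consist of a single irreducible summand.

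For (i), $\SH(X,\bQ)$ is by construction generated as a ring by $\rH^2(X,\bQ)$, and cup product with $\lambda \in \rH^2(X,\bQ)$ coincides with the action of $e_\lambda \in \fg(X)$; iterating these operators on $1$ already spans every graded piece of $\SH(X,\bQ)$.

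For (ii), Theorem~\ref{thm:LLV} identifies $\fg(X) \cong \fso(\widetilde\rH(X,\bQ))$, which is semisimple, so the finite-dimensional $\fg(X)$-module $\SH(X,\bQ)$ decomposes into irreducibles $\SH(X,\bQ) = \bigoplus_i V_i$. Each $V_i$ is stable under the grading element $h \in \fg(X)$, hence graded, and restricting the decomposition to the $h$-weight $-2d$ subspace gives $\rH^0(X,\bQ) = \bigoplus_i \bigl(V_i \cap \rH^0(X,\bQ)\bigr)$. Since $\rH^0(X,\bQ) = \bQ$ is one-dimensional, at most one summand $V_{i_0}$ meets $\rH^0(X,\bQ)$ nontrivially.

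To conclude, I would write $1 = \sum_i v_i$ with $v_i \in V_i$: graded equivariance of the projections forces $v_i \in V_i \cap \rH^0(X,\bQ)$, so $v_i = 0$ for $i \neq i_0$ and $v_{i_0} = 1$. Then $\SH(X,\bQ) = U(\fg(X)) \cdot 1 = U(\fg(X)) \cdot v_{i_0} \subset V_{i_0}$, which forces $\SH(X,\bQ) = V_{i_0}$ to be irreducible. The whole argument is largely formal once the two ingredients are in place; the main point is that ring-generation of $\SH(X,\bQ)$ by the single graded vector $1$ combined with the one-dimensionality of $\rH^0(X,\bQ)$ suffices, without having to enter into the detailed root structure of $\fso(\widetilde\rH(X,\bQ))$ or identify an explicit lowest-weight vector.
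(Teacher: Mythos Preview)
Your proof is correct and follows essentially the same approach as the paper's one-line argument: both rest on the observation that $\SH(X,\bQ)$ is the cyclic $\fg(X)$-module generated by $1\in\rH^0(X,\bQ)$, together with the one-dimensionality of the degree $-2d$ piece and semisimplicity of $\fg(X)$. The paper phrases this as ``smallest sub-Lefschetz module with non-trivial degree $-2d$ component'', which is exactly the conjunction of your (i) and (ii); you have simply unpacked the implication to irreducibility more explicitly. One minor remark: you invoke Theorem~\ref{thm:LLV} to obtain semisimplicity, but the weaker Proposition from \cite[1.6,~1.9]{LooijengaLunts97} (stated just after the definition of $\fg(X)$) already suffices.
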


\begin{proof}It is the smallest sub-Lefschetz module of $\rH^\ev(X,\bQ)[2d]$ having a non-trivial component of degree $-2d$. 
\end{proof}

Verbitsky also describes the space $\SH(X,\bQ)$ explicitly. Below we normalise this description, and use it to compute the Mukai pairing on $\SH(X,\bQ)$.

\begin{proposition}\label{prop:psi}
There is a unique map
\[
	\Psi\colon \SH(X,\bQ)[2d] \longto \Sym^d \widetilde\rH(X,\bQ)
\]
satisfying 
\begin{enumerate}
\item $\Psi$ is morphism of Lefschetz modules
\item $\Psi(1) = \alpha^d/d!$
\end{enumerate}
\end{proposition}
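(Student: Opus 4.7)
Uniqueness is immediate from Lemma~\ref{lemma:verbitsky}: the difference of two maps satisfying (i)--(ii) is a Lefschetz-equivariant map whose kernel contains $1$, hence by irreducibility of $\SH(X,\bQ)[2d]$ equals all of $\SH(X,\bQ)[2d]$.

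For existence, I would construct $\Psi$ from Verbitsky's presentation $\SH(X,\bQ) = \Sym^\bullet\rH^2(X,\bQ)/(\lambda^{d+1} : b(\lambda,\lambda)=0)$. Since the operators $e_\lambda$, for $\lambda \in \rH^2(X,\bQ)$, span the abelian degree-$2$ piece of $\fso(\widetilde\rH(X,\bQ))$, there is a well-defined linear map
\[
	\widetilde\Psi \colon \Sym^\bullet \rH^2(X,\bQ) \longto \Sym^d \widetilde\rH(X,\bQ),\qquad \lambda_1\cdots\lambda_k \longmapsto e_{\lambda_1}\cdots e_{\lambda_k}(\alpha^d/d!),
\]
where each $e_\lambda$ acts on $\Sym^\bullet\widetilde\rH(X,\bQ)$ as the natural derivation. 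The formulas $e_\lambda(\alpha)=\lambda$, $e_\lambda(\lambda) = b(\lambda,\lambda)\beta$, $e_\lambda(\beta)=0$ immediately yield $e_\lambda^{d+1}(\alpha^d/d!)=0$ when $b(\lambda,\lambda)=0$, so $\widetilde\Psi$ descends to a map $\Psi\colon \SH(X,\bQ)[2d] \to \Sym^d\widetilde\rH(X,\bQ)$. The normalization (ii) is built in, and $\Psi$ preserves the Lefschetz grading since both $1$ and $\alpha^d/d!$ sit in degree $-2d$ and the $e_\lambda$'s raise degree by $2$ on both sides.

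The main remaining obstacle is to upgrade the manifest $e_\lambda$-equivariance of $\Psi$ to full Lefschetz-equivariance, which by Theorem~\ref{thm:LLV} amounts to $\fso(\widetilde\rH(X,\bQ))$-equivariance. My plan is to work with the diagonal vector $(1,\alpha^d/d!) \in \SH(X,\bQ)[2d]\oplus\Sym^d\widetilde\rH(X,\bQ)$. Both entries lie in the minimal Lefschetz-degree $-2d$ of their respective modules, and are therefore killed by every $f_\mu$ in the degree-$(-2)$ part $\fso^{-2}$; moreover, the degree-$0$ subalgebra $\fso^0 = \bQ h \oplus \fso(\rH^2(X,\bQ))$ acts on both components by the same scalar ($h$ as $-2d$, and the semisimple subalgebra $\fso(\rH^2(X,\bQ))$ trivially on each one-dimensional subspace $\bQ\cdot 1$ and $\bQ\cdot\alpha^d$, since any semisimple Lie algebra acts trivially on a one-dimensional representation). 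By PBW applied to the triangular decomposition $\fso(\widetilde\rH)=\fso^{+2}\oplus\fso^0\oplus\fso^{-2}$, the cyclic $\fso(\widetilde\rH)$-submodule generated by $(1,\alpha^d/d!)$ therefore coincides with $U(\fso^{+2})\cdot(1,\alpha^d/d!)$, which by the construction of $\Psi$ is exactly the graph of $\Psi$. Hence this graph is $\fso(\widetilde\rH)$-stable, i.e., $\Psi$ is $\fso(\widetilde\rH)$-equivariant, completing (i).
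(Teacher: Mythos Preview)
Your argument is correct and follows the same construction as the paper: define $\widetilde\Psi(\lambda_1\cdots\lambda_k)=e_{\lambda_1}\cdots e_{\lambda_k}(\alpha^d/d!)$ and check that it factors through Verbitsky's presentation. Two minor differences are worth noting. For the factoring, the paper proves the stronger statement that $e_\lambda^{d+1}$ annihilates \emph{every} monomial in $\Sym^d\widetilde\rH(X,\bQ)$ when $b(\lambda,\lambda)=0$, whereas you only check this on $\alpha^d/d!$; your version suffices because the $e_\lambda$'s commute, so $\widetilde\Psi(\mu_1\cdots\mu_k\lambda^{d+1})=e_{\mu_1}\cdots e_{\mu_k}\,e_\lambda^{d+1}(\alpha^d/d!)=0$. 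Conversely, for the Lefschetz-equivariance the paper simply asserts this ``by construction'', while your PBW/graph argument (lowest-weight vector killed by $\fso^{-2}$, scaled by $\fso^0$, hence the cyclic $\fso$-module equals $U(\fso^{+2})\cdot(1,\alpha^d/d!)$) makes the passage from $e_\lambda$-equivariance to full $\fso(\widetilde\rH)$-equivariance explicit, which is a genuine clarification of a step the paper leaves to the reader.
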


Note that the Lefschetz module structure on $\Sym^d \widetilde\rH(X,\bQ)$ is given by the Leibniz rule
\[
	e_\lambda(x_1\cdots x_d) := \sum_i x_1\cdots e_\lambda(x_i) \cdots x_d.
\]

\begin{proof}[Proof of Proposition~\ref{prop:psi}]
Uniqueness is clear. For existence, consider the map
\[
	\widetilde\Psi\colon \Sym^\bullet \rH^2(X,\bQ) \longto \Sym^d \widetilde\rH(X,\bQ),
\]
given by
\[
	\lambda_1\cdots \lambda_n \mapsto e_{\lambda_1} \cdots e_{\lambda_n}(\alpha^d/d!).
\]
This map is well-defined since the $e_{\lambda_i}$ commute, and by construction it is a morphism of Lefschetz modules satisfying $\Psi(1)=\alpha^d/d!$. It suffices to show that $\widetilde{\Psi}$ vanishes on the ideal generated by the $\lambda^{d+1}$ for $\lambda \in \rH^2(X,\bQ)$ satisfying $b(\lambda,\lambda)=0$. Equivalently, it suffices to show that for every $x\in \Sym^d \widetilde\rH(X,\bQ)$ and for every $\lambda \in \rH^2(X,\bQ)$ with $b(\lambda,\lambda)=0$ we have $e_\lambda^{d+1}(x)=0$. 

Without loss of generality, we may assume that $x$ is a monomial of the form
\[
	x= \alpha^i\beta^j \lambda_1\cdots \lambda_m,\quad i+j+m=d,\, \lambda_i \in \rH^2(X,\bQ).
\]
By degree reasons, we have $e_\lambda^k(\beta^j \lambda_1\cdots \lambda_m)=0$ for $k>m$. Moreover, it follows from $b(\lambda,\lambda)=0$ that $e_\lambda^k(\alpha^i)=0$ for $k>i$. Combining these, one concludes that $e_\lambda^{d+1}(x)=0$, which is what we had to prove.
\end{proof}

\begin{lemma}$\Psi(\pt_X) = \beta^d/c_X$.
\end{lemma}
\begin{proof}
Choose $\lambda \in \rH^2(X,\bQ)$ with $b(\lambda,\lambda)\neq 0$. Then we have
\begin{equation}\label{eq:Psi-pt}
	\Psi(\lambda^{2d})=e_\lambda^{2d}(\frac{\alpha^d}{d!}) = 
	\frac{(2d)!}{2^d d! } b(\lambda,\lambda)^d \beta^d.
\end{equation}
Dividing by  (\ref{eq:fujiki-relation}) gives the claimed identity.  
\end{proof}

Consider the contraction (or Laplacian) operator
\[
	\Delta\colon \Sym^d \widetilde\rH(X,\bQ) \to \Sym^{d-2}  \widetilde\rH(X,\bQ),
\]
given by
\[
	x_1\ldots x_d \mapsto \sum_{i<j} b(x_i,x_j) x_1\cdots \hat{x}_i \cdots \hat{x}_j \cdots x_d.
\]
This is a morphism of Lefschetz modules, or equivalently of $\fso(\widetilde{\rH}(X,\bQ))$-modules.

\begin{lemma}\label{lemma:exact-sequence}
The sequence of Lefschetz modules
\[
	0 \longto \SH(X,\bQ)[2d] \overset{\Psi}{\longto}  \Sym^d \widetilde\rH(X,\bQ)  \overset{\Delta}{\longto}
	\Sym^{d-2}  \widetilde\rH(X,\bQ) \longto 0
\]
is exact.
\end{lemma}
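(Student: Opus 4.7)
The plan is to identify the sequence with the classical harmonic-polynomial decomposition for the $\fso(\widetilde\rH(X,\bQ))$-module $\Sym^d \widetilde\rH(X,\bQ)$, using Theorem~\ref{thm:LLV} to translate freely between ``Lefschetz module'' and ``$\fso(\widetilde\rH(X,\bQ))$-module''. A preliminary observation is that, since $\Delta$ is contraction with the $\fso$-invariant form $b$, it is automatically $\fso$-equivariant, hence a morphism of Lefschetz modules.

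First I would prove injectivity of $\Psi$: by Lemma~\ref{lemma:verbitsky}, $\SH(X,\bQ)[2d]$ is an irreducible Lefschetz module, and $\Psi(1)=\alpha^d/d! \neq 0$, so $\Psi$ is nonzero and hence injective. Next, $\im \Psi \subseteq \ker \Delta$: since $\SH(X,\bQ)$ is generated as an algebra by $\rH^2(X,\bQ)$, it is generated as a Lefschetz module by $1 \in \rH^0(X,\bQ)$ under the operators $e_\lambda$; since both $\Psi$ and $\Delta$ are Lefschetz-equivariant, it suffices to check that $\Delta \Psi(1)=0$. This is immediate:
\[
	d!\, \Delta(\Psi(1)) = \Delta(\alpha^d) = \binom{d}{2} b(\alpha,\alpha)\, \alpha^{d-2} = 0,
\]
because $b(\alpha,\alpha)=0$ by construction of $\widetilde\rH(X,\bQ)$.

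The remaining two assertions --- surjectivity of $\Delta$ and the equality $\im \Psi = \ker \Delta$ --- I would deduce from two classical facts about $\Sym^d V$ for $(V,b)$ a nondegenerate quadratic space with $\dim V \geq 3$, applied to $V=\widetilde\rH(X,\bC)$: (i) the harmonic decomposition
\[
	\Sym^d V = \ker \Delta \,\oplus\, q \cdot \Sym^{d-2} V,
\]
where $q \in \Sym^2 V$ represents $b$, together with the identity $\Delta(q\cdot p) = c\, p + q\cdot \Delta(p)$ for a nonzero scalar $c=c(d,\dim V)$; and (ii) the irreducibility of $\ker \Delta$ as an $\fso(V)$-module. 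Both apply since $\dim \widetilde\rH(X,\bC) = b_2(X) + 2 \geq 5$. Iterating (i) together with the displayed identity yields surjectivity of $\Delta$; and (ii), combined with the fact that $\im(\Psi\otimes_\bQ \bC) \subseteq \ker \Delta$ is a nonzero $\fso(V)$-submodule, forces $\im(\Psi\otimes_\bQ \bC) = \ker \Delta$, which descends to $\im \Psi = \ker \Delta$ over $\bQ$. The main non-formal input --- really the only one --- is the classical irreducibility of spherical harmonics as an $\fso(V)$-module; everything else is a direct computation or an application of Lemma~\ref{lemma:verbitsky}.
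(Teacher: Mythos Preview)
Your proposal is correct and follows essentially the same approach as the paper's own proof: both check $\Delta\Psi(1)=0$ on the generator, invoke the classical fact that $\Delta$ is surjective with irreducible kernel (the spherical harmonics), and use the irreducibility of $\SH(X,\bQ)$ from Lemma~\ref{lemma:verbitsky} to conclude. Your write-up is simply more explicit about the harmonic decomposition and about passing to $\bC$ for the irreducibility step, whereas the paper dispatches all of this in one sentence as ``well-known''.
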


\begin{proof}Since $\Delta\Psi(1)=0$, we have $\Delta\circ \Psi=0$. The map $\Delta$ is well-known to be a surjective map of $\fso(\widetilde\rH(X,\bQ))$-modules with irreducible kernel. Since $\Psi$ is non-zero and $\SH(X,\bQ)$ is irreducible, it follows that the sequence is exact. 
\end{proof}

The \emph{Mukai pairing} \cite{Caldararu03} on $\rH^\ev(X,\bQ)$ restricts to a pairing $b_\SH$ on $\SH(X,\bQ)$. It pairs elements of degree $m$ with elements of degree $2d-m$, according to the formula
\[
	b_\SH(\lambda_1\cdots \lambda_m, \,\mu_1\cdots \mu_{2d-m} ) =
	(-1)^m \int_X \lambda_1 \cdots \lambda_m \mu_1\cdots \mu_{2d-m}.
\]
Note that $b_\SH(e_\lambda x,y) + b_\SH(x,e_\lambda y)=0$ for all $x,y \in \SH(X,\bQ)$ and $\lambda \in \rH^2(X,\bQ)$, so that $b_\SH$ is $\fso(\widetilde\rH(X,\bQ))$-invariant.

The pairing on $\widetilde\rH(X,\bQ)$ induces a pairing on $\Sym^d \widetilde\rH(X,\bQ)$ defined by
\[
	b_{[d]}(x_1\cdots x_d, y_1\cdots y_d) := 
	(-1)^d\sum_{\sigma \in \fS_d} \prod_i b(x_i,y_{\sigma i}).
\]
By construction, $b_{[d]}$ is $\fso(\widetilde\rH(X,\bQ))$-invariant. The map $\Psi$ is almost
an isometry, in the following sense.

\begin{proposition}\label{prop:bbf-on-sh}
For all $x,y \in \SH(X,\bQ)$ we have
\[
	b_{[d]}(\Psi x, \Psi y) = \frac{2^d d!}{c_X (2d)!} b_{\SH}(x,y).
\]
\end{proposition}

\begin{proof}
Both the Mukai form on $\SH(X,\bQ)[2d]$ and the pairing on $\Sym^d \widetilde\rH(X,\bQ)$ are $\fso(\widetilde\rH(X,\bQ))$-invariant. Since $\SH(X,\bQ)$ is an irreducible $\fso(\widetilde\rH(X,\bQ))$-module, it suffices to verify the identity for some $x,y\in \SH(X,\bQ)$ with $b_{\SH}(x,y)\neq 0$.

Let $\lambda \in \rH^2(X,\bQ)$ with $b(\lambda,\lambda)\neq 0$. We have
\[
	b_\SH(1,\lambda^{2d}) = \int_X \lambda^{2d} =  c_X b(\lambda,\lambda)^d \neq 0.
\]	
By (\ref{eq:Psi-pt}) we have
\[
	\Psi(\lambda^{2d})= 
	\frac{(2d)!}{2^d d! } b(\lambda,\lambda)^d \beta^d,
\]
and hence
\[
	b_{[d]}(\Psi(1),\Psi(\lambda^{2d})) = \frac{(2d)!}{ 2^d(d!)^2} b(\lambda,\lambda)^d b_{[d]}(\alpha^d,\beta^d)
	=   \frac{(2d)!}{2^d d!} b(\lambda,\lambda)^d,
\]
which agrees with the identity claimed in the proposition.
\end{proof}

\begin{remark}
If $X$ is of type $\mathrm{K3}^{[d]}$ then $c_X = (2d)!/(2^d d!)$ and $\Psi$ is an isometry.
\end{remark}

\section{Action of derived equivalences on the Verbitsky component}

In this section we prove Theorems~\ref{bigthm:hyperkahler-subring} and  \ref{bigthm:normalizer} from the introduction.

\subsection{A representation-theoretical construction}
Let $K$ be a field of characteristic different from $2$. Let $V=(V,b)$ be a non-zero quadratic space over $K$. Let $d$ be a positive integer and consider the space
\[
	S_{[d]} V := \ker (\Sym^d V \overset{\Delta}{\longto} \Sym^{d-2} V).
\]
The Lie algebra $\fso(V)$ acts faithfully on $S_{[d]} V$, inducing an inclusion $\fso(V) \subset \End(S_{d}V)$. Consider the normalizer of $\fso(V)$ in $\GL(S_{[d]} V)$, that is, the group
\[
	N(V,d) := \big\{\, g\in \GL(S_{[d]} V) \,\mid\, 
	g \fso(V) g^{-1} = \fso(V)  \big\}.
\]

\begin{proposition}\label{prop:normalizer-computation}
There is an exact sequence
\[
 	1 \longto \{\pm 1\} \longto \rO(V) \times K^\times \longto N(V,d) \longto 1
\]
where the inclusion maps $\epsilon$ to $(\epsilon, \epsilon^d)$ and the
surjection maps $(\varphi,\lambda)$ to $\lambda S_{[d]}(\varphi)$.
\end{proposition}

\begin{proof}
The only non-trivial part is surjectivity of $\rO(V)\times K^\times \to N(V,d)$. Denote by $\varphi\colon \rO(V)\to N(V,d)$ the restriction of this map to the first component.

The representation $S_{[d]} V$ of $\fso(V)$ is irreducible, so by  Schur's lemma the centralizer of $\fso(V)$ in $\GL(S_{[d]} V)$ is $K^\times$, and we have an exact sequence
\[
	1 \longto K^\times \longto N(V,d) \overset{\psi}\longto \Aut(\fso(V)).
\]
It therefore suffices to show that the image of $\psi$ equals the image of $\psi\circ \varphi$.

The adjoint group of $\fso(V)$ is $\SO(V)$, and we have a short exact sequence
\[
	1 \longto \SO(V) \longto \Aut(\fso(V)) \longto \Out(\fso(V)) \longto 1,
\]
where $\Out(\fso(V))$ coincides with the group of symmetries of the Dynkin diagram. 


If $\dim V=2n+1$, then we have $\rO(V) = \{\pm 1\} \times \SO(V)$. The Dynkin diagram (of type $B_n$) has no non-trivial automorphisms, so
$\Aut(\fso(V,b)) = \SO(V)$. The composition $\psi\circ \varphi$  maps $\SO(V)$ identically to $\SO(V)$, and we conclude that the image of $\psi$ is the image of $\psi\circ \varphi$.

If $m=2n$, then  we have $\rO(V)/\{\pm 1\} \subset \Aut \fso(V)$, with elements of determinant $-1$ inducing the reflection in the horizontal axis in the Dynkin diagram (of type $D_n$). For $n\neq 4$, this inclusion is an equality, while for $n=4$ `triality' gives extra automorphisms. However, expressed on simple roots the highest weight of the representation $S_{[d]} V$ of $\fso(V)$ is
\[
  \begin{tikzpicture}[scale=.8]
    \foreach \x in {0,...,2}
    \draw (-0.5,0) circle (.1) node[anchor=south] {$d$};
    \draw (.5,0) circle (.1) node[anchor=south] {$d$};
    \draw (2,0) circle (.1) node[anchor=south] {$d$};
    \draw (3,0) circle (.1) node[anchor=south] {$d$};
    \draw (3.9,0.6) circle (.1) node[anchor=south] {$d/2$};
    \draw (3.9,-0.6) circle (.1) node[anchor=south] {$d/2$};
    \draw[thick] (-0.4,0) -- (0.4,0);
    \draw[thick,dashed] (0.65,0) -- (1.9,0);
    \draw[thick] (2.1,0) -- (2.9,0);
    \draw[thick] (3.09,0.06) -- (3.81,0.54);
    \draw[thick] (3.09,-0.06) -- (3.81,-0.54);
  \end{tikzpicture}
\]
so that for $n=4$ the extra automorphisms of $\fso(V)$ do not lift to automorphisms of $S_{[d]} V$. We conclude that the image of $\psi$ is contained in $\rO(V)/\{\pm 1\}$ and that the composition $\psi\circ \varphi$ is the natural map $\rO(V)\to \rO(V)/\{\pm 1\}$, so that also in this case the image of 
$\psi$ coincides with the image of $\psi\circ \varphi$.
\end{proof}

\begin{proposition}\label{prop:twist-similitude}
Let $V_1$ and $V_2$ be non-zero quadratic spaces. Assume that there is a linear map
$f\colon S_{[d]} V_1 \to S_{[d]} V_2$ such that $f \fso(V_1) f^{-1} = \fso(V_2)$ as subspaces of
 $\End(V_2)$. Then there exists a  $\mu \in K^\times$ and a similitude $\varphi\colon V_1 \to V_2$ such that $f=\mu S_{[d]}(\varphi)$.
\end{proposition}

\begin{proof}
Let $\bar K$ be a separable closure of $K$. After base change to $\bar K$ the quadratic spaces $V_1$ and $V_2$ become isometric, hence
$V_2$ determines a class $\gamma$ in the  Galois cohomology group $\rH^1(\Gal(\bar K/K), \rO(V_{\bar K})$. The existence of $f$ shows
that $\gamma$ is mapped to the trivial element under  the natural map
\[
	\theta\colon \rH^1\big(\Gal(\bar K/K),\, \rO(V_{1,\bar K}) \big)
	\longto
	\rH^1\big(\Gal(\bar K/K),\,N(V_{1,\bar K}, d) \big).
\]
By Proposition~\ref{prop:normalizer-computation} (and Hilbert 90), this shows that
$\gamma$ is in the image of the map
\[
	K^\times/(K^\times)^2 = \rH^1(\Gal(\bar{K}/K), \{\pm 1\} ) \to \rH^1\big(\Gal(\bar K/K),\, \rO(V_{1,\bar K}) \big)
\]
induced by $\{\pm 1\} \to \rO(V_{1,\bar K})$. But his means $(V_1,\lambda  b_1)$ and
$(V_2,b_2)$ are isomorphic for any representative $\lambda$ of a class in  $K^\times/(K^\times)^2$ mapping to $\gamma$.

In particular, there exists a similitude $\varphi_0\colon V_1 \to V_2$. The map $f^{-1} \circ S_{[d]}(\varphi_0)$ is an element of $N(V_1,d)$. Now let $(\psi,\mu) \in \rO(V_1) \times K^\times$ be a pre-image of this element under the map from Proposition~\ref{prop:normalizer-computation}, and set
$\varphi := \varphi_0 \circ \psi^{-1}$. Then the pair $(\mu,\varphi)$ satisfies the requirements.
\end{proof}

The bilinear form $b$ on $V$ induces a bilinear form $b_{[d]}$ on $S_{[d]} V$ defined as
\[
	b_{[d]}(x_1\cdots x_d,\, y_1\cdots y_d) := (-1)^d
	\sum_{\sigma \in S_n} \prod_i b(x_i,y_{\sigma i}),
\]
Consider the group
\[
	G(V,b,d) := N(V,d) \cap \rO(S_{[d]}, b_{[d]})
\]
of isometries of $S_{[d]} V$ that preserve the subspace $\fso(V)$ of $\End S_{[d]} V$.

\begin{proposition}\label{prop:functor-aut}Let $V=(V,b)$ be an object of $\cB$. We have
\[
	G(V,b,d) = \begin{cases}
		\rO(V,b) & \text{ $d$ odd, } \\
		\rO(V,b)/\{\pm 1\} \times \{\pm1\}  & \text{ $d$ even.}
	\end{cases}
\]
\end{proposition}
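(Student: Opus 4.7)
The plan is to exhibit a natural surjection onto $\Aut_\cC(S_{[d]} V)$ from $\rO(V,b) \times \{\pm 1\}$ and identify its kernel according to the parity of $d$.

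First, I would construct a homomorphism
\[
	\Theta \colon \rO(V,b) \times \{\pm 1\} \longto \Aut_\cC(S_{[d]} V), \quad (g,\epsilon) \mapsto \epsilon \cdot S_{[d]}(g),
\]
where $S_{[d]}(g)$ denotes the restriction to $S_{[d]} V$ of the natural action of $g$ on $\Sym^d V$. That the image lies in $\Aut_\cC$ is straightforward: the action of $\rO(V,b)$ preserves $b_{[d]}$ by construction, and conjugation of $\fso(V,b) \subset \End(S_{[d]} V)$ by $\rO(V,b)$ is the adjoint action, hence normalizes $\fso(V,b)$; scalars preserve everything.

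Next I would identify the kernel. If $\Theta(g,\epsilon) = \id$, then $g$ acts by the scalar $\epsilon$ on $S_{[d]} V$. Since $S_{[d]} V$ is irreducible as an $\fso(V,b)$-module, Schur's lemma (after base change to $\overline K$ if needed) forces $g$ to centralize $\fso(V,b)$, hence $g = \pm 1_V \in \rO(V,b)$; such $g$ acts on $S_{[d]} V$ by $(\pm 1)^d$. Thus
\[
	\ker \Theta = \{(1,1),\, (-1, (-1)^d)\}.
\]
Quotienting gives precisely $\rO(V,b)$ for $d$ odd and $(\rO(V,b)/\{\pm 1\}) \times \{\pm 1\}$ for $d$ even.

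The main work is surjectivity of $\Theta$. Given $\phi \in \Aut_\cC(S_{[d]} V)$, conjugation by $\phi$ defines an automorphism $\alpha \in \Aut(\fso(V,b))$. The intertwining property means that $\phi$ realizes an isomorphism of $\fso(V,b)$-modules $S_{[d]} V \isomto (S_{[d]} V)^{\alpha}$, where the superscript denotes the twist by $\alpha$. Now $S_{[d]} V$ is the irreducible representation of highest weight $d\omega_1$, with $\omega_1$ the fundamental weight associated to the vector representation; the twist has highest weight $\alpha^\ast(d\omega_1)$. Hence $\alpha$ must fix $\omega_1$. From the classification of $\Out(\fso(V,b))$, the automorphisms fixing $\omega_1$ are exactly those induced by $\rO(V,b)/\SO(V,b)$: for type $B_n$ this is automatic since $\Out = 1$, for $D_n$ with $n \neq 4$ the non-trivial diagram automorphism fixes $\omega_1$, and in the $D_4$ case the triality automorphisms permute $\omega_1, \omega_3, \omega_4$, so the stabilizer of $\omega_1$ is precisely the $\bZ/2$ coming from $\rO/\SO$. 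In every case we find $g \in \rO(V,b)$ with $\Int(g)|_{\fso} = \alpha$, and then $\phi \circ S_{[d]}(g)^{-1}$ centralizes $\fso(V,b)$ on the irreducible $S_{[d]} V$, so it is a scalar; being an isometry of $b_{[d]}$, the scalar is $\pm 1$, which places $\phi$ in the image of $\Theta$.

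The main obstacle is the step identifying $\alpha$ with an element of $\rO(V,b)$, specifically handling the triality phenomenon for $D_4$. My plan sidesteps it by phrasing things in terms of which diagram automorphisms preserve the highest weight $\omega_1$ of the vector representation, rather than invoking the full outer automorphism group.
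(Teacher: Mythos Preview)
Your proposal is correct and follows essentially the same approach as the paper: both arguments use Schur's lemma together with the bilinear form to pin down the $\pm 1$ ambiguity, map $\rO(V,b)$ naturally into $\Aut_\cC$, and determine the remaining factor by analyzing which automorphisms of $\fso(V,b)$ can arise via the Dynkin-diagram classification and the highest weight of $S_{[d]} V$, with the same treatment of the $D_4$ triality case. Your packaging as a surjection $\Theta\colon \rO(V,b)\times\{\pm 1\}\to \Aut_\cC$ with an explicit kernel is only a minor reorganization of the paper's exact-sequence argument.
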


\begin{proof}
This follows immediately from Proposition~\ref{prop:normalizer-computation}.
\end{proof}

\subsection{The Verbitsky component}\label{subsec:verbitsky-component}

\begin{theorem}\label{thm:hyperkahler-subring}
Let $X_1$ and $X_2$ be hyperk\"ahler varieties and $\Phi\colon \cD X_1 \to \cD X_2$ an equivalence. Then the induced isomorphism $\Phi^{\rH} \colon \rH(X_1,\bQ) \to \rH(X_2,\bQ)$ restricts  to an isomorphism $\Phi^{\SH}\colon \SH(X_1,\bQ) \to \SH(X_2,\bQ)$. Moreover
\begin{enumerate}
\item $\Phi^{\SH}$ is an isometry with respect to the Mukai pairings
\item $\Phi^{\SH} \fg(X_1) (\Phi^{\SH})^{-1} = \fg(X_2)$ in $\End(\SH(X_2,\bQ))$.
\end{enumerate}
\end{theorem}

\begin{proof}Note that $\SH(X,\bQ)$ can be characterized as the minimal sub-$\fg(X)$-module of $\rH(X,\bQ)$ whose Hodge structure attains the maximal possible level (width). It then follows from
Theorem \ref{bigthm:Lie-invariant} and from Lemma~\ref{lemma:verbitsky} that $\Phi^{\rH}$ restricts to an isomorphism
\[
	\Phi^{\SH} \colon \SH(X_1,\bQ) \isomto \SH(X_2,\bQ)
\]
respecting the Lie algebras $\fg(X_1)$ and $\fg(X_2)$. By \cite{Caldararu03}, the map $\Phi^\rH$ respects the Mukai pairings, and the theorem follows.
\end{proof}

\begin{definition}\label{def:weight-zero-HS}
For a complex hyperk\"ahler variety we equip $\SH(X,\bQ)$ and $\widetilde\rH(X,\bQ)$ with Hodge structures of weight $0$, given by
\[
	\SH(X,\bQ) \subset \rH^\ev(X,\bQ) = \oplus_n \rH^{2n}(X,\bQ(n))
\]
and
\[
	\widetilde\rH(X,\bQ) = \bQ \alpha \oplus \rH^2(X,\bQ(1)) \oplus \bQ \beta.
\]
\end{definition}

\begin{lemma}Let $X$ be a hyperk\"ahler variety of dimension $2d$. Then the map
\[
	\Psi\colon \SH(X,\bQ) \to \Sym^d \widetilde\rH(X,\bQ)
\]
of Proposition~\ref{prop:psi} is a morphism of Hodge structures of weight~$0$.
\end{lemma}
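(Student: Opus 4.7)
The plan is to verify directly that the explicit formula for $\Psi$ is compatible with Hodge bigradings once everything is extended to $\bC$. Recall that $\Psi$ is the unique linear map satisfying $\Psi(1) = \alpha^d/d!$ and $\Psi(\lambda_1\cdots\lambda_n) = e_{\lambda_1}\cdots e_{\lambda_n}(\alpha^d/d!)$ for $\lambda_i \in \rH^2(X,\bQ)$. Both source and target are equipped with weight-zero Hodge structures (Definition~\ref{def:weight-zero-HS}), and the Hodge structure on $\Sym^d \widetilde\rH(X,\bQ)$ is the one induced from the given weight-zero structure on $\widetilde\rH(X,\bQ)$, where $\alpha,\beta$ are purely of type $(0,0)$ and the Hodge decomposition of $\rH^2(X,\bQ(1))$ is the Tate twist of the usual one on $\rH^2(X)$.

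First I would observe that the Beauville--Bogomolov--Fujiki form
\[
b\colon \rH^2(X,\bQ(1)) \otimes \rH^2(X,\bQ(1)) \longto \bQ
\]
is a morphism of weight-zero Hodge structures, that is, $b$ has Hodge type $(0,0)$ in the sense that $b$ pairs $(p,q)$ with $(-p,-q)$ nontrivially and kills all other pairs. Given this, a direct inspection of the definition of $e_\lambda \in \fso(\widetilde\rH(X,\bC))$ shows that if $\lambda$ is of pure Hodge type $(p,q)$, then $e_\lambda$ is of bidegree $(p,q)$ as an operator on $\widetilde\rH(X,\bC)$: it sends $\alpha \in \widetilde\rH^{(0,0)}$ to $\lambda \in \widetilde\rH^{(p,q)}$, it sends $\widetilde\rH^{(r,s)} \subset \rH^2(X,\bC(1))$ to $b(\lambda,\cdot)\beta$ which vanishes unless $(r,s) = (-p,-q)$ and lands in $\widetilde\rH^{(0,0)} = \bQ\beta$, and it kills $\beta$. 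By the Leibniz extension to $\Sym^d \widetilde\rH(X,\bC)$, the same bidegree $(p,q)$ statement holds on the symmetric power.

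Applying this iteratively, for $\lambda_i$ of pure Hodge type $(p_i,q_i)$ in $\rH^2(X,\bQ(1))$ the element
\[
\Psi(\lambda_1\cdots\lambda_n) = e_{\lambda_1}\cdots e_{\lambda_n}(\alpha^d/d!)
\]
lies in the $(\sum p_i,\sum q_i)$-component of $\Sym^d\widetilde\rH(X,\bC)$. On the other hand, the cup product $\lambda_1\cdots\lambda_n$ in $\rH^{2n}(X,\bC(n)) \subset \SH(X,\bC)$ is precisely of Hodge type $(\sum p_i, \sum q_i)$ in the weight-zero structure on $\SH(X,\bQ)$. Thus $\Psi_\bC$ maps each Hodge component into the corresponding Hodge component, i.e.\ $\Psi$ is a morphism of $\bQ$-Hodge structures of weight $0$.

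The only point that takes any work is the claim that $b$ is a morphism of Hodge structures, but this is standard: $b$ is proportional to the map $\mu \mapsto \int_X \mu \cup \mu \cup \kappa$ for a suitable algebraic class $\kappa$, and more intrinsically $b$ is the restriction to $\rH^2$ of a Hodge-theoretic pairing coming from $\int_X$, which is of type $(-2d,-2d)$ combined with the correct Tate twist to land in bidegree $(0,0)$. I do not expect any real obstacle here; the argument is an unwinding of definitions once the Hodge-type of $e_\lambda$ is identified.
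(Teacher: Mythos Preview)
Your argument is correct and follows essentially the same approach as the paper: both go through the explicit description of $\Psi$ via $\widetilde\Psi(\lambda_1\cdots\lambda_n)=e_{\lambda_1}\cdots e_{\lambda_n}(\alpha^d/d!)$ and use that this is visibly compatible with Hodge types. The paper's proof is much terser---it simply asserts that $\widetilde\Psi$ and the quotient map $\Sym^\bullet \rH^2(X,\bQ(1))\to\SH(X,\bQ)$ are morphisms of Hodge structures---whereas you spell out the underlying reason, namely that $b$ has Hodge type $(0,0)$ and hence each $e_\lambda$ shifts bidegree by the bidegree of $\lambda$.
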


\begin{proof}
This is clear from the proof of Proposition~\ref{prop:psi}: the map $\widetilde\Psi$ is a morphism of Hodge structures, and so is the quotient map $\Sym^\bullet \rH(X,\bQ(1)) \to \SH(X,\bQ)$.
\end{proof}

\begin{proposition}\label{prop:similitude}
Let $X_1$ and $X_2$ be derived equivalent hyperk\"ahler varieties. Then there exists
a Hodge similitude $\varphi\colon \widetilde\rH(X_1,\bQ) \isomto \widetilde\rH(X_2,\bQ)$ 
and a scalar $\lambda \in K^\times$ so that the square
\[
\begin{tikzcd}[column sep=huge]
	\SH(X_1,\bQ) \arrow{r}{\Phi^{\SH}} \arrow{d}{\Psi} & \SH(X_2,\bQ) \arrow{d}{\Psi} \\
	\Sym^d  \widetilde\rH(X_1,\bQ) 
		\arrow{r}{ \lambda \Sym^d(\varphi)}
	& \Sym^d  \widetilde\rH(X_2,\bQ) 
\end{tikzcd}
\]
commutes.
\end{proposition}

\begin{proof}
By Theorem~\ref{thm:hyperkahler-subring} and Proposition~\ref{prop:twist-similitude} there 
exists a similitude $\varphi$ and a scalar $\lambda$ that make the square commute.

It remains to check that $\varphi$ respects the Hodge structures. The Hodge structure on $\widetilde\rH(X_i,\bQ)$ is given by a morphism $h_i\colon \bC^\times \to \rO(\rH(X_i,\bR))$, and the preceding lemma implies that the Hodge structure on $\SH(X_i,\bQ)$ is given by composing $h_i$ with the injective map $\rO(\rH(X_i,\bR)) \to \GL(\SH(X_i,\bR))$.  Since $\varphi$ maps the Hodge structure on $\SH(X_1,\bQ)$ to the Hodge structure on $\SH(X_2,\bQ)$, we conclude that $\varphi$ maps $h_1$ to $h_2$.
\end{proof}

\begin{theorem}[$d$ odd]\label{thm:odd-mukai-functoriality}
Assume that $d$ is odd, and that $X_1$ and $X_2$ are deformation-equivalent hyperk\"ahler varieties of dimension $2d$. Let  $\Phi\colon \cD X_1 \isomto \cD X_2$ be
an equivalence. Then there is a unique Hodge isometry $\Phi^{\widetilde\rH}$
making the square
\[
\begin{tikzcd}[column sep=huge]\label{thm:isom-odd}
	\SH(X_1,\bQ) \arrow{r}{\Phi^{\SH}} \arrow{d}{\Psi} & \SH(X_2,\bQ) \arrow{d}{\Psi} \\
	\Sym^d  \widetilde\rH(X_1,\bQ) 
		\arrow{r}{ \Sym^d(\Phi^{\widetilde\rH})}
	& \Sym^d  \widetilde\rH(X_2,\bQ) 
\end{tikzcd}
\]
commute. The formation of $\Phi^{\widetilde\rH}$ is functorial 
in $\Phi$.
\end{theorem}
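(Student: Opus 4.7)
The plan is to deduce the theorem from Theorem~\ref{thm:hyperkahler-subring} by invoking the representation-theoretic rigidity of $S_{[d]}$ recorded in Corollary~\ref{cor:odd-isom}(i), which in odd degree forces an isomorphism of $S_{[d]}$-objects to come from a unique isometry of the underlying quadratic spaces. Once the $\bQ$-isometry $\Phi^{\widetilde\rH}$ has been produced in this way, the only non-formal task will be to upgrade it to a Hodge isometry.

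Concretely, Theorem~\ref{thm:hyperkahler-subring} gives an isomorphism $\Phi^{\SH}$ in $\cC_\bQ$. By Lemma~\ref{lemma:exact-sequence} together with Proposition~\ref{prop:bbf-on-sh}, the maps $\Psi_i$ identify the triples $(\SH(X_i,\bQ)[2d],\,b_{\SH,i},\,\fg(X_i))$ with $S_{[d]}(\widetilde\rH(X_i,\bQ))$ as objects of $\cC_\bQ$. Applying Corollary~\ref{cor:odd-isom}(i) to the transported isomorphism $\Psi_2\circ \Phi^{\SH}\circ \Psi_1^{-1}$ then produces a unique $\bQ$-isometry $\Phi^{\widetilde\rH}\colon \widetilde\rH(X_1,\bQ)\isomto\widetilde\rH(X_2,\bQ)$ whose $\Sym^d$ makes the square commute. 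Functoriality of the construction is immediate from this uniqueness: for a composable pair of equivalences, $\Phi_2^{\widetilde\rH}\circ \Phi_1^{\widetilde\rH}$ is an isometry whose $\Sym^d$ realises the commutative square attached to $\Phi_2\circ \Phi_1$, so it must coincide with $(\Phi_2\circ \Phi_1)^{\widetilde\rH}$.

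The main obstacle is Hodge compatibility of $\Phi^{\widetilde\rH}$. The starting observation is that $\Phi^\rH$ is a morphism of weight-$0$ $\bQ$-Hodge structures on $\bigoplus_n \rH^{2n}(X_i,\bQ(n))$ by the standard properties of Mukai vectors, so its restriction $\Phi^{\SH}$ is a morphism of the weight-$0$ Hodge structures of Definition~\ref{def:weight-zero-HS}; the lemma immediately preceding the theorem ensures that the $\Psi_i$ are also Hodge. Let $h_i\colon \mathbf{S}\to \rO(\widetilde\rH(X_i,\bR),b_i)$ be the Deligne-torus homomorphism defining the weight-$0$ Hodge structure on $\widetilde\rH(X_i,\bQ)$. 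For any $z\in\mathbf{S}(\bR)$, the conjugate $\Phi_z := h_2(z)^{-1}\circ \Phi^{\widetilde\rH}_\bR\circ h_1(z)$ is again an $\bR$-isometry, and $\Sym^d(\Phi_z)$ induces, via the $\Psi_i$, the map $h_2^{\SH}(z)^{-1}\circ \Phi^{\SH}_\bR\circ h_1^{\SH}(z)$, which equals $\Phi^{\SH}_\bR$ because $\Phi^{\SH}$ is Hodge. The uniqueness clause of Corollary~\ref{cor:odd-isom}(i) applied over $\bR$ then forces $\Phi_z=\Phi^{\widetilde\rH}_\bR$ for every $z$, so $\Phi^{\widetilde\rH}$ intertwines the $\mathbf{S}$-actions and is a Hodge morphism, completing the proof.
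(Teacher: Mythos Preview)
Your proposal is correct and follows essentially the same route as the paper: invoke Theorem~\ref{thm:hyperkahler-subring} and Corollary~\ref{cor:odd-isom}(i) to produce the unique isometry $\Phi^{\widetilde\rH}$ and deduce functoriality from uniqueness, then check Hodge compatibility using that $\Phi^{\SH}$ and the $\Psi_i$ are Hodge. The only difference is cosmetic: where you conjugate by elements of the Deligne torus and appeal to the uniqueness clause of Corollary~\ref{cor:odd-isom}(i) over $\bR$, the paper phrases the same step as injectivity of the natural map $\rO(\widetilde\rH(X_i,\bR))\to \GL(\SH(X_i,\bR))$ (which, for $d$ odd, is exactly what Proposition~\ref{prop:functor-aut} records).
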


\begin{proof}
Since $X_1$ and $X_2$ are deformation equivalent, we can choose an isometry
$\varphi\colon \widetilde\rH(X_1,\bQ) \isomto \widetilde\rH(X_2,\bQ)$. Moreover, $X_1$ and
$X_2$ have the same Fujiki constant, so $\Sym^d\varphi$ restricts to an isometry between the
images of $\Psi$. 
Then by Theorem~\ref{thm:hyperkahler-subring} and Proposition~\ref{prop:functor-aut}, there is a unique isometry
$\psi \in \rO(\widetilde\rH(X_2,\bQ))$ such that $\Phi^{\widetilde\rH} := \psi\varphi$ makes the square commute. Uniqueness forces its formation to be functorial.

That $\Phi^{\widetilde\rH}$ respects the Hodge structures follows from the same argument
as in the proof of Proposition~\ref{prop:similitude}. 
\end{proof}

If $d$ is even, then the natural map
\[
	\rO( \widetilde\rH(X,\bQ)) \to \Aut (\SH(X,\bQ),b_X,\fg(X))
\]
is neither injective, nor surjective, and the proof above fails. However, if we moreover assume that $b_2(X)$ is odd, then one can use  the isomorphism $\SO\times \{\pm1\} \cong \rO$ to salvage the situation a bit.

Define an \emph{orientation} on $X$ to be the choice of a generator of $\det \rH^2(X,\bR)$, up to $\bR^\times_{>0}$. Equivalently, an orientation is the choice of generator of $\det \widetilde\rH(X,\bR)$ up to $\bR^\times_{>0}$. Define the \emph{sign} $\epsilon(\varphi)$ of a Hodge isometry $\varphi\colon \widetilde\rH(X_1,\bQ) \isomto \widetilde\rH(X_2,\bQ)$ as $\epsilon(\varphi)=1$ if $\varphi$ respects the orientations and $\epsilon(\varphi)=-1$ otherwise. A derived equivalence between oriented hyperk\"ahler varieties is a derived equivalence between the underlying unoriented hyperk\"ahler varieties.  

\begin{theorem}[$d$ even]\label{thm:isom-even}\label{thm:even-mukai-functoriality}
Assume that $d$ is even, and that $\Phi\colon \cD X_1 \isomto \cD X_2$ is a derived equivalence between oriented hyperk\"ahler varieties of dimension $2d$. Assume that $X_1$ and $X_2$ have odd $b_2$, and that the quadratic spaces $\rH^2(X_1,\bQ)$ and $\rH^2(X_2,\bQ)$ are isometric. Then there exists 
a unique  Hodge isometry $\Phi^{\widetilde\rH}$ making  the square 
\[
\begin{tikzcd}[column sep=huge]
	\SH(X_1,\bQ) \arrow{r}{\epsilon(\Phi^{\widetilde\rH})\Phi^{\SH}} \arrow{d}{\Psi} & \SH(X_2,\bQ) \arrow{d}{\Psi} \\
	\Sym^d  \widetilde\rH(X_1,\bQ) 
		\arrow{r}{ \Sym^d(\Phi^{\widetilde\rH})}
	& \Sym^d  \widetilde\rH(X_2,\bQ) 
\end{tikzcd}
\]
commute. Morever, the formation of $\Phi^{\widetilde\rH}$ is functorial for composition of derived equivalences between  hyperk\"ahler varieties equipped with orientations.
\end{theorem}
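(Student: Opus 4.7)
My strategy follows the same template as Theorem~\ref{thm:odd-mukai-functoriality}, applying Corollary~\ref{cor:odd-isom}(ii) to the isomorphism $\Phi^{\SH}$ produced by Theorem~\ref{thm:hyperkahler-subring}, with the extra work here being to pin down the similitude factor and the sign. From Theorem~\ref{thm:hyperkahler-subring} and the identification $\SH(X_i,\bQ) \cong S_{[d]}\widetilde\rH(X_i,\bQ)$ of \S~\ref{subsec:regular-part}, Corollary~\ref{cor:odd-isom}(ii) produces a sign $\varepsilon \in \{\pm 1\}$, a scalar $\lambda \in \bQ^\times$, and an isometry $\varphi\colon (\widetilde\rH(X_1,\bQ),b_1) \isomto (\widetilde\rH(X_2,\bQ), \lambda b_2)$ satisfying $\Phi^{\SH} = \varepsilon\, \lambda^{-d/2}\, S_{[d]}(\varphi)$, with the ambiguity $\varphi \mapsto \mu \varphi$, $\lambda \mapsto \mu^2 \lambda$.

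Next I rule out the similitude factor. The hypothesis that $\rH^2(X_1,\bQ) \cong \rH^2(X_2,\bQ)$ upgrades, after adjoining a hyperbolic plane, to an isometry $\widetilde\rH(X_1,\bQ) \cong \widetilde\rH(X_2,\bQ)$. Combining this with the similitude $\varphi$ yields $(\widetilde\rH(X_2,\bQ), b_2) \cong (\widetilde\rH(X_2,\bQ), \lambda b_2)$; comparing discriminants in $\bQ^\times/\bQ^{\times 2}$, and using that scaling a form of dimension $n = b_2+2$ multiplies the discriminant by $\lambda^n$, the oddness of $n$ forces $\lambda$ to be a square. By the ambiguity in Corollary~\ref{cor:odd-isom}(ii), I may rescale so that $\lambda=1$, yielding a genuine isometry $\varphi\colon \widetilde\rH(X_1,\bQ) \isomto \widetilde\rH(X_2,\bQ)$ with $\Phi^{\SH} = \varepsilon\, S_{[d]}(\varphi)$ and remaining ambiguity $\varphi \leftrightarrow -\varphi$.

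To match signs I use that in odd dimension $-\id$ reverses orientation, so $\epsilon(-\varphi) = -\epsilon(\varphi)$, while $S_{[d]}(-\varphi) = (-1)^d S_{[d]}(\varphi) = S_{[d]}(\varphi)$ since $d$ is even. Hence exactly one of $\pm\varphi$ has sign $\epsilon(\varphi) = \varepsilon$, and I take $\Phi^{\widetilde\rH}$ to be that choice; this makes the displayed square commute and simultaneously delivers uniqueness.

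Finally I check Hodge-equivariance and functoriality. The Hodge structure on $\widetilde\rH(X_i,\bR)$ is given by a morphism $h_i\colon \bC^\times \to \SO(\widetilde\rH(X_i,\bR))$, and the one on $\SH(X_i,\bR)$ is $S_{[d]} \circ h_i$. Since $-\id \notin \SO$ in odd dimension, the kernel of $S_{[d]}|_{\rO}$ (which is $\{\pm 1\}$ by Proposition~\ref{prop:functor-aut}) meets $\SO$ trivially, so $S_{[d]}|_{\SO}$ is injective. The Hodge-ness of $\Phi^{\SH}$ therefore lifts to $\varphi h_1(z) \varphi^{-1} = h_2(z)$, showing $\varphi$ is Hodge. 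Functoriality of $\Phi \mapsto \Phi^{\widetilde\rH}$ under composition then follows from uniqueness together with the multiplicativity of $\epsilon(\cdot)$. The delicate point throughout is the bookkeeping of the three sign/scalar ambiguities (the scalar $\lambda$, the sign $\varepsilon$, and the choice between $\pm\varphi$); the oddness of $b_2$ is precisely what makes each of them resolvable.
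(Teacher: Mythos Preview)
Your proof is correct and follows essentially the same route the paper indicates: apply Theorem~\ref{thm:hyperkahler-subring} together with the $d$-even case of Proposition~\ref{prop:functor-aut} (which you access through Corollary~\ref{cor:odd-isom}(ii)), mimicking the argument for Theorem~\ref{thm:odd-mukai-functoriality}. The paper's own proof is a one-line reference to exactly these ingredients; you have simply spelled out the details it suppresses --- in particular the discriminant argument that uses odd $b_2+2$ to force the similitude factor $\lambda$ into $\bQ^{\times 2}$, and the orientation bookkeeping that resolves the residual $\pm\varphi$ ambiguity and yields uniqueness.
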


\begin{proof}
This follows from Theorem~\ref{thm:hyperkahler-subring} and Proposition~\ref{prop:functor-aut} with essentially the same argument as the proof of Theorem~\ref{thm:isom-odd}.
\end{proof}

\begin{remark}
If $X_1$ and $X_2$ are hyperk\"ahler varieties belonging to one of the known families, and if $\Phi\colon \cD X_1 \isomto \cD X_2$ is an equivalence, then the hypotheses of either Theorem~\ref{thm:isom-odd} or Theorem~\ref{thm:isom-even} are satisfied. Indeed, $X_1$ and $X_2$ will have the same dimension $2d$ and because they have isomorphic LLV Lie algebra, they have the same second Betti number $b_2$. Going through the list of known families, one sees that this implies that $X_1$ and $X_2$ are deformation equivalent. In particular, they have isometric $\rH^2$. Finally, all known hyperk\"ahler varieties of dimension $2d$ with $d$ even have odd $b_2$.  
\end{remark}

Taking $X_1=X_2$ in Theorem~\ref{thm:isom-odd} and Theorem~\ref{thm:isom-even} yields Theorem~\ref{bigthm:normalizer} from the introduction:

\begin{theorem}\label{thm:normalizer}
Let $X$ be a hyperk\"ahler variety of dimension $2d$. Assume that either $d$ is odd or that $d$ is even and $b_2(X)$ is odd. Then the representation $\rho^\SH \colon \Aut \cD(X) \to \GL(\SH(X,\bQ))$ factors over
a  map $\rho^{\widetilde\rH} \colon \Aut \cD(X) \to \rO(\widetilde\rH(X,\bQ))$.
\end{theorem}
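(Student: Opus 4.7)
The plan is to specialize Theorems~\ref{thm:odd-mukai-functoriality} and \ref{thm:even-mukai-functoriality} to the case $X_1=X_2=X$ and read off the factorization; essentially all the substantive work has already been carried out, and what remains is bookkeeping.

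First, suppose $d$ is odd. For any $\Phi \in \Aut(\cD X)$ the hypotheses of Theorem~\ref{thm:odd-mukai-functoriality} are trivially satisfied, so we obtain a unique Hodge isometry $\Phi^{\widetilde\rH} \in \rO(\widetilde\rH(X,\bQ))$ characterised by $\Sym^d(\Phi^{\widetilde\rH}) \circ \Psi = \Psi \circ \Phi^{\SH}$. By uniqueness, together with the functoriality asserted in that theorem, the assignment $\Phi \mapsto \Phi^{\widetilde\rH}$ is a group homomorphism, which I take to be $\rho^{\widetilde\rH}$. Since $\Psi$ is an injective morphism of $\fg(X)$-modules (an isometric embedding by Proposition~\ref{prop:bbf-on-sh}), composing $\rho^{\widetilde\rH}$ with the natural map $\rO(\widetilde\rH(X,\bQ)) \to \GL(\SH(X,\bQ))$ obtained by transporting $\Sym^d$ through $\Psi$ recovers $\rho^{\SH}$.

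Second, suppose $d$ is even and $b_2(X)$ is odd. I would first fix an orientation on $X$, making it into an oriented hyperk\"ahler variety in the sense of Theorem~\ref{thm:isom-even}. The hypothesis that $\rH^2(X_1,\bQ)$ and $\rH^2(X_2,\bQ)$ be isometric is tautological when $X_1=X_2=X$, so that theorem applies and produces for each $\Phi$ a unique Hodge isometry $\Phi^{\widetilde\rH} \in \rO(\widetilde\rH(X,\bQ))$ satisfying
\[
	\Sym^d(\Phi^{\widetilde\rH}) \circ \Psi = \Psi \circ \bigl(\epsilon(\Phi^{\widetilde\rH})\,\Phi^{\SH}\bigr).
\]
Functoriality for composition of derived equivalences between \emph{oriented} hyperk\"ahler varieties (the orientation on $X$ being the fixed one on both source and target) makes $\Phi \mapsto \Phi^{\widetilde\rH}$ a group homomorphism, and we again take this as $\rho^{\widetilde\rH}$. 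The realisation map $\rO(\widetilde\rH(X,\bQ)) \to \GL(\SH(X,\bQ))$ that witnesses the factorisation must now be defined by $\varphi \mapsto \epsilon(\varphi)^{-1} \Sym^d(\varphi)$, transported through $\Psi$; this is still a group homomorphism because the sign character $\epsilon \colon \rO(\widetilde\rH(X,\bR)) \to \{\pm1\}$ is one, and then $\rho^{\SH}$ is obtained by post-composing $\rho^{\widetilde\rH}$ with it.

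I do not expect any serious obstacle: the deep content sits in Theorems~\ref{thm:odd-mukai-functoriality} and \ref{thm:even-mukai-functoriality}. The only point requiring care is the correct identification of the factoring map in the even case, where one must carry along the sign $\epsilon$ so as not to lose a scalar when reading off $\rho^{\SH}$ from $\rho^{\widetilde\rH}$.
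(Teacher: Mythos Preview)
Your proposal is correct and follows exactly the paper's approach: the paper's proof is the single sentence ``Taking $X_1=X_2$ in Theorem~\ref{thm:isom-odd} and Theorem~\ref{thm:isom-even} yields Theorem~\ref{bigthm:normalizer}'', and you have spelled out precisely that specialization. One small remark: your sign character $\epsilon$ (orientation-preservation) coincides with the determinant character on $\rO(\widetilde\rH(X,\bQ))$ when $b_2$ is odd, since then $\dim\widetilde\rH(X,\bQ)$ is odd and an isometry preserves the chosen orientation if and only if it has determinant~$1$; this matches Remark~\ref{rmk:explicit-normalizer}.
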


\begin{remark}\label{rmk:explicit-normalizer}
For $d$ odd, the implicit map $\rO(\widetilde\rH(X,\bQ)) \to \GL(\SH(X,\bQ))$ is the natural map coming from the isomorphism $\SH(X,\bQ) \cong S_{[d]} \widetilde\rH(X,\bQ))$. For $d$ even (and $b_2$ odd), it is the twist of the natural map with the determinant character $\rO(\widetilde\rH(X,\bQ)) \to \{\pm 1\}$. 
\end{remark}

\section{Hodge structures}\label{sec:hodge}

In this section we prove Theorem~\ref{bigthm:hodge-structures} from the introduction. 

For a rational quadratic space $V$ we will make use of the algebraic groups $\bSO(V)$, $\bSpin(V)$, and $\bGSpin(V)$ over $\Spec \bQ$. These groups sit in exact sequences 
\begin{gather}
	\label{eq:def-so} 1 \longto \{\pm 1\} \longto \bSpin(V) \longto \bSO(V) \longto 1 \\
	\label{eq:gspin-so} 1 \longto \bG_m \longto \bGSpin(V) \longto \bSO(V) \longto 1 \\
	\label{eq:spin-gspin} 1 \longto \{\pm 1\} \longto \bG_m \times \bSpin(V) \longto \bGSpin(V) \longto 1.
\end{gather}
We will write $\SO(V)$, $\Spin(V)$, and $\GSpin(V)$ for the groups of $\bQ$-points of these algebraic groups.

\begin{lemma}\label{lemma:gspin-acts}
Let $X$ be a hyperk\"ahler variety. There exists a unique action of $\bGSpin(\widetilde\rH(X,\bQ))$ on $\rH(X,\bQ)$ such that
\begin{enumerate}
\item the action of $\bSpin(\widetilde\rH(X,\bQ)) \subset \bGSpin(\widetilde\rH(X,\bQ))$ integrates the action of $\fg(X)=\fso(\widetilde\rH(X,\bQ))$
\item a section $\lambda \in \bG_m \subset \bGSpin(\widetilde\rH(X,\bQ))$ acts as $\lambda^{i-2d}$ on $\rH^i(X,\bQ)$.
\end{enumerate}
\end{lemma}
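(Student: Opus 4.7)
The plan is to use the central isogeny~(\ref{eq:spin-gspin}), rewriting the problem as that of constructing compatible actions of $\bG_m$ and $\bSpin(\widetilde\rH(X,\bQ))$ on $\rH(X,\bQ)$ on which the diagonal kernel $\mu_2 = \{(1,1),(-1,-1)\}$ acts trivially. Since $\bG_m \times \bSpin \to \bGSpin$ is surjective, uniqueness is automatic from (i) and (ii): these pin down the restrictions to the two factors, hence the full $\bGSpin$-action.

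For existence, I first integrate the LLV representation $\fg(X) = \fso(\widetilde\rH(X,\bQ)) \to \End(\rH(X,\bQ))$ to an algebraic representation $\rho_{\Spin}\colon \bSpin(\widetilde\rH(X,\bQ)) \to \GL(\rH(X,\bQ))$ via simply-connectedness of $\bSpin$; this forces (i). I then declare the $\bG_m$-factor to act as $\lambda^{i-2d}$ on $\rH^i$, which is (ii). The key consistency observation is that the Cartan element $h \in \fso(\widetilde\rH(X,\bQ))$ corresponds under the graded isomorphism of Theorem~\ref{thm:LLV} to the shifted grading operator on $\rH(X,\bQ)$, and hence acts on $\rH^i$ as the scalar $i-2d$; equivalently, the declared $\bG_m$-action coincides with that of the $\bSpin$-cocharacter $\lambda \mapsto \exp(h \log \lambda)$, which coherently packages the two factor actions.

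The heart of the proof is verifying that $(-1,-1) \in \mu_2$ acts trivially. The $\bG_m$-factor contributes $(-1)^{i-2d} = (-1)^i$ on $\rH^i$ (since $2d$ is even), so it remains to show $\rho_{\Spin}(-1)$ acts as $(-1)^i$ on $\rH^i$. Decomposing $\rH(X,\bQ)$ into irreducible $\fso(\widetilde\rH(X,\bQ))$-submodules: each such submodule is supported in cohomological degrees of a single parity $\epsilon \in \{0,1\}$ (as the generators of $\fso$ sit in degrees $0,\pm2$), and the claim reduces to showing that $-1 \in \bSpin(\widetilde\rH(X,\bQ))$ acts on such a submodule by the scalar $(-1)^\epsilon$. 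This is the main obstacle; establishing it requires relating the grading element $h$ to the coroot describing the nontrivial element of $\ker(\bSpin \to \bSO)$ and tracking the parity of $h$-eigenvalues against the class of the highest weight modulo the root lattice of $\bSO(\widetilde\rH(X,\bQ))$.
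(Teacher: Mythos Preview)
Your overall strategy matches the paper's: integrate to $\bSpin$ via simple-connectedness, impose the $\bG_m$-action from~(ii), and reduce via~(\ref{eq:spin-gspin}) to checking that $-1\in\bSpin$ acts as $(-1)^i$ on $\rH^i(X,\bQ)$. You also correctly identify the cocharacter $\lambda\mapsto\exp(h\log\lambda)$ as the link between the two factors.

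The gap is that you stop short of actually verifying the key claim, and the route you sketch in your last paragraph (highest weights modulo the root lattice) is more elaborate than necessary. The paper finishes in one line using the very cocharacter you wrote down: that cocharacter is nothing but the diagonal torus of the $\bSL_2$ obtained by integrating any Lefschetz $\fsl_2$-triple $(e_\lambda,h,f_\lambda)\subset\fg(X)$, so $\diag(\lambda,\lambda^{-1})$ acts as $\lambda^{i-2d}$ on $\rH^i$. The only fact you then need is that the inclusion $\bSL_2\hookrightarrow\bSpin(\widetilde\rH(X,\bQ))$ sends the central element $\diag(-1,-1)$ to $-1\in\bSpin$; this is a standard Clifford-algebra computation (e.g.\ writing $h=\alpha\beta+1$ in the even Clifford algebra and exponentiating). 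Once that is granted, $\rho_{\Spin}(-1)=\rho_{\Spin}(\diag(-1,-1))=(-1)^{i-2d}=(-1)^i$ on $\rH^i$, and you are done. So rather than decomposing into irreducibles and invoking weight-lattice bookkeeping, you can simply evaluate your own cocharacter at $-1$ and quote this one structural fact about $\bSL_2\subset\bSpin$.
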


\begin{proof}
The action of $\fso(\widetilde\rH(X,\bQ))$ integrates to an action of the simply connected group $\bSpin(\widetilde\rH(X,\bQ))$. By (\ref{eq:spin-gspin}) it suffices to verify that $-1\in \Spin(\widetilde\rH(X,\bQ))$ acts as $(-1)^i$ on $\rH^i(X,\bQ)$. Any $\fsl_2$-triple $(e_\lambda,h,f_\lambda)$ in $\fg(X)$ induces an action of $\SL_2$ on $\rH(X,\bQ)$, with the property that $\diag(\lambda,\lambda^{-1}) \in \SL_2(\bQ)$ acts as $\lambda^i$ on $\rH^{2d+i}(X,\bQ)$. Since the inclusion $\bSL_2 \subset \bSpin$ maps $\diag(1,-1)$ to $-1$, the lemma follows.
\end{proof}

Recall from Definition~\ref{def:weight-zero-HS} that we have equipped $\widetilde\rH(X,\bQ)$ and $\rH^\ev(X,\bQ)$ with Hodge structures of weight $0$. Similarly, we equip the odd cohomology of $X$ with a Hodge structure of weight $1$ as follows
\[
	\rH^\odd(X,\bQ) :=  \bigoplus_{i} \rH^{2i+1}(X,\bQ(i)).
\]

\begin{lemma}\label{lemma:respects-hodge}
Let $g\in \GSpin(\widetilde\rH(X,\bQ))$. If the action of $g$ on $\widetilde\rH(X,\bQ)$ respects the Hodge structure, then so does its action on $\rH^\ev(X,\bQ)$ and on $\rH^\odd(X,\bQ)$.
\end{lemma}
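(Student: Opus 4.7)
The plan is to encode the Hodge structure on each of $\widetilde\rH(X,\bQ)$, $\rH^\ev(X,\bQ)$ and $\rH^\odd(X,\bQ)$ via the action of a single element $T \in \fg(X)\otimes_\bQ\bC$, and then to observe that the intrinsic condition $\mathrm{Ad}(g)(T)=T$ can be checked in either of the two $\bGSpin$-representations at hand.

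Concretely I would set $T := i(h_p - h_q)$, where $h_p,h_q \in \fg(X)\otimes_\bQ\bC$ are the Hodge bigrading operators of \S\ref{subsec:bi-graded-lie}, which belong to $\fg(X)\otimes_\bQ\bC$ by Corollary~\ref{cor:h-in-g}. Acting on $\rH(X,\bC)$ via the LLV representation, $T$ is multiplication by $i(p-q)$ on $\rH^{p,q}(X,\bC)$. For any weight-$w$ Hodge structure the $\bR_{>0}$-factor of $\bC^\times$ acts as the scalar $r^w$, so respecting a Hodge structure is equivalent to commuting with the $S^1$-factor, or with its infinitesimal generator; and for both the weight-$0$ structure on $\rH^\ev(X,\bQ)$ and the weight-$1$ structure on $\rH^\odd(X,\bQ)$ (Definition~\ref{def:weight-zero-HS}), that infinitesimal generator is $i(p-q)$ on $\rH^{p,q}(X,\bC)$, since the Tate-twist shift by $n$ affects $p$ and $q$ by the same amount and cancels in $p-q$. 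Hence $T$ is the joint infinitesimal Weil operator for $\rH^\ev \oplus \rH^\odd$.

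Next I would identify $T$ as the infinitesimal Weil operator also for $\widetilde\rH(X,\bQ)$. Viewed in $\fso(\widetilde\rH(X,\bC))$ through the LLV isomorphism of Theorem~\ref{thm:LLV}, the element $h_p - h_q$ is of degree $0$ and so preserves the splitting $\widetilde\rH(X,\bC) = \bC\alpha \oplus \rH^2(X,\bC) \oplus \bC\beta$. A short Leibniz-rule computation applied to the $\fg(X)$-equivariant embedding $\Psi\colon \SH(X,\bQ)[2d]\to\Sym^d\widetilde\rH(X,\bQ)$ of Proposition~\ref{prop:psi}, using $\Psi(1)=\alpha^d/d!$ and $\Psi(\lambda)=\lambda\alpha^{d-1}/(d-1)!$ for $\lambda \in \rH^2(X,\bQ)$, then pins down the action of $h_p - h_q$ on $\widetilde\rH(X,\bC)$: it is $0$ on $\bC\alpha\oplus\bC\beta$ and multiplication by $p-q$ on $\rH^{p,q}\subset \rH^2(X,\bC)$, which is exactly $-i$ times the infinitesimal generator of the $S^1$-action defining the weight-$0$ Hodge structure on $\widetilde\rH(X,\bC)$.

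To finish, for any representation $\rho\colon \bGSpin(\widetilde\rH(X,\bQ))\to\GL(W)$ and any $Y\in \fg(X)\otimes_\bQ\bC$ one has $\rho(g)\rho(Y)\rho(g)^{-1}=\rho(\mathrm{Ad}(g)(Y))$. Both the natural representation on $\widetilde\rH(X,\bC)$ (via $\bGSpin\to\bSO$) and the representation on $\rH(X,\bC)$ from Lemma~\ref{lemma:gspin-acts} are faithful on the semisimple Lie algebra $\fg(X)\otimes_\bQ\bC$, so commuting with $T$ in either representation is equivalent to the single condition $\mathrm{Ad}(g)(T)=T$ in $\fg(X)\otimes_\bQ\bC$. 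Combined with the two preceding paragraphs, this gives the claim (indeed as a biconditional). The main technical obstacle will be the Leibniz computation in the second step; once the action of $h_p - h_q$ on $\widetilde\rH(X,\bC)$ is settled, the remainder is essentially formal.
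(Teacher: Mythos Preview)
Your proposal is correct and follows essentially the same approach as the paper: the paper's proof invokes precisely the element $h' = h_q - h_p \in \fg(X)\otimes_\bQ\bC$ (your $T$ up to the scalar $-i$) to encode the Hodge structures, and then uses faithfulness of the $\fg(X)$-module $\widetilde\rH(X,\bQ)$ to conclude that commuting with $h'$ there forces $\mathrm{Ad}(g)(h')=h'$ and hence commutation in every representation. Your write-up is simply a more explicit unpacking of these two sentences, in particular of the step identifying the action of $h'$ on $\widetilde\rH(X,\bQ)$.
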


\begin{proof}
This follows  immediately from the fact that the Hodge structure is determined by the action of $h'\in\fg(X)\otimes_\bQ\bC$ (see \S~\ref{subsec:bi-graded-lie}), and from the faithfulness of the $\fg(X)$-module $\widetilde\rH(X,\bQ)$. 
\end{proof}

\begin{theorem}Let $X_1$ and $X_2$ be hyperk\"ahler varieties, and let $\Phi\colon\cD X_1\isomto \cD X_2$ be an equivalence. Then for every $i$ the $\bQ$-Hodge structures $\rH^i(X_1,\bQ)$ and 
$\rH^i(X_2,\bQ)$ are isomorphic.
\end{theorem}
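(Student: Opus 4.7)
The strategy uses the derived invariance of the LLV Lie algebra (Theorem~\ref{bigthm:Lie-invariant}) together with the group-theoretic framework of Lemmas~\ref{lemma:gspin-acts} and~\ref{lemma:respects-hodge}. Combining Theorems~\ref{bigthm:Lie-invariant} and~\ref{thm:LLV}, the equivalence $\Phi$ produces an isomorphism of rational Lie algebras $\fso(\widetilde\rH(X_1,\bQ)) \isomto \fso(\widetilde\rH(X_2,\bQ))$ with respect to which $\Phi^\rH$ is equivariant. Lifting to simply-connected covers, this gives a $\bQ$-algebraic group isomorphism $\phi\colon \bSpin(\widetilde\rH(X_1,\bQ)) \isomto \bSpin(\widetilde\rH(X_2,\bQ))$ with $\Phi^\rH$ still equivariant; via the central $\bG_m$ of $\bGSpin$ (Lemma~\ref{lemma:gspin-acts}) this extends to a $\bGSpin$-datum that keeps track of the grading on both sides.

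Next, I would transport the Hodge cocharacter $h_1\colon \bC^\times \to \bSpin(\widetilde\rH(X_1,\bR))$ via $\phi$ to a cocharacter $\tilde h_1$ with values in $\bSpin(\widetilde\rH(X_2,\bR))$, defining a second, transported Hodge structure on the $\bQ$-vector space $\widetilde\rH(X_2,\bQ)$. By Lemma~\ref{lemma:respects-hodge}, combined with Lemma~\ref{lemma:gspin-acts}, this cocharacter determines Hodge structures on each $\rH^i(X_2,\bQ)$ which are, by equivariance, canonically isomorphic via $\Phi^\rH$ to the natural $\bQ$-Hodge structures on $\rH^i(X_1,\bQ)$. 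The theorem then reduces to the claim that the transported Hodge structure on $\widetilde\rH(X_2,\bQ)$ induces, on each $\rH^i(X_2,\bQ)$, a $\bQ$-Hodge structure isomorphic to the natural one induced by $h_2$.

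Both $\tilde h_1$ and $h_2$ are Hodge cocharacters of K3-type on the quadratic space $\widetilde\rH(X_2,\bR)$ and are hence conjugate within $\bGSpin(\widetilde\rH(X_2,\bR))(\bR)$, but a priori not over $\bQ$. Inspired by \cite{Soldatenkov}, I would exploit that the moduli of complex structures on the hyperk\"ahler manifold underlying $X_2$ sweeps out a dense subset of the period domain for $\widetilde\rH$, and that along such deformations the isomorphism class of the Hodge structure on every fixed $\bQ$-rational $\bGSpin$-representation varies in a controlled way. A density/limiting argument within the period domain then identifies, up to $\bQ$-isomorphism, the Hodge structures induced by $\tilde h_1$ and $h_2$ on each $\rH^i(X_2,\bQ)$. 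The main obstacle — and the technical heart of the proof, carried over from the Soldatenkov framework — is precisely this last step: converting a real conjugacy of Hodge cocharacters on $\widetilde\rH(X_2,\bR)$ into a $\bQ$-rational Hodge-structure isomorphism on each graded piece, which requires leveraging the rich deformation theory of hyperk\"ahler varieties (twistor families and the Torelli theorem) rather than any abstract Mumford--Tate statement.
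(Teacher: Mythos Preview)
Your setup through the $\bGSpin$ framework is sound, but the proof you sketch goes astray exactly at what you call the ``technical heart''. The deformation/density argument you propose does not work: two K3-type Hodge cocharacters on the same rational quadratic space are in general \emph{not} related by any $\bQ$-rational isometry, and they induce genuinely non-isomorphic Hodge structures on $\bGSpin$-representations (think of two K3 surfaces with the same underlying lattice but different transcendental lattices). Sweeping out the period domain via twistor families does not help, since you are trying to compare two \emph{fixed} points of that domain, not a generic one.

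What you are missing is that the results of \S\ref{subsec:verbitsky-component} already give you a $\bQ$-rational Hodge similitude $\phi\colon \widetilde\rH(X_1,\bQ)\isomto \widetilde\rH(X_2,\bQ)$ inducing $\Phi^\fg$ (Theorem~\ref{thm:isom-odd} for $d$ odd, Proposition~\ref{prop:isom-even} for $d$ even). With this in hand, your transported cocharacter $\tilde h_1$ already agrees with $h_2$ up to similitude, so there is nothing left to compare on $\widetilde\rH$. The only remaining defect is that $\phi$ need not respect the grading. The paper fixes this by a purely algebraic move: since $\phi$ is a Hodge map, the images $\phi(\alpha_1),\phi(\beta_1)$ lie in the algebraic part $N\subset \widetilde\rH(X_2,\bQ)$; Witt cancellation in $\SO(N)$ produces a $\psi\in\SO(\widetilde\rH(X_2,\bQ))$, trivial on the transcendental part, such that $\psi\phi$ sends $\alpha_1,\beta_1$ to scalar multiples of $\alpha_2,\beta_2$. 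Now $\psi\phi$ is a \emph{graded} Hodge similitude, hence $\ad(\psi\phi)$ carries the grading element $h_1$ to $h_2$. Lift $\psi$ to $\widetilde\psi\in\GSpin(\widetilde\rH(X_2,\bQ))$ using Hilbert~90 on the sequence~(\ref{eq:gspin-so}); by Lemmas~\ref{lemma:gspin-acts} and~\ref{lemma:respects-hodge} the composition $\widetilde\psi\circ\Phi^\rH$ respects both the Hodge structure and the grading, hence restricts to Hodge isomorphisms on each $\rH^i$. No deformation theory, twistor families, or Torelli is needed.
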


\begin{proof}
Consider the Lie algebra isomorphism $\Phi^\fg\colon \fg(X_1)\isomto \fg(X_2)$ from Theorem~\ref{bigthm:Lie-invariant}. By  Proposition~\ref{prop:similitude}, there exists a Hodge similitude $\phi\colon \widetilde\rH(X_1,\bQ) \isomto \widetilde\rH(X_2,\bQ)$ 
so that the square
\[
\begin{tikzcd}
\fso(\widetilde\rH(X_1,\bQ)) \arrow{r}{\ad(\phi)} \arrow{d} & \fso(\widetilde\rH(X_2,\bQ)) \arrow{d} \\
\fg(X_1)  \arrow{r}{\Phi^\fg} & \fg(X_2)
\end{tikzcd}
\]
commutes. Here the vertical maps are the isomorphisms from Theorem~\ref{thm:LLV}.

The K3-type Hodge structure $\widetilde\rH(X_2,\bQ)$ decomposes as $N\oplus T$, with $N$ and $T$ its algebraic and transcendental parts, respectively. The Hodge similitude $\phi$ maps the distinguished elements $\alpha_1$ and $\beta_1$ of $\widetilde\rH(X_1,\bQ)$ to ${N}$. By Witt cancellation, there exists a $\psi_N \in \SO(N)$ and $\lambda,\mu \in \bQ^\times$ such that $\psi_N\phi(\alpha_1) =  \lambda \alpha_2$ and $\psi_N\phi(\beta_1) = \mu \beta_2$. Extending by the identity, we find a Hodge isometry $\psi\in \SO(\widetilde\rH(X_2,\bQ))$ such that $\psi\phi\colon \widetilde\rH(X_1,\bQ)\isomto \widetilde\rH(X_2,\bQ)$ is a \emph{graded} Hodge similitude. In particular, the induced map $\psi\phi \colon \fg(X_1) \isomto \fg(X_2)$ is graded, and $\psi\phi$ maps the grading element $h_1\in \fg(X_1)$ to the grading element $h_2\in \fg(X_2)$.  

By the exact sequence (\ref{eq:gspin-so}) and by Hilbert~90, the element $\psi$ lifts to an element $\widetilde\psi \in \GSpin(\widetilde\rH(X_2,\bQ))$. By Lemma~\ref{lemma:gspin-acts} and Lemma~\ref{lemma:respects-hodge}, it induces automorphisms of the Hodge structures
$ \rH^\ev(X_2,\bQ)$ and $\rH^\odd(X_2,\bQ)$. Now by construction, the composition $\widetilde\psi \circ\Phi^\rH$ defines isomorphisms 
\begin{gather*}
	\widetilde\psi \circ \Phi^\rH\colon \rH^\ev(X_1,\bQ) \isomto \rH^\ev(X_2,\bQ) \\
	\widetilde\psi \circ\Phi^\rH\colon \rH^\odd(X_1,\bQ) \isomto \rH^\odd(X_2,\bQ) 
\end{gather*}
which respect both the grading and the Hodge structure, so that they induce isomorphisms of Hodge structures $\rH^i(X_1,\bQ) \isomto \rH^i(X_2,\bQ)$, for all $i$.
\end{proof}

\section{Topological $K$-theory}

\subsection{Topological $K$-theory and the Mukai vector}

In this paragraph we briefly recall some basic properties of topological $K$-theory of projective algebraic varieties. See \cite{AH61,AH62,AddingtonThomas14} for more details.

For every smooth and projective $X$ over $\bC$ we have a $\bZ/2\bZ$-graded abelian group
\[
	\rK_\top(X) := \rK_\top^0(X) \oplus \rK_\top^1(X).
\]
This is functorial for pull-back and proper pushforward, and carries a product structure.
The group $\rK^0_{\top}(X)$ is  the Grothendieck group of topological vector bundles on the differentiable manifold $X^\an$. Pull-back agrees with pull-back of vector bundles, and the product structure agrees with the tensor product of vector bundles. There is a natural $\bZ/2\bZ$-graded map
\[
	v_X^\top\colon \rK_\top(X) \to \rH(X,\bQ),
\]
which in even degree is given by $v_X^\top(\cF) = \sqrt{\Td_X} \cdot \ch(\cF)$. The image of $v_X^\top$ is a $\bZ$-lattice of full rank.

There is a `forgetful map' $\rK^0(X) \to \rK_\top(X)$ from the Grothendieck group of algebraic vector bundles (or equivalently of the triangulated category $\cD X$). This is compatible with pull back, multiplication, and proper pushfoward.  The mukai vector $v_X\colon \rK^0(X) \to \rH(X,\bQ)$ factors over $v_X^\top$.

If $\cP$ is an object in $\cD(X\times Y)$ then convolution with its class in $\rK_\top^0(X\times Y)$ defines a map $\Phi^\rK_\cP \colon \rK_\top(X) \to \rK_\top(Y)$, in such a way that the diagram
\[
\begin{tikzcd}
	\rK^0(X) \arrow{r}\arrow{d}{\Phi_\cP} & \rK_{\top}(X) \arrow{d}{\Phi_\cP^\rK} \arrow{r}{v_X^\top}
	& \rH(X,\bQ) \arrow{d}{\Phi_\cP^\rH} \\ 
	\rK^0(Y) \arrow{r} & \rK_{\top}(Y)\arrow{r}{v_Y^\top} & \rH(Y,\bQ).
\end{tikzcd}
\]
commutes.

\subsection{Equivariant topological $K$-theory}\label{subsec:equivariant-topological-K-theory}
The above formalism largely generalizes to an equivariant setting. Again we briefly recall the most important properties, see \cite{Segal68,AtiyahSegal68,BFM79,Joshua99} for more details. 

If $X$ is a smooth projective complex variety equipped with an action of a finite group $G$ we denote by $\rK^0_G(X)$  the Grothendieck group of $G$-equivariant algebraic vector bundles on $X$, or equivalently the Grothendieck group of  the bounded derived category $\cD_G X$ of $G$-equivariant coherent $\cO_X$-modules. This  is functorial for pull-back along $G$-equivariant maps and push-forward along $G$-equivariant proper maps.

Similarly, we have the $G$-equivariant topological $K$-theory   
\[
	\rK_{\top,G}(X) := \rK_{\top,G}^0(X) \oplus \rK_{\top,G}^1(X),
\]
where $\rK_{\top,G}^0(X)$ is the Grothendieck group of topological $G$-equivariant vector bundles. 

There is a natural map $\rK^0_G(X)\to \rK^0_{\top,G}(X)$ compatible with pull-back and tensor product. If $f\colon X\to Y$ is \emph{proper} and $G$-equivariant, then we have a push-forward map $f_\ast \colon \rK_{\top,G}(X) \to \rK_{\top,G}(Y)$.  There is a Riemann--Roch theorem \cite{AtiyahSegal68,Joshua99}, stating that the square
\[
\begin{tikzcd}
\rK^0_G(X)  \arrow{d}{f_\ast} \arrow{r} & \rK_{\top,G}(X) \arrow{d}{f_\ast} \\
\rK^0_G(Y) \arrow{r} & \rK_{\top,G}(Y)
\end{tikzcd}
\]
commutes.

Now assume that we have a finite group $G$ acting on $X$, and a finite group $H$ acting on $Y$. If
$\cP$ is an object in $\cD_{G\times H}(X\times Y)$, then convolution with $\cP$  induces a functor
$\Phi_\cP \colon \cD_G X \to \cD_H Y$, see \cite{Ploog07} for more details. Similarly, convolution with
the class of $\cP$ in $\rK^0_{\top,G\times H}(X\times Y)$ induces a map $\Phi_\cP^\rK\colon\rK_{\top,G}(X) \to \rK_{\top,H}(Y)$. These satisfy the usual Fourier--Mukai calculus, and moreover they are compatible, in the sense that the square
\[
\begin{tikzcd}
	\rK^0_G(X) \arrow{r}\arrow{d}{\Phi_\cP} & \rK_{\top,G}(X) \arrow{d}{\Phi_\cP^\rK} \\
	\rK^0_H(Y) \arrow{r} & \rK_{\top,H}(Y).
\end{tikzcd}
\]
commutes.

\section{Cohomology of the Hilbert square of a K3 surface}

Let $S$ be a K3 surface and $X=S^{[2]}$ its Hilbert square. In the coming few paragraphs we recall the structure of the cohomology of $X$ in terms of the cohomology of $S$. See \cite{Beauville83,EGL,Huybrechts03} for more details.

\subsection{Line bundles on the Hilbert square}\label{subsec:linebundles}

Let $G=\{1,\sigma\}$ be the group of order two, acting on $S\times S$ by permuting the factors.
The Hilbert square $X$ sits in a diagram 
\[
\begin{tikzcd}
& Z \arrow{rd}{q} \arrow[swap]{ld}{p} &  \\
S\times S & & X
\end{tikzcd}
\]
where $p \colon Z\to S\times S$ is the blow-up along the diagonal, and where $q\colon Z\to X$ is the quotient map for the natural action of $G$ on $Z$. Denote by $R \subset Z$ the exceptional divisor of $p$. Then $R$ equals the ramification locus of $q$. We have $q_\ast \cO_Z = \cO_X \oplus \cE$ for some line bundle $\cE$, and $q^\ast \cE \cong \cO_Z(-R)$.

If $\cL$ is a line bundle on $S$ then 
\[
	\cL_2 := \big( q_\ast p^\ast (\cL\boxtimes \cL) \big)^G 
\]
is a line bundle on $X$.  The map
\[
	\Pic(S) \oplus \bZ \longto \Pic(X)\colon  (\cL,n) \mapsto \cL_2 \otimes \cE^{\otimes n}
\]
is an isomorphism. 

\subsection{Cohomology of the Hilbert square}\label{subsec:rational-cohomology}
There is an isomorphism
\[
	\rH^2(S,\bZ) \oplus \bZ\delta \isomto \rH^2(X,\bZ)
\]
with the property that $c_1(\cL)$ is mapped to $c_1(\cL_2)$, and $\delta$ is mapped to $c_1(\cE)$. We will use this isomorphism to identify $\rH^2(S,\bZ) \oplus \bZ\delta$ with $\rH^2(X,\bZ)$.
The Beauville--Bogomolov form on $\rH^2(X,\bZ)$ satisfies
\[
	b_X(\lambda,\lambda) = b_S(\lambda,\lambda),\quad b_X(\lambda,\delta)=0,\quad b_X(\delta,\delta)=-2
\]
for all $\lambda \in \rH^2(S,\bZ)$.

Cup product defines an isomorphism $\Sym^2 \rH^2(X,\bQ) \isomto \rH^4(X,\bQ)$. By Poincar\'e duality, there is a unique $q_X \in \rH^4(X,\bQ)$ representing the Beauville--Bogomolov form, in the sense that
\begin{equation}\label{eq:class-q}
	\int_X q_X \lambda_1\lambda_2 = b_X(\lambda_1,\lambda_2)
\end{equation}
for all $\lambda_1,\lambda_2 \in \rH^2(X,\bZ)$. Multiplication by $q_X$ defines an isomorphism $\rH^2(X,\bQ) \to \rH^6(X,\bQ)$ and for all
$\lambda_1,\lambda_2,\lambda_3$ in $\rH^2(X,\bQ)$ 
we have
\begin{equation}\label{eq:q-lambda}
	\lambda_1\lambda_2\lambda_3 = 
		b_X(\lambda_1,\lambda_2) q_X \lambda_3 + 
		b_X(\lambda_2,\lambda_3) q_X \lambda_1 + 
		b_X(\lambda_3,\lambda_1) q_X \lambda_2
\end{equation}
in $\rH^6(X,\bQ)$. Finally, for all $\lambda \in \rH^2(X,\bQ)$ the Fujiki relation
\begin{equation}\label{eq:fujiki}
	\int_X \lambda^4 = 3 b_X(\lambda,\lambda)^2
\end{equation}
holds.

\subsection{Todd class of the Hilbert square}

\begin{proposition}\label{prop:Todd}
$\Td_X = 1 + \frac{5}{2} q_X +  3[\pt]$.
\end{proposition}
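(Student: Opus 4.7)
Since $X$ is holomorphic symplectic we have $c_1(T_X) = 0$, so the odd-degree Chern polynomial contributions to $\Td_X$ vanish, and the universal formula collapses to $\Td_X = 1 + \tfrac{1}{12}c_2(X) + \tfrac{1}{720}\bigl(3c_2(X)^2 - c_4(X)\bigr)$, supported only in degrees $0$, $4$, and $8$. Since $\rH^8(X,\bQ) = \bQ[\pt]$, the degree-$8$ component is automatically of the form $B[\pt]$ for some $B \in \bQ$. My plan is to first show that the degree-$4$ component is a rational multiple of $q_X$, and then to pin down both coefficients via a single Hirzebruch--Riemann--Roch computation on a line bundle of the form $\cL_2$.

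For the first step, I would invoke Fujiki's theorem applied to the characteristic class $c_2(X)$: for any deformation-invariant class $c \in \rH^{4k}(X,\bQ)$ on a hyperk\"ahler variety of dimension $2d$, the function $\lambda \mapsto \int_X c\,\lambda^{2d-2k}$ is proportional to $b_X(\lambda,\lambda)^{d-k}$. Taking $d = 2$, $k = 1$, and $c = c_2(X)$ yields $\int_X c_2(X)\,\lambda^2 = f\,b_X(\lambda,\lambda)$ for some $f \in \bQ$. Polarising in $\lambda$ gives $\int_X c_2(X)\,\lambda\mu = f\,b_X(\lambda,\mu)$ for all $\lambda,\mu \in \rH^2(X,\bQ)$. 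Since cup product yields an isomorphism $\Sym^2 \rH^2(X,\bQ) \isomto \rH^4(X,\bQ)$ (from \S~\ref{subsec:rational-cohomology}), elements of $\rH^4(X,\bQ)$ are detected by their cup products with classes $\lambda\mu$; comparing with the defining property (\ref{eq:class-q}) of $q_X$ forces $c_2(X) = f\,q_X$. Hence $\Td_X = 1 + \gamma\,q_X + B[\pt]$ with $\gamma = f/12$.

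For the second step, I would apply HRR to a line bundle $\cL_2$ on $X$ pulled from $\cL = \cO_S(D)$ via the construction of \S~\ref{subsec:linebundles}. Setting $\lambda = c_1(\cL_2)$, the description of $b_X$ gives $b_X(\lambda,\lambda) = D^2$; the Fujiki relation (\ref{eq:fujiki}) gives $\int_X \lambda^4 = 3D^4$; and (\ref{eq:class-q}) gives $\int_X q_X\,\lambda^2 = D^2$. Integrating the degree-$8$ piece of $\exp(\lambda)\cdot \Td_X$ yields
\[
\chi(X,\cL_2) \;=\; \tfrac{1}{8}D^4 + \tfrac{\gamma}{2}D^2 + B.
\]
On the other hand, for $\cL$ sufficiently ample one has $\rH^0(X,\cL_2) = \Sym^2 \rH^0(S,\cL)$ with higher cohomology vanishing, whence $\chi(X,\cL_2) = \binom{h^0(S,\cL)+1}{2} = \binom{\chi(S,\cL)+1}{2}$; both sides of this identity are polynomial in $D^2$, so it extends to all line bundles. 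With $\chi(S,\cL) = 2 + D^2/2$ for the K3 surface $S$, the right-hand side evaluates to $3 + \tfrac{5}{4}D^2 + \tfrac{1}{8}D^4$. Matching the three coefficients gives $B = 3$ and $\gamma = 5/2$, as claimed.

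The hard part is establishing the Euler characteristic identity $\chi(S^{[2]},\cL_2) = \binom{\chi(S,\cL)+1}{2}$. The slickest approach is the ample-plus-polynomial-extension sketch above, though one must still justify vanishing of higher cohomology of $\cL_2$ in the ample range, for instance via Kodaira-type vanishing on $X$ combined with the pushforward formula $\pi_\ast \cL_2 = (\cL\boxtimes\cL)^G$ along the Hilbert--Chow map; alternatively one can use the Bridgeland--King--Reid equivalence $\cD^G(S\times S) \isomto \cD X$ to translate the computation into an equivariant Euler characteristic on $S\times S$.
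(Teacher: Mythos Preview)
Your proof is correct and follows essentially the same strategy as the paper: first argue that $\Td_X = 1 + s\,q_X + t[\pt]$ for some $s,t\in\bQ$, then determine $s$ and $t$ by matching $\chi(X,\cL_2)$ computed via Hirzebruch--Riemann--Roch against its known value as a polynomial in $b(\lambda,\lambda)$. The only differences are cosmetic: the paper obtains the shape of $\Td_X$ from monodromy invariance rather than from your Fujiki-type argument for $c_2$, and it simply cites \cite{EGL} or \cite{Huybrechts03} for the formula $\chi(X,\cL_2)=\tfrac{1}{8}b(\lambda,\lambda)^2+\tfrac{5}{4}b(\lambda,\lambda)+3$ rather than deriving it from $\binom{\chi(S,\cL)+1}{2}$ as you do.
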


\begin{proof}See also \cite[\S 23.4]{Huybrechts03}.
Since the Todd class is invariant under the monodromy group of $X$, we necessarily have
\[
	\Td_X = 1 + s q_X + t [\pt]
\]
for some $s,t\in \bQ$. By Hirzebruch--Riemann--Roch, for every line bundle $L$ on $S$ with $c_1(L)=\lambda$ we have
\[
	\chi(X,L_2) = \int_X \ch(\lambda) \Td_X = 
	\frac{1}{24} \int_X {\lambda^4} + \frac{s}{2} \int_X  \lambda^2 q_X + t.
\]
By the relations (\ref{eq:fujiki}) and (\ref{eq:class-q}) the right hand side reduces to 
\[
	\frac{b(\lambda,\lambda)^2}{8} + \frac{s b(\lambda,\lambda)}{2} + t.
\]
By \cite[\S 23.4]{Huybrechts03} or \cite[5.1]{EGL} the left hand side computes to
\[
	\chi(X,L_2) = \frac{b(\lambda,\lambda)^2}{8} + \frac{5}{4}b(\lambda,\lambda) + 3.
\]
Comparing the two expressions yields the result.
\end{proof}

\section{Derived McKay correspondence}

\subsection{The derived McKay correspondence}
As in \S~\ref{subsec:linebundles}, we consider a K3 surface $S$, its Hilbert square $X=S^{[2]}$,  the maps 
$p\colon Z\to S\times S$ and $q\colon Z\to X$, and the group  $G=\{1,\sigma\}$ acting on $S\times S$ and $Z$. 

The \emph{derived McKay correspondence} \cite{BKR} is the triangulated functor $\BKR\colon \cD^b(X) \to \cD^b_G(S\times S)$ given as the composition
\[
	\BKR\colon \cD X \overset{q^\ast}\longto  \cD_G(Z) \overset{p_\ast}\longto \cD_G(S\times S),
\]
where the first functor maps $\cF$ to $q^\ast \cF$ equipped with the trivial $G$-linearization. By \cite[1.1]{BKR} and \cite[5.1]{Haiman} the functor $\BKR$ is an equivalence of categories. Its inverse is given by:
\begin{proposition}\label{prop:BKR-inverse}
The inverse equivalence of $\BKR$ is given by
\[
	\BKR^{-1}(\cF) =  (q_\ast p^\ast \cF)^{\sigma = -1} \otimes \cE^{-1}
\]
for all $\cF$ in $\cD_G(S\times S)$. 
\end{proposition}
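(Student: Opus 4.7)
Since $\BKR$ is an equivalence (by Bridgeland--King--Reid and Haiman), its inverse coincides with its right adjoint. The plan is therefore to compute the right adjoint of the composition $\BKR = p_\ast \circ q^\ast$ as $\BKR^{R} = (q^\ast)^R \circ p^{!}$ and then rewrite the answer into the claimed shape. The two factors are standard: because $q$ is $G$-equivariant with trivial $G$-action on $X$, the adjunction $\Hom_{\cD_G Z}(q^\ast \cG, \cF) = \Hom_Z(q^\ast \cG, \cF)^G = \Hom_X(\cG, (q_\ast \cF)^G)$ yields $(q^\ast)^R(\cF) = (q_\ast \cF)^G$; and since $p$ is the blow-up of a smooth codimension-$2$ subvariety, the relative dualizing line bundle is $\omega_{Z/S\times S}=\cO_Z(R)$, giving $p^{!}(\cF) = p^\ast \cF \otimes \cO_Z(R)$.

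The key step is a sign-twist comparison, which I would state as a lemma: as $G$-equivariant line bundles on $Z$,
\[
  \cO_Z(R) \,\cong\, q^\ast \cE^{-1} \otimes \chi,
\]
where $\chi$ denotes the sign character of $G$. To prove this, I would argue étale-locally around a ramification point, modelling $q$ as $\Spec A[y]/(y^2 - a) \to \Spec A$ with $\sigma(y) = -y$. The ramification ideal is $(y)$, so under the identification $\cO_Z(-R) = (y)$ the natural $\sigma$-action sends the generator $y$ to $-y$. On the other hand, writing $\cE = (q_\ast \cO_Z)^{\sigma = -1} = A\cdot y$, the line bundle $q^\ast \cE$ is generated by $y\otimes 1$, which is $\sigma$-invariant (the $G$-action lives on the second tensor factor). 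The natural map $q^\ast \cE \to \cO_Z(-R)$ sending $y \otimes 1 \mapsto y$ is thus an isomorphism of line bundles that intertwines the two equivariant structures by the sign character.

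Granting this, the computation concludes by a projection-formula calculation:
\begin{align*}
  \BKR^{R}(\cF)
    &= \bigl(q_\ast(p^\ast \cF \otimes \cO_Z(R))\bigr)^G \\
    &= \bigl(q_\ast(p^\ast \cF \otimes q^\ast \cE^{-1} \otimes \chi)\bigr)^G \\
    &= \bigl((q_\ast p^\ast \cF)\otimes \chi\bigr)^G \otimes \cE^{-1} \\
    &= (q_\ast p^\ast \cF)^{\sigma = -1} \otimes \cE^{-1},
\end{align*}
using $(M\otimes \chi)^G = M^{\sigma=-1}$ for any $G$-module $M$ (valid since $|G|=2$ is invertible). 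As a sanity check one may verify the formula on $\cF = \cO_{S\times S}$: then $\BKR(\cO_X) = Rp_\ast \cO_Z = \cO_{S\times S}$, and the right-hand side evaluates to $(q_\ast \cO_Z)^{\sigma = -1} \otimes \cE^{-1} = \cE\otimes \cE^{-1} = \cO_X$.

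The main obstacle is the sign-twist lemma: everything else is formal adjunction, the projection formula, and the standard formula for the dualizing complex of a blow-up of a smooth centre. The subtle point is that the two natural $G$-equivariant structures on the underlying line bundle $\cO_Z(-R)$ — one coming from the inclusion of divisors, the other from pulling back the distinguished line bundle $\cE$ on $X$ — differ precisely by the sign character, and it is this discrepancy that produces the $\sigma=-1$ eigenspace (rather than the $G$-invariants) in the final formula.
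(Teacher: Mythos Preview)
Your argument is correct. The paper's own proof is a one-line citation to \cite[4.1]{Krug15}, combined with the projection formula for $q$ and the (non-equivariant) isomorphism $q^\ast\cE\cong\cO_Z(-R)$. Your route is different only in that it is self-contained: rather than invoking Krug's description of the inverse, you compute the right adjoint directly as $(q_\ast(-))^G\circ p^{!}$ and then carry out the same projection-formula rewriting that the paper alludes to. The genuinely new content in your write-up is the explicit analysis of the $G$-equivariant structure on $\omega_{Z/S\times S}\cong\cO_Z(R)$ versus $q^\ast\cE^{-1}$, which pinpoints the sign character and hence explains why the $\sigma=-1$ eigenspace (rather than the invariants) appears. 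This is exactly the bookkeeping hidden behind the paper's citation, so the two proofs are morally the same; yours trades a reference for a local computation and is the more transparent of the two.

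One small remark: when you invoke $p^{!}\cF = p^\ast\cF\otimes\cO_Z(R)$ in the equivariant setting, the $G$-structure on $\cO_Z(R)$ that is relevant is the one coming from $\omega_{Z/S\times S}=\omega_Z\otimes p^\ast\omega_{S\times S}^{-1}$, not a priori the ideal-sheaf structure. In the blow-up chart (coordinates $u_1,u_2,v_1,w$ with $\sigma(v_1)=-v_1$) one checks that $\sigma$ acts on a local generator of $\omega_Z$ by $-1$ and trivially on $p^\ast\omega_{S\times S}$, so the two structures agree; it would strengthen the exposition to say this rather than to identify the structures implicitly via your double-cover model of $q$.
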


\begin{proof}
This follows from combining \cite[4.1]{Krug15} with the projection formula for $q$ and the fact that $q^\ast \cE \cong \cO_Z(-R)$.
\end{proof}

Now let $S_1$ and $S_2$ be K3 surfaces with Hilbert squares $X_1$ and $X_2$. As was observed by Ploog \cite{Ploog05}, any  equivalence $\Phi\colon \cD S_1 \isomto \cD S_2$  induces an equivalence $\cD_G(S_1\times S_2) \isomto \cD_G(S_2\times S_2)$, and hence via  the derived McKay correspondence an equivalence $\Phi^{[2]}\colon \cD X_1 \isomto \cD X_2$.

\subsection{Topological $K$-theory of the Hilbert square}

\begin{theorem}\label{thm:topological-BKR}
The composition
\[
	\BKR_\top\colon 
	\rK_\top(X) \overset{q^\ast}\longto  \rK_{\top,G}(Z) \overset{p_\ast}\longto \rK_{\top,G}(S\times S)
\]
is an isomorphism.
\end{theorem}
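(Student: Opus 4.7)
The plan is to promote the McKay equivalence and its inverse to topological $K$-theory via the Fourier--Mukai formalism of \S\ref{subsec:equivariant-topological-K-theory}.

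First, recognise $\BKR$ itself as a Fourier--Mukai transform. Let $i = (q,p) \colon Z \hookrightarrow X \times (S \times S)$ be the closed immersion (equivariant for the $G$-swap on $S \times S$ and trivial on $X$) and set $\cP := i_\ast \cO_Z \in \cD_G(X \times (S \times S))$. The projection formula yields
\[
	(\pi_{S\times S})_\ast \bigl( \pi_X^\ast \cF \otimes \cP \bigr) = p_\ast q^\ast \cF = \BKR(\cF),
\]
so $\BKR$ is the Fourier--Mukai transform with kernel $\cP$. By the compatibility square of \S\ref{subsec:equivariant-topological-K-theory}, the topological transform $\Phi^\rK_\cP$ induced by $[\cP] \in \rK^0_{\top, G}(X \times (S\times S))$ agrees with $\BKR_\top$.

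Next, by an equivariant version of Orlov's theorem (or directly from Proposition~\ref{prop:BKR-inverse}, whose ingredients --- equivariant pullback, proper pushforward, tensor with the line bundle $\cE^{-1}$, and extraction of the $\sigma=-1$ isotypic piece --- are each of Fourier--Mukai type), the inverse $\BKR^{-1}$ is likewise the Fourier--Mukai transform associated to some kernel $\cQ \in \cD_G((S\times S) \times X)$. Since $\BKR$ and $\BKR^{-1}$ are mutually inverse equivalences of derived categories, standard Fourier--Mukai calculus gives isomorphisms of convolution kernels
\[
	\cQ \star \cP \simeq \Delta_\ast \cO_X, \qquad \cP \star \cQ \simeq \Delta_\ast \cO_{S\times S}
\]
in the appropriate equivariant derived categories.

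Finally, pass to topological $K$-theory. The forgetful map from algebraic to topological equivariant $K$-theory is compatible with pullback, proper pushforward, and tensor, hence with convolution of kernel classes. Invoking Fourier--Mukai calculus in equivariant topological $K$-theory, the topological transforms $\Phi^\rK_\cP$ and $\Phi^\rK_\cQ$ compose to the transforms with kernels $[\Delta_\ast \cO_X]$ and $[\Delta_\ast \cO_{S\times S}]$, both of which are the identity. Hence $\BKR_\top = \Phi^\rK_\cP$ is an isomorphism with inverse $\Phi^\rK_\cQ$. The main technical point is this last appeal: the validity of Fourier--Mukai calculus in \emph{equivariant} topological $K$-theory (composition of transforms corresponds to convolution of kernels, identity kernels induce identities) is part of the formalism asserted in \S\ref{subsec:equivariant-topological-K-theory}, and while carrying it out requires some care because of the non-free $G$-action, no new ideas beyond those of the algebraic case are needed.
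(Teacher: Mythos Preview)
Your proposal is correct and follows essentially the same route as the paper: both represent $\BKR$ and its inverse by equivariant Fourier--Mukai kernels $\cP$, $\cQ$, note that the convolution identities $\cQ\star\cP\simeq\Delta_\ast\cO_X$ and $\cP\star\cQ\simeq\Delta_\ast\cO_{S\times S}$ pass to equalities in equivariant topological $K$-theory, and conclude that $\Phi^\rK_\cQ$ is a two-sided inverse to $\BKR_\top=\Phi^\rK_\cP$. Your version simply spells out the kernel $\cP=i_\ast\cO_Z$ and the compatibility steps more explicitly than the paper does.
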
	

\begin{proof}
(See also \cite[\S~10]{BKR}). 
This is a purely formal consequence of the calculus of equivariant Fourier--Mukai transforms sketched in \S~\ref{subsec:equivariant-topological-K-theory}. The functor $\BKR$ and its inverse are given by kernels $\cP \in \cD_G(X\times S\times S)$ and $\cQ\in \cD_G(S\times S\times X)$. The map $\BKR_\top$ is given by convolution with the class of $\cP$ in $\rK^0_{\top,G}(X\times S\times S)$. The identities in $\rK^0(X\times X)$ and $\rK^0_{G\times G}(S\times S\times S\times S)$ witnessing that $\cP$ and $\cQ$ are mutually inverse equivalences  induce analogous identities in $\rK^0_\top$. These show that convolution with the class of $\cQ$ defines a two-sided inverse to $\BKR_\top$. 
\end{proof}

Consider the map
\[	
	\psi^\rK\colon \rK^0_\top(X) \to \rK^0_{\top}(S\times S)^G 
\]
obtained as the composition of $\BKR_\top$ and the forgetful map from $\rK^0_{\top,G}(S\times S)$ to $\rK^0_\top(S\times S)$. Also, consider the map
\[
	\theta^\rK\colon \rK^0_\top(S) \to \rK^0_\top(X),\, 
	[\cF] \mapsto \BKR^{-1}_\top\big([\cF \boxtimes \cF, 1] - [\cF \boxtimes \cF, -1]\big),
\]
where $[\cF\boxtimes \cF,\pm 1]$ denotes the class of the topological vector bundle $\cF \boxtimes \cF$ equipped with $\pm$  the natural $G$-linearisation. 

By construction, these maps are `functorial' in $\cD S$, in the following sense:

\begin{proposition}\label{prop:theta-equivariance}
 If $\Phi\colon \cD S_1 \isomto \cD S_2$ is a derived equivalence between K3 surfaces, and  $\Phi^{[2]}\colon \cD X_1 \isomto \cD X_2$ the induced equivalence between their Hilbert squares, then the squares 
\[
\begin{tikzcd}
\rK^0_\top(X_1) \arrow{r}{\psi^\rK} \arrow{d}{\Phi^{[2],\rK}} 
	& \rK^0_{\top}(S_1\times S_1)^G  \arrow{d}{\Phi^\rK\otimes\Phi^\rK}
&& \rK^0_\top(S_1) \arrow{r}{\theta^\rK} \arrow{d}{\Phi^\rK} 
	& \rK^0_\top(X_1)  \arrow{d}{\Phi^{[2],\rK}} \\
\rK^0_\top(X_2) \arrow{r}{\psi^\rK}  & \rK^0_{\top}(S_2\times S_2)^G
&& \rK^0_\top(S_2) \arrow{r}{\theta^\rK} & \rK^0_\top(X_2) 
\end{tikzcd}
\] 
commute.\qed
\end{proposition}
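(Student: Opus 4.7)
The plan is to deduce Proposition~\ref{prop:theta-equivariance} as a formal consequence of Ploog's construction of $\Phi^{[2]}$ together with the calculus of equivariant topological Fourier--Mukai transforms developed in \S\ref{subsec:equivariant-topological-K-theory}. By construction, if $\cP \in \cD(S_1\times S_2)$ is a Fourier--Mukai kernel for $\Phi$, then $\Phi^{[2]}$ is the composition
\[
\cD X_1 \xrightarrow{\BKR} \cD_G(S_1\times S_1) \xrightarrow{(\Phi\boxtimes\Phi)_G} \cD_G(S_2\times S_2) \xrightarrow{\BKR^{-1}} \cD X_2,
\]
where $(\Phi\boxtimes\Phi)_G$ is the equivariant Fourier--Mukai transform with kernel $\cP\boxtimes\cP\in \cD_{G\times G}(S_1^2\times S_2^2)$, equipped with the natural swap $G$-linearisation (restricted along the diagonal $G\to G\times G$). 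Passing to topological $K$-theory via the square at the end of \S\ref{subsec:equivariant-topological-K-theory} gives the identity $\Phi^{[2],\rK} = \BKR^{-1}_\top \circ (\Phi\boxtimes\Phi)^\rK_G \circ \BKR_\top$.

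For the first square, the key observation is that the forgetful map $\rK^0_{\top,G}(S_i\times S_i) \to \rK^0_{\top}(S_i\times S_i)$ intertwines the equivariant convolution $(\Phi\boxtimes\Phi)^\rK_G$ with the non-equivariant convolution by the underlying class of $\cP\boxtimes \cP$, which under the K\"unneth isomorphism $\rK^0_\top(S_i^2) \cong \rK^0_\top(S_i)\otimes \rK^0_\top(S_i)$ is just $\Phi^\rK\otimes \Phi^\rK$. This compatibility is essentially built into the formalism recalled in \S\ref{subsec:equivariant-topological-K-theory}. Combining this with Theorem~\ref{thm:topological-BKR} and the definition $\psi^\rK = \text{forget}\circ \BKR_\top$, the commutativity follows by a direct diagram chase, since the inner $\BKR_\top \circ \BKR^{-1}_\top$ cancels.

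For the second square I would use the same factorisation of $\Phi^{[2],\rK}$ and reduce to the statement that
\[
(\Phi\boxtimes\Phi)^\rK_G \bigl([\cF\boxtimes\cF,\pm 1]\bigr) = [\Phi\cF\boxtimes\Phi\cF,\,\pm 1]
\]
in $\rK^0_{\top,G}(S_2\times S_2)$. This is precisely the statement that Ploog's equivariant functor $\Phi\boxtimes\Phi$ sends the external square $\cF\boxtimes\cF$ with its (sign-twisted) swap linearisation to the corresponding class on the target. It follows from the naturality of the swap $G$-structure: the kernel $\cP\boxtimes\cP$ carries a canonical swap equivariance by virtue of being an external square, and its convolution with $[\cF\boxtimes\cF,\pm 1]$ transports the $\pm$-linearisation to the analogous $\pm$-linearisation on $\Phi\cF\boxtimes \Phi\cF$.

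The main obstacle is purely bookkeeping, namely verifying that the swap $G$-structure on $\cP\boxtimes\cP$ used in the definition of $(\Phi\boxtimes\Phi)_G$ indeed matches, under convolution, the natural swap linearisation on $\cF\boxtimes\cF$ (both in its trivial and sign-twisted variant). Once this compatibility is made explicit---which is a direct check on the respective linearisation data, implicit in \cite{Ploog07} and \cite{Ploog05}---the proposition follows. No further geometric input is needed; in particular neither the derived McKay correspondence (beyond its property of being an equivalence) nor any specifically K3-theoretic fact enters the argument.
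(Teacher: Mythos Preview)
Your proposal is correct and matches the paper's approach exactly: the paper treats this proposition as immediate from the construction (hence the bare \qed with no proof), and what you have written is precisely the careful unpacking of the phrase ``by construction'' that precedes the statement. The decomposition $\Phi^{[2],\rK} = \BKR_\top^{-1}\circ(\Phi\boxtimes\Phi)_G^\rK\circ\BKR_\top$, the compatibility of the forgetful map with convolution, and the transport of the $\pm$-linearisation under $\cP\boxtimes\cP$ are exactly the ingredients implicit in the paper's formalism of \S\ref{subsec:equivariant-topological-K-theory}.
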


\begin{proposition}\label{prop:topological-K-theory}
The sequence
\[
	0 \longto  \rK^0_\top(S) \otimes_\bZ\bQ
	\overset{\theta^\rK}\longto 
	 \rK^0_{\top}(X) \otimes_\bZ\bQ
	   \overset{\psi^\rK}\longto \rK^0_{\top}(S\times S)^G \otimes_\bZ\bQ
	\longto 0
\]
is exact.
\end{proposition}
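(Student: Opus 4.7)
The plan is to apply the isomorphism $\BKR_\top$ of Theorem~\ref{thm:topological-BKR} to transport the sequence into one for $G$-equivariant topological $K$-theory of $S\times S$, and then to exploit the rational isotypical decomposition under $R(G)\otimes\bQ$ together with the Atiyah--Segal localization theorem at the fixed loci of $\sigma$.

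Via $\BKR_\top$ the sequence becomes
\[
	0 \to \rK^0_\top(S)\otimes\bQ \xrightarrow{\widetilde\theta} \rK^0_{\top,G}(S\times S)\otimes\bQ \xrightarrow{F} \rK^0_\top(S\times S)^G \otimes\bQ \to 0,
\]
where $\widetilde\theta([\cF]) = [\cF\boxtimes\cF,1] - [\cF\boxtimes\cF,-1]$ and $F$ is the forgetful map. Consider the idempotents $e_\pm := (1\pm[\mathrm{sgn}])/2 \in R(G)\otimes\bQ$ and the resulting eigenspace decomposition $\rK^0_{\top,G}(S\times S)\otimes\bQ = \rK^+\oplus\rK^-$. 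Since $[\mathrm{sgn}]\cdot[\cE,\rho]=[\cE,-\rho]$, the class $\widetilde\theta(\cF)$ lies in $\rK^-$; and since $F([\cE,\rho])=[\cE]=F([\cE,-\rho])$, the map $F$ vanishes on $\rK^-$. Hence exactness reduces to showing that both restrictions $F|_{\rK^+}\colon \rK^+ \to \rK^0_\top(S\times S)^G\otimes\bQ$ and $\widetilde\theta\colon \rK^0_\top(S)\otimes\bQ \to \rK^-$ are isomorphisms.

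For $F|_{\rK^+}$, the map $[\cE]\mapsto \tfrac{1}{2}[\cE\oplus \sigma^*\cE,\mathrm{swap}]$ lands in $\rK^+$ (the involution $\mathrm{diag}(1,-1)$ intertwines $\mathrm{swap}$ with $-\mathrm{swap}$) and is a rational section of $F$, giving surjectivity. Injectivity comes from the Atiyah--Segal rational localization theorem, which for $G=\bZ/2$ acting on a compact $G$-space $Y$ gives a canonical isomorphism
\[
	\rK^0_{\top,G}(Y)\otimes\bQ \;\cong\; \bigl(\rK^0_\top(Y)^G\otimes\bQ\bigr)\,\oplus\,\bigl(\rK^0_\top(Y^\sigma)\otimes\bQ\bigr),
\]
identifying $\rK^+$ with the first summand. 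The same localization identifies $\rK^- \cong \rK^0_\top(S)\otimes\bQ$ via restriction to the diagonal $\Delta\cong S$ followed by projection onto the $(-1)$-eigenspace of $\sigma$. Under this identification a direct local calculation identifies $\widetilde\theta([\cF])$ with $[\wedge^2\cF]-[\Sym^2\cF] = -\psi^2([\cF])$, where $\psi^2$ is the second Adams operation. Since $\psi^2$ acts as multiplication by $2^i$ on $\rH^{2i}(S,\bQ)$ via the Chern character, it is a rational isomorphism on $\rK^0_\top(S)\otimes\bQ$, and so is $\widetilde\theta$.

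The main technical obstacle is the invocation of the Atiyah--Segal rational localization theorem to split $\rK^0_{\top,G}(S\times S)\otimes\bQ$ cleanly according to fixed loci of $\sigma$ and to match $\rK^+\oplus\rK^-$ with the two summands in the correct order; once this is in place, identifying $\widetilde\theta$ with $-\psi^2$ under the fixed-locus projection is a direct local computation, and the verification that $\psi^2$ is a rational isomorphism is immediate from the Chern character.
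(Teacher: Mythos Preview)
Your argument is correct and converges with the paper's at the decisive step: both identify the restriction of $\widetilde\theta$ to the diagonal with (a scalar multiple of) the second Adams operation $\psi^2$, which is a rational isomorphism. The surrounding architecture is genuinely different, however. The paper does not invoke Atiyah--Segal localization at all: it first verifies by hand that $\widetilde\theta$ is additive (the cross term vanishes because the two swap-linearizations on $\cF_1\boxtimes\cF_2\oplus\cF_2\boxtimes\cF_1$ are $\bZ$-conjugate), then observes that $\psi^\rK$ is surjective because the Schur multiplier of $G$ is trivial, and finishes by a dimension count reducing everything to the injectivity of $\theta^\rK$, established via the Adams operation. Your route replaces the Schur-multiplier remark and the dimension count by the structural decomposition $\rK^0_{\top,G}(Y)\otimes\bQ\cong\rK^0_\top(Y)^G\oplus\rK^0_\top(Y^\sigma)$, which simultaneously identifies $\rK^+$ and $\rK^-$ with the outer terms of the sequence; this is more conceptual but imports a heavier theorem, whereas the paper's proof is entirely self-contained. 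One expository point: you use $\widetilde\theta$ as a map on $K$-classes from the outset, but the formula $[\cF]\mapsto[\cF\boxtimes\cF,1]-[\cF\boxtimes\cF,-1]$ is \emph{a priori} only defined on bundles and looks quadratic; additivity does follow from your identification with $\psi^2$, but you should say so explicitly. (Also, depending on the normalization of the Atiyah--Segal isomorphism, the map comes out as $\pm 2\psi^2$ rather than $-\psi^2$; this does not affect the conclusion.)
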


\begin{proof}
In the proof, we will  implicitly identify $\rK_{\top,G}(S\times S)$ and $\rK_\top(X)$.

Note that the map $\theta^\rK$ is additive. Indeed, let $\cF_1$ and $\cF_2$ be (topological) vector bundles on $S$.  Then the cross term
$\theta^{\rK}[\cF_1\oplus \cF_2] - \theta^{\rK}[\cF_1] - \theta^{\rK}[\cF_2]$ computes to

\[
	\Big[ \cF_1\boxtimes \cF_2 \oplus \cF_2 \boxtimes \cF_1, \,
		\left(\begin{smallmatrix} 0 & 1 \\1 & 0\end{smallmatrix}\right) \Big] - 
	\Big[ \cF_1\boxtimes \cF_2 \oplus \cF_2 \boxtimes \cF_1, \,
		\left(\begin{smallmatrix} 0 & -1 \\-1 & 0\end{smallmatrix}\right) \Big],
\]
which vanishes because the matrices $\left(\begin{smallmatrix} 0 & 1 \\1 & 0\end{smallmatrix}\right)$
and $\left(\begin{smallmatrix} 0 & -1 \\-1 & 0\end{smallmatrix}\right)$ are conjugated over $\bZ$. 

The composition $\psi^\rK \theta^\rK$ vanishes, and since the Schur multiplier of $G$ is trivial, the map $\psi^\rK$ is surjective. Computing the $\bQ$-dimensions one sees that it suffices to show that $\theta^\rK$ is injective. 

Pulling back to the diagonal and taking invariants defines a map
\[
	\rK^0_\top(S) \overset{\theta^\rK}{\longto} \rK^0_{\top,G}(S\times S)
	\overset{\Delta^\ast}\longto \rK^0_{\top,G}(S) \overset{(-)^G}\longto \rK^0_\top(S).
\]
This composition computes to
\[
	[\cF] \mapsto \ [\Sym^2 \cF] - [\wedge^2 \cF].
\] 
This coincides with the second Adams operation, which is injective on  $\rK^0_\top(S)\otimes_\bZ \bQ$, 
since it has eigenvalues $1$, $2$, and $4$. We conclude that $\theta^\rK$ is injective, and the proposition follows.
\end{proof}

\begin{remark}One can show that the sequence
\[
	0 \longto  \rK^0_\top(S) 
	\overset{\theta^\rK}\longto 
	 \rK^0_{\top}(X) 
	   \overset{\psi^\rK}\longto \rK^0_{\top}(S\times S)^G 
	\longto 0
\]
with integral coefficients is exact.
\end{remark}

\subsection{A computation in the cohomology of the Hilbert square}\label{subsec:theta}
This paragraph contains the technical heart of our computation of the derived monodromy of the Hilbert square of a K3 surface.

Consider the map $\theta^\rH\colon \rH(S,\bQ) \to \rH(X,\bQ)$ given by
\begin{equation}\label{eq:explicit-theta}
	\theta^\rH(s+\lambda + t\pt_S) = \big( s\delta + \lambda \delta + t q_X\delta \big) \cdot e^{-\delta/2},
\end{equation}
for all $s,t \in \bQ$ and $\lambda \in \rH^2(S,\bQ)$. See \S~\ref{subsec:rational-cohomology} for the definition of  $\delta \in \rH^2(X,\bQ)$ and $q_X \in \rH^4(X,\bQ)$.

\begin{proposition}\label{prop:explicit-theta}
The square
\[
\begin{tikzcd}
	\rK^0_\top(S) \arrow{r}{\theta^\rK} \arrow{d}{v_S^\top} & \rK^0_\top(X) \arrow{d}{v_X^\top} \\
	\rH(S,\bQ) \arrow{r}{\theta^\rH} & \rH(X,\bQ)
\end{tikzcd}
\]
commutes.
\end{proposition}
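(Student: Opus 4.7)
Both $\theta^\rK$ and $\theta^\rH\circ v_S^\top$ are $\bZ$-linear, and the topological Chern character identifies $\rK^0_\top(S)\otimes\bQ$ with $\rH^\ev(S,\bQ)$, so it suffices to verify the identity on classes $[L]$ of topological line bundles. Fix such an $L$ with $c_1(L)=\lambda\in \rH^2(S,\bZ)$. Using Proposition~\ref{prop:BKR-inverse}, the projection formula for the finite flat double cover $q$, and the $G$-eigenspace decomposition $q_\ast\cO_Z = \cO_X \oplus \cE$, one obtains $\BKR^{-1}([L\boxtimes L,\,+1]) = L_2$ and $\BKR^{-1}([L\boxtimes L,\,-1]) = L_2 \otimes \cE^{-1}$, so
\[
  \theta^\rK[L] = [L_2] - [L_2 \otimes \cE^{-1}] \quad\text{in } \rK^0_\top(X).
\]
Since $c_1(L_2) = \lambda$ and $c_1(\cE) = \delta$, applying $v_X^\top$ yields $v_X^\top(\theta^\rK[L]) = \sqrt{\Td_X}\cdot e^\lambda \cdot (1 - e^{-\delta})$.

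For the other composition, $\sqrt{\Td_S} = 1 + \pt_S$ and $\lambda^2 = b_S(\lambda,\lambda)\,\pt_S$ give $v_S^\top[L] = 1 + \lambda + \bigl(\tfrac{1}{2}b_S(\lambda,\lambda) + 1\bigr)\pt_S$, and formula~(\ref{eq:explicit-theta}) produces
\[
  \theta^\rH(v_S^\top[L]) = \bigl(\delta + \lambda\delta + (\tfrac{1}{2}b_S(\lambda,\lambda)+1)\,q_X\delta\bigr)\, e^{-\delta/2}.
\]
The proof therefore reduces to the single cohomological identity
\[
  \sqrt{\Td_X}\cdot e^\lambda \cdot (1 - e^{-\delta}) = \bigl(\delta + \lambda\delta + (\tfrac{1}{2}b_S(\lambda,\lambda)+1)\,q_X\delta\bigr)\, e^{-\delta/2}
\]
in $\rH(X,\bQ)$, which must hold for all $\lambda\in \rH^2(S,\bQ)$.

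My plan is to verify this identity by expanding both sides and comparing component by component in $\rH^{2i}(X,\bQ)$ for $i=1,\ldots,4$ (the $\rH^0$-components vanish on both sides, as $1-e^{-\delta}$ begins in degree $2$). The ingredients are Proposition~\ref{prop:Todd}, which gives $\sqrt{\Td_X} = 1 + \tfrac{5}{4}q_X + c\,\pt_X$ for a constant $c$ determined by $q_X^2\in\rH^8(X,\bQ)$, together with the multiplication rules~(\ref{eq:class-q}) and (\ref{eq:q-lambda}), the Fujiki relation~(\ref{eq:fujiki}), and the pairings $b_X(\lambda,\lambda) = b_S(\lambda,\lambda)$, $b_X(\lambda,\delta) = 0$, $b_X(\delta,\delta) = -2$ from \S\ref{subsec:rational-cohomology}. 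These suffice to evaluate $\delta^4 = 12\pt_X$, $\lambda^2\delta^2 = -2b_S(\lambda,\lambda)\pt_X$, $\lambda^4 = 3b_S(\lambda,\lambda)^2\pt_X$, and the relevant mixed products.

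The main obstacle is the bookkeeping in the top degree: the $\rH^8$ comparison requires the value of $q_X^2$, which can be pinned down by writing $q_X$ in an orthogonal basis of $\rH^2(X,\bQ)$ with respect to $b_X$ and applying the multilinear Fujiki relation. The lower-degree comparisons ($\rH^2$, $\rH^4$, $\rH^6$) are direct once $\sqrt{\Td_X}$ is known to have the stated form; only the top degree mixes all the ingredients at once, and the consistency of the Mukai vector with the derived McKay correspondence provides a sanity check that the resulting polynomial identity in $b_S(\lambda,\lambda)$ must be satisfied.
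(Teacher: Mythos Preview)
Your setup is correct and matches the paper's approach: reduction to line bundles, the computation of $\theta^\rK[L]=[L_2](1-[\cE^{-1}])$ via Proposition~\ref{prop:BKR-inverse}, and the reduction to the cohomological identity
\[
  \sqrt{\Td_X}\cdot e^\lambda\cdot(1-e^{-\delta})
  = \bigl(\delta+\lambda\delta+(\tfrac12 b(\lambda,\lambda)+1)q_X\delta\bigr)e^{-\delta/2}
\]
are exactly what the paper does. (One small omission: to invoke Proposition~\ref{prop:BKR-inverse} you need $L$ to be algebraic; the paper handles this by deforming $S$.)

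Where you diverge from the paper is in the verification of this identity. You flag the constant $c$ in $\sqrt{\Td_X}=1+\tfrac54 q_X+c\,\pt_X$ and the value of $q_X^2$ as the ``main obstacle'', but in fact this obstacle is illusory: since $1-e^{-\delta}$ has no $\rH^0$-component, the term $c\,\pt_X$ contributes only in degree $\geq 10$ and hence vanishes. So neither $c$ nor $q_X^2$ is needed. The paper makes exactly this observation and then goes further: using $\lambda^3\delta=0$ and $q_X\lambda\delta=0$ (both consequences of $b(\lambda,\delta)=0$ and relations~(\ref{eq:class-q})--(\ref{eq:q-lambda})), it factors out the $\lambda$-dependence entirely and reduces to the $\lambda$-free identity
\[
  (1+\tfrac14 q_X)(1-e^{-\delta}) = \delta\, e^{-\delta/2},
\]
which is just two scalar identities in $\rH^6$ and $\rH^8$, checked from $b(\delta,\delta)=-2$ and Fujiki. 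Your brute-force degree-by-degree plan would also succeed, but it does more work than necessary; recognising that the top Todd term drops out is the simplification you are missing.
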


\begin{proof}
Since $\rK^0_\top(S) \otimes_\bZ \bQ$ is additively generated by line bundles, it suffices to show 
\begin{equation}\label{eq:technical-heart}
	v_X^\top(\theta^\rK(\cL)) =
	 \Big( \delta + \lambda \delta + (\tfrac{b(\lambda,\lambda)}{2} + 1) q_X \delta \Big)
	  \cdot e^{-\delta/2}
\end{equation}
for a topological line bundle $\cL$ with $\lambda=c_1(\cL)$. Deforming $S$ if necessary, we may assume that $\cL$ is algebraic.

Using Proposition~\ref{prop:BKR-inverse} and the fact that the natural map 
\[
	\cL_2 \otimes q_\ast \cO_Z \to q_\ast p^\ast(\cL \boxtimes \cL)
\]
 is an isomorphism of $\cO_X$-modules we find
\begin{align*}
	\BKR^{-1} [ \cL\boxtimes \cL, 1] &=  \cL_2 \\
	\BKR^{-1} [\cL\boxtimes \cL,-1] &= \cE^{-1} \otimes \cL_2.
\end{align*}
We conclude that $\theta^\rK$ maps  $\cL$ to $[\cL_2]( 1- [\cE^{-1}])$ in $\rK^0(X)$. 

We compute its image under  $v_X$. Using the formula for the Todd class from Proposition~\ref{prop:Todd} we find
\[
	v_X(\theta^\rK(\cL)) = (1 + \tfrac{5}{4} q_X + \cdots ) \exp(\lambda) (1-e^{-\delta}). 
\]
Since $1-e^{-\delta}$ has no term in degree $0$, the degree $8$ part of the square root of the Todd class is irrelevant, so we have
\[
	v_X(\theta^\rK(\cL)) = (1 + \tfrac{5}{4} q_X  ) \exp(\lambda) (1-e^{-\delta}). 
\]
By the Fujiki relation (\ref{eq:fujiki}) from \S~\ref{subsec:rational-cohomology}, we have
$\lambda^3\delta=0$, so the above can be rewritten as
\[
	v_X(\theta^\rK(\cL)) = (1 + \tfrac{5}{4} q_X  )\cdot
	\Big( \delta + \lambda \delta + \tfrac{\lambda^2}{2} \delta\Big) \cdot \frac{1-e^{-\delta}}{\delta}. 
\]
Since we have $q_X\delta\lambda=b(\delta,\lambda)=0$, we can rewrite this further as
\[
	v_X(\theta^\rK(\cL)) = (1 + \tfrac{1}{4} q_X) \cdot
	\Big( \delta + \lambda \delta + (\tfrac{b(\lambda,\lambda)}{2} + 1) q_X \delta \Big)
	\cdot \frac{1-e^{-\delta}}{\delta}.
\]
Comparing this with the right-hand-side of (\ref{eq:technical-heart}) we see that it suffices to show
\[
	(1 + \tfrac{1}{4} q_X) \cdot (1-e^{-\delta}) = \delta e^{-\delta/2}
\]
in $\rH(X,\bQ)$. This boils down to the identities
\[
	\frac{1}{6} \delta^3 + \frac{1}{4} \delta q_X = \frac{1}{8} \delta^3, \quad
	\frac{1}{24} \delta^4 + \frac{1}{8} \delta^2 q_X = \frac{1}{48} \delta^4
\]
in $\rH^6(X,\bQ)$ and $\rH^8(X,\bQ)$ respectively. These  follow easily from the relations (\ref{eq:class-q}), (\ref{eq:q-lambda}), and (\ref{eq:fujiki}) in \S~\ref{subsec:rational-cohomology}.
\end{proof}

\section{Derived monodromy group of the Hilbert square of a K3 surface}

\subsection{Derived monodromy groups}
Let $X$ be a smooth projective complex variety. We call a \emph{deformation} of $X$ the data of a smooth projective variety $X'$, a proper smooth family $\cX \to B$ , a path $\gamma\colon [0,1]\to \cX$ and isomorphisms $X\isomto \cX_{\gamma(0)}$ and $X'\isomto \cX_{\gamma(1)}$. We will informally say that $X'$ as a deformation of $X$, the other data being implicitly understood. Parallel transport along $\gamma$ defines an isomorphism $\rH(X,\bQ) \isomto \rH(X',\bQ)$.

If $X'$ and $X''$ are deformations of $X$, and if $\phi\colon X' \to X''$ is an isomorphism of projective varieties, then we obtain a composite isomorphism
\[
	\rH(X,\bQ) \longisomto \rH(X',\bQ) 
	\overset{\phi}\longisomto \rH(X'',\bQ) \longisomto 
	\rH(X,\bQ).
\]
We call such isomorphism a \emph{monodromy operator} for $X$, and denote by $\Mon(X)$ the subgroup of $\GL(\rH(X,\bQ))$ generated by all monodromy operators. 

If $X'$ and $X''$ are deformations of $X$, and if $\Phi\colon \cD X' \isomto \cD X''$ is an equivalence, then we obtain an isomorphism
\[
	\rH(X,\bQ) \longisomto \rH(X',\bQ) 
	\overset{\Phi^\rH}\longisomto \rH(X'',\bQ) \longisomto 
	\rH(X,\bQ).
\]
We call such isomorphism a \emph{derived monodromy operator} for $X$, and denote by $\DMon(X)$ the subgroup of $\GL(\rH(X,\bQ))$ generated by all derived monodromy operators. 


By construction, the derived monodromy group is deformation invariant. It contains the usual monodromy group, and the image of $\rho_X$, and we have a commutative square of groups
\[
\begin{tikzcd}
\Aut(X) \arrow[hook]{r} \arrow{d} & \Aut(\cD X) \arrow{d}{\rho_X} \\
\Mon(X) \arrow[hook]{r} & \DMon(X). 
\end{tikzcd}
\]

\begin{remark}
The above definition is somewhat ad hoc, and should be considered a poor man's derived monodromy group. This is sufficient for our purposes. A more mature definition should involve all non-commutative deformations of $X$. 
\end{remark}

\begin{proposition}
If $S$ is a K3 surface, then $\DMon(S) = \rO^+(\widetilde\rH(S,\bZ))$.
\end{proposition}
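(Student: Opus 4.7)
The plan is to prove the equality by establishing two inclusions. For the upper bound $\DMon(S) \subseteq \rO^+(\widetilde\rH(S,\bZ))$, the argument should be essentially formal. Parallel transport along a path in a smooth proper family is induced by a diffeomorphism, so it preserves the integral cohomology together with its cup product, hence the integral Mukai lattice $\widetilde\rH(S,\bZ)$ and its Mukai pairing; and because the monodromy of a connected family lies in the identity component of the orthogonal group, it also preserves the orientation of a positive $4$-plane in $\widetilde\rH(S,\bR)$. Every derived equivalence preserves the integral Mukai lattice and Mukai pairing by Mukai \cite{Mukai87}, and preserves the orientation on a $4$-dimensional positive definite subspace by \cite{HMS}. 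Composing generators yields the upper bound.

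For the reverse inclusion, I plan to combine the surjectivity $\rho_S \colon \Aut(\cD S) \surjto \Aut^+(\widetilde\rH(S,\bZ))$ of \cite{HMS} with the surjectivity of the period map for K3 surfaces. Given $g \in \rO^+(\widetilde\rH(S,\bZ))$, the idea is to produce K3 surfaces $S'$ and $S''$ in the deformation class of $S$, together with parallel-transport isometries $\tau' \colon \widetilde\rH(S,\bZ) \isomto \widetilde\rH(S',\bZ)$ and $\tau''\colon \widetilde\rH(S,\bZ)\isomto \widetilde\rH(S'',\bZ)$, such that $\tau'' \circ g \circ (\tau')^{-1}$ is a Hodge isometry from $\widetilde\rH(S',\bZ)$ to $\widetilde\rH(S'',\bZ)$. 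Such a Hodge isometry lifts to a derived equivalence $\cD S' \isomto \cD S''$ by \cite{HMS}, and the resulting derived monodromy operator for $S$ is exactly $g$.

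The main obstacle will be arranging that $\tau'' g (\tau')^{-1}$ respects the Hodge structures, i.e., that $g$ sends the period of $S'$ (a distinguished line $\bC\sigma_{S'}\subset \rH^2(S',\bC)\subset \widetilde\rH(S',\bC)$) to the period of $S''$. If $g$ preserves the canonical decomposition $\widetilde\rH = \bZ\alpha \oplus \rH^2 \oplus \bZ\beta$, this reduces to choosing the Hodge structures on $S'$ and $S''$ freely, which is possible by surjectivity of the K3 period map. In general, $g$ may mix the hyperbolic summand $\bZ\alpha \oplus \bZ\beta$ with $\rH^2$, and the image $g(\sigma_{S'})$ need not lie in the $\rH^2$-summand of any K3 deformation. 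To handle this, I would factor $g$ as a product of simpler elements each of which is individually realizable: monodromies coming from $\rO^+(\rH^2(S,\bZ))$; the shift $[1] \in \Aut(\cD S)$ acting as $-\id$; transvections $B_\lambda$ coming from tensoring with line bundles (Proposition \ref{prop:exp-chern}); and reflections in $(-2)$-classes, which on a deformation where the class becomes algebraic are realized by spherical twists in the corresponding spherical objects. That such elements generate $\rO^+(\widetilde\rH(S,\bZ))$ is a standard fact about the integral orthogonal group of the even unimodular lattice of signature $(4,20)$.
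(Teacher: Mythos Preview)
Your proof is correct and follows essentially the same approach as the paper: the upper bound via \cite{Mukai87} and \cite{HMS}, and the lower bound via generation of $\rO^+(\widetilde\rH(S,\bZ))$ by reflections in $(-2)$-vectors, each realized as a spherical twist on a deformation where the vector becomes algebraic. The paper's version is slightly leaner in that it uses only $(-2)$-reflections as generators, and it cites \cite{Kuleshov90} for the existence of a spherical object with prescribed $(-2)$ Mukai vector --- a point you rely on implicitly and should make explicit.
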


\begin{proof}
Indeed, if $\Phi\colon \cD S_1 \to \cD S_2$ is an equivalence, then 
\[
	\Phi^\rH \colon \widetilde\rH(S_1,\bZ) \to \widetilde\rH(S_2,\bZ)
\]
preservers the Mukai form, as well as a natural orientation on four-dimensional positive subspaces (see \cite[\S~4.5]{HMS}). Also any deformation preserves the Mukai form and the natural orientation, so any derived monodromy operator will land in $\rO^+(\widetilde\rH(S,\bZ))$.

The converse inclusion can be easily obtained from the Torelli theorem, together with the results of \cite{Ploog05,HLOY} on derived auto-equivalences of K3 surfaces. Alternatively one can use that the group $\rO^+(\widetilde\rH(S,\bZ))$ is generated by relfections in $-2$ vectors $\delta$.  By the Torelli theorem, any such $-2$-vector will become algebraic on a suitable deformation $S'$ of $S$, and by \cite{Kuleshov90} there exists a spherical object $\cE$ on $S'$ with Mukai vector $v(\cE)=\delta$. The spherical twist in $\cE$ then shows that reflection in $\delta$ is indeed a derived monodromy operator.
\end{proof}

\subsection{Action of $\DMon(S)$ on $\rH(X,\bQ)$}

By the derived McKay correspondence, any derived equivalence $\Phi_S \colon \cD S_1 \isomto \cD S_2$ between K3 surfaces induces a derived equivalence $\Phi_X \colon \cD X_1 \isomto \cD X_2$ between the corresponding Hilbert squares. By Propositions~\ref{prop:theta-equivariance} and~\ref{prop:topological-K-theory}, the induced map $\Phi_X^H$ only depends on $\Phi_S^H$. Since  any deformation of a K3 surface~$S$ induces a deformation of $X=S^{[2]}$, we conclude that we have a natural homomorphism
\[
	\DMon(S) \longto \DMon(X),
\]
and hence an action of $\DMon(S)$ on $\rH(X,\bQ)$. In this paragraph, we will explicitly compute this action. As a first approximation, we determine the $\DMon(S)$-module structure of $\rH(X,\bQ)$, up to isomorphism.

\begin{proposition}\label{prop:abstract-module-structure}
We have $\rH(X,\bQ) \cong  \widetilde{\rH}(S,\bQ) \oplus \Sym^2 \widetilde{\rH}(S,\bQ)$ as representations
of $\DMon(S)=\rO^+(\widetilde{\rH}(S,\bZ))$.
\end{proposition}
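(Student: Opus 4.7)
My plan is to convert Proposition~\ref{prop:topological-K-theory} into a short exact sequence of $\DMon(S)$-modules and then split it using the Mukai pairing on $\rH(X,\bQ)$.

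First, since $X = S^{[2]}$ has no odd cohomology, the Mukai vector $v_X^\top$ identifies $\rK^0_\top(X)\otimes\bQ$ with $\rH(X,\bQ)$; the Mukai vector $v_S^\top$ identifies $\rK^0_\top(S)\otimes\bQ$ with $\widetilde\rH(S,\bQ)$; and the K\"unneth formula for $S\times S$ identifies $\rK^0_\top(S\times S)^G \otimes\bQ$ with $(\widetilde\rH(S,\bQ) \otimes \widetilde\rH(S,\bQ))^G = \Sym^2\widetilde\rH(S,\bQ)$. Under these identifications, Proposition~\ref{prop:topological-K-theory} becomes the exact sequence
\[
  0 \longto \widetilde\rH(S,\bQ) \overset{\theta^\rH}{\longto} \rH(X,\bQ) \overset{\psi^\rH}{\longto} \Sym^2\widetilde\rH(S,\bQ) \longto 0.
\]
Proposition~\ref{prop:theta-equivariance} gives equivariance for the derived auto-equivalence part of $\DMon(S)$; for the monodromy part, naturality of the Hilbert square, BKR, and the Mukai vector in smooth families of K3 surfaces shows that $\theta^\rH$ and $\psi^\rH$ commute with parallel transport, so the sequence is $\DMon(S)$-equivariant.

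To produce a $\DMon(S)$-equivariant splitting I would exploit the Mukai form $b_X$ on $\rH(X,\bQ)$, which is $\DMon(S)$-invariant since derived equivalences preserve the Mukai form by \cite{Caldararu03} and parallel transport is a topological isometry. Its pullback along $\theta^\rH$ is then a $\DMon(S)$-invariant symmetric bilinear form on $\widetilde\rH(S,\bQ)$. Since $\DMon(S) = \rO^+(\widetilde\rH(S,\bZ))$ is Zariski dense in the real orthogonal group whose standard representation is irreducible, $\widetilde\rH(S,\bQ)$ is an irreducible $\DMon(S)$-module, and Schur's lemma forces this pullback to be a scalar multiple of the Mukai form $b_S$ on $\widetilde\rH(S,\bQ)$. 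A short calculation using Proposition~\ref{prop:explicit-theta} and the relations in \S~\ref{subsec:rational-cohomology}---for instance one finds $b_X(\theta^\rH\alpha_S, \theta^\rH[\pt_S]) = 2$ whereas $b_S(\alpha_S,[\pt_S]) = -1$---shows the scalar is non-zero, so the pullback is non-degenerate. The $b_X$-orthogonal complement of $\theta^\rH(\widetilde\rH(S,\bQ))$ in $\rH(X,\bQ)$ is therefore a $\DMon(S)$-invariant complement, and $\psi^\rH$ restricts on it to a $\DMon(S)$-equivariant isomorphism onto $\Sym^2\widetilde\rH(S,\bQ)$, giving the desired decomposition.

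The main obstacle is the deformation part of the equivariance: Proposition~\ref{prop:theta-equivariance} handles derived auto-equivalences of $S$ cleanly, but for the parallel-transport part of $\DMon(S)$ one must unpack the family-version naturality of every ingredient entering the construction of $\theta^\rH$ and $\psi^\rH$. Once this is in hand, the splitting via the Mukai form is formal modulo the explicit non-vanishing computation.
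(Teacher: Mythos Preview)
Your argument is correct and rests on exactly the two ingredients the paper cites (Propositions~\ref{prop:theta-equivariance} and~\ref{prop:topological-K-theory}); the paper's one-line proof leaves the equivariant splitting of the short exact sequence implicit, and your Mukai-pairing argument is a clean way to supply it. One remark: a faster splitting avoids the explicit non-degeneracy check. The element $-1\in\DMon(S)=\rO^+(\widetilde\rH(S,\bZ))$ is central and, realized by the shift $[1]$ on $\cD S$, induces via Ploog's construction the functor $[2]$ twisted by the sign character on $\cD_G(S\times S)$ (the Koszul sign in the linearization of $\cO_\Delta[1]\boxtimes\cO_\Delta[1]$); hence it acts as $-1$ on $\im\theta^\rK$ and as $+1$ on the quotient $\Sym^2\widetilde\rH(S,\bQ)$, so its $\pm1$-eigenspace decomposition already splits the sequence. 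As for your flagged obstacle, note that the paper handles this just before the proposition: once one knows $\Phi_X^\rH$ depends only on $\Phi_S^\rH$ (from Propositions~\ref{prop:theta-equivariance} and~\ref{prop:topological-K-theory}) and that deformations of $S$ induce deformations of $X=S^{[2]}$, the map $\DMon(S)\to\DMon(X)$ is well defined and the equivariance of $\theta^\rH,\psi^\rH$ for parallel transport follows from the naturality of the BKR construction in families.
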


\begin{proof}
This follows from Propositions~\ref{prop:theta-equivariance} and~\ref{prop:topological-K-theory}.
\end{proof}

Since $\fg(X)$ is a purely topological invariant, it is preserved under deformations. In particular,  Theorem \ref{thm:normalizer} implies that we have an inclusion $\DMon(X) \subset \rO(\widetilde\rH(X,\bQ))$.
We conclude there exists a unique map of algebraic groups
$h$ making the square
\begin{equation}\label{eq:BKR-action}
\begin{tikzcd}
\DMon(S) \arrow{r} \arrow{d} & \DMon(X)\arrow{d} \\
\rO(\widetilde\rH(S,\bQ)) \arrow{r}{h} & \rO(\widetilde\rH(X,\bQ))
\end{tikzcd}
\end{equation}
commute. 

\begin{theorem}\label{thm:BKR-action}
The map $h$ in the square (\ref{eq:BKR-action}) is given by
\[
	 g \mapsto \det(g) \cdot \big(B_{-\delta/2} \circ \iota(g) \circ B_{\delta/2} \big),
\]
with $\iota\colon \rO(\widetilde\rH(S,\bQ)) \to \rO(\widetilde\rH(X,\bQ))$ the natural inclusion. 
\end{theorem}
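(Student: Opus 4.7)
Both sides of the claimed identity are morphisms of algebraic groups $\rO(\widetilde\rH(S,\bQ)) \to \rO(\widetilde\rH(X,\bQ))$: the left-hand side $h$ by the algebraicity asserted above the theorem, the right-hand side by inspection (a composition of the algebraic inclusion $\iota$, conjugation by the fixed element $B_{\delta/2}$, and the determinant character). It therefore suffices to verify the identity on a Zariski-dense subset --- concretely, on generators of the identity component and a single representative of the non-identity component.

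The key tool is the $\DMon(S)$-equivariance of the map $\theta^\rH\colon \widetilde\rH(S,\bQ) \to \rH(X,\bQ)$ provided by Propositions~\ref{prop:theta-equivariance} and~\ref{prop:explicit-theta}. Composing with the embedding $\Psi\colon \SH(X,\bQ) = \rH(X,\bQ) \hookrightarrow \Sym^2 \widetilde\rH(X,\bQ)$ of Proposition~\ref{prop:psi} translates the equivariance $h(g)\cdot\theta^\rH(v) = \theta^\rH(gv)$ into an identity in $\Sym^2 \widetilde\rH(X,\bQ)$, where $\rO(\widetilde\rH(X,\bQ))$ acts through its standard $\Sym^2$-action (up to the determinant twist of Remark~\ref{rmk:explicit-normalizer}). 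One computes $\Psi(\theta^\rH(v))$ in closed form using the formula $\theta^\rH(v) = \pi(v) e^{-\delta/2}$ from Proposition~\ref{prop:explicit-theta}, the values $\Psi(1)=\alpha^d/d!$ and $\Psi(\mu) = \alpha\mu$ for $\mu\in\rH^2(X,\bQ)$, and the cohomology ring relations~(\ref{eq:class-q}), (\ref{eq:q-lambda}), and~(\ref{eq:fujiki}) of \S~\ref{subsec:rational-cohomology}.

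For the unipotent subgroup $\{B_\lambda : \lambda \in \rH^2(S,\bQ)\}$, arising from tensoring by line bundles, the verification is direct: by the description of the derived McKay correspondence, a line bundle $\cL$ with $c_1(\cL) = \lambda$ corresponds to tensoring by $\cL_2$ on $X$, whose action on $\rH(X,\bQ)$ is cup product by $\ch(\cL_2) = e^\lambda$, coinciding with the action of $B_\lambda \in \rO(\widetilde\rH(X,\bQ))$ via Proposition~\ref{prop:exp-chern}. A direct bracket computation using $b(\delta,\lambda)=0$ gives $[e_{\delta/2}, e_\lambda] = 0$, so $B_{\delta/2}$ commutes with $B_\lambda$ and the right-hand side also simplifies to $B_\lambda$ (with $\det(B_\lambda) = 1$). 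A parallel calculation handles the Levi subgroup $\rO(\rH^2(S,\bQ)) \subset \rO(\widetilde\rH(S,\bQ))$ coming from monodromy: $\iota(g)$ commutes with $B_{\delta/2}$ for such $g$, while the $\det(g)$ factor is forced by the determinant twist of Remark~\ref{rmk:explicit-normalizer}. The remaining Lie algebra generators (the grading element $h_S$ and the ``lower'' nilpotents $f_\lambda$) are handled by matching the explicit expression for $\Psi(\theta^\rH(v))$ against the proposed conjugation; the non-trivial identity $[e_{\delta/2}, h_X] = -2 e_{\delta/2}$ is precisely what reproduces the shift by $e^{-\delta/2}$ built into $\theta^\rH$.

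For the non-identity component, one exhibits a single reflection realised by a spherical twist --- for instance, the twist in $\cO_S$ (after deforming to a K3 surface where $\cO_S$ is spherical), which gives the reflection in the $-2$ vector $\alpha_S + \beta_S$ with $\det = -1$ --- and traces it through BKR to check the sign. This last step is the main technical obstacle, since a spherical twist on $\cD S$ becomes a considerably more intricate derived equivalence on $\cD X$ under the derived McKay correspondence. By contrast, the computations on the identity component are essentially mechanical once one commits to explicit generators and makes systematic use of the cohomology ring relations of \S~\ref{subsec:rational-cohomology} together with the explicit formulae for the $B_\mu$-action on $\widetilde\rH(X,\bQ)$ recalled before Proposition~\ref{prop:exp-chern}.
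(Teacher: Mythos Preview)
Your treatment of the identity component $\SO(\widetilde\rH(S,\bQ))$ is essentially the paper's: the paper packages the Lie-algebra check you describe as Lemma~\ref{lemma:explicit-lie-equivariance}, showing directly that $\theta^\rH$ is equivariant for $x\mapsto B_{\delta/2}\iota(x)B_{-\delta/2}$, which together with the $\DMon(S)$-equivariance of $\theta^\rH$ (Propositions~\ref{prop:theta-equivariance} and~\ref{prop:explicit-theta}) pins down $h$ on $\SO$.

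Where you diverge is in determining the sign on the non-identity component, and here your proposal is incomplete. You offer two routes: tracing a spherical twist through BKR, which you correctly flag as ``the main technical obstacle'' and do not carry out; and appealing to monodromy elements in the Levi $\rO(\rH^2(S,\bQ))$, where the phrase ``the $\det(g)$ factor is forced by the determinant twist of Remark~\ref{rmk:explicit-normalizer}'' is an expectation rather than a computation --- you would still have to compute how a monodromy operator with $\det g=-1$ actually acts on $\rH(X,\bQ)$ and then invert the $\det$-twisted embedding of Remark~\ref{rmk:explicit-normalizer} to read off $h(g)$. (Note also that the two routes are redundant: if the Levi argument succeeds, the spherical twist is unnecessary.)

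The paper avoids both computations with a clean group-theoretic trick. Once $h$ is known on $\SO$, normality forces $h(g)$ to preserve the orthogonal splitting $\widetilde\rH(X,\bQ)=B_{\delta/2}\widetilde\rH(S,\bQ)\oplus C$ with $C$ of rank~$1$, so $h$ is determined up to a pair of quadratic characters $(\epsilon_1,\epsilon_2)$ --- four possibilities in all. Three are then excluded by comparing with the already-known abstract $\DMon(S)$-module structure $\rH(X,\bQ)\cong\widetilde\rH(S,\bQ)\oplus\Sym^2\widetilde\rH(S,\bQ)$ (Proposition~\ref{prop:abstract-module-structure}) and the decomposition of Lemma~\ref{lemma:complement}. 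No BKR trace of any individual equivalence beyond what is encoded in $\theta^\rK$ and $\psi^\rK$ is needed.
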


The proof of this theorem will occupy the remainder of this paragraph.

\medskip

Consider the unique homomorphism of  Lie algebras $\iota\colon \fg(S) \to \fg(X)$ that respects the grading and maps $e_\lambda$ to $e_\lambda$ for all $\lambda \in \rH^2(S,\bQ) \subset \rH^2(X,\bQ)$. Under the isomorphism of Theorem~\ref{thm:LLV} 
this corresponds to the map $\fso(\widetilde\rH(S,\bQ)) \to \fso(\widetilde\rH(X,\bQ))$ induced by the inclusion of quadratic spaces $\widetilde\rH(S,\bQ) \subset \widetilde\rH(X,\bQ)$.

Recall from Section~\ref{subsec:theta} the map $\theta^\rH\colon \rH(S,\bQ) \to \rH(X,\bQ)$.

\begin{lemma}\label{lemma:explicit-lie-equivariance}
The map $\theta^\rH \colon \rH(S,\bQ) \to \rH(X,\bQ)$ is equivariant with respect to 
\[
	\theta^\fg \colon \fg(S) \to \fg(X),\, x \mapsto B_{-\delta/2} \circ \iota(x) \circ B_{\delta/2}.
\]
\end{lemma}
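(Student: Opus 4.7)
The plan is to transport the equivariance through the Verbitsky embedding $\Psi\colon \SH(X,\bQ)\to \Sym^2\widetilde\rH(X,\bQ)$ of Proposition~\ref{prop:psi} (with $d=2$), taking advantage of the fact that $\rH(X,\bQ)=\SH(X,\bQ)$ for $X=S^{[2]}$. After this transport, $\theta^\rH$ becomes simply ``multiplication by~$\delta$'', and the equivariance reduces to a one-line Leibniz calculation.

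First I would conjugate away the $e^{-\delta/2}$ factor in~(\ref{eq:explicit-theta}) by setting $\tilde\theta := B_{\delta/2}\circ\theta^\rH$. Since $B_{\delta/2}$ acts on $\rH(X,\bQ)$ as cup product with $e^{\delta/2}$ by Proposition~\ref{prop:exp-chern}, this yields the clean formula
\[
	\tilde\theta(s+\lambda+t\,\pt_S) \;=\; s\,\delta + \lambda\,\delta + t\,q_X\,\delta.
\]
The equivariance of $\theta^\rH$ with respect to $\theta^\fg$ is then equivalent to the equivariance of $\tilde\theta$ with respect to the un-twisted map $\iota$; this is the whole purpose of the conjugation by $B_{\delta/2}$ in the definition of $\theta^\fg$.

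Next, identifying $\rH(S,\bQ)$ with $\widetilde\rH(S,\bQ)$ via $1\leftrightarrow\alpha_S$, $\mu\leftrightarrow \mu$, $\pt_S\leftrightarrow \beta_S$ (an isomorphism of $\fg(S)$-modules), I would compute $\Psi\circ\tilde\theta$ on each piece. A direct Leibniz calculation using $\Psi(1)=\alpha_X^2/2$ and $b(\mu,\delta)=0$ for $\mu\in\rH^2(S,\bQ)$ gives $\Psi(\delta)=\alpha_X\delta$ and $\Psi(\mu\delta)=\mu\delta$. The only nontrivial identification is $\Psi(q_X\delta)=\beta_X\delta$; this follows by first pinning down $\Psi(q_X)$ as the unique $\fso(\rH^2(X,\bQ))$-invariant, degree-zero element of $\ker\Delta\subset\Sym^2\widetilde\rH(X,\bQ)$ of the correct normalisation (using the Fujiki relation and $b_2(X)=23$), which forces $\Psi(q_X)=\tfrac{1}{25}\bigl(\sum_i \lambda_i\lambda^i + 23\,\alpha_X\beta_X\bigr)$ for any $b_X$-dual bases $\{\lambda_i\},\{\lambda^i\}$ of $\rH^2(X,\bQ)$, and then applying $e_\delta$ and using $\sum_i b(\delta,\lambda_i)\lambda^i=\delta$. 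Conclusion: $\Psi\circ\tilde\theta$ is the multiplication-by-$\delta$ map $\widetilde\rH(S,\bQ)\hookrightarrow \widetilde\rH(X,\bQ) \xrightarrow{\,\cdot\,\delta\,} \Sym^2\widetilde\rH(X,\bQ)$.

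Finally, the $\iota$-equivariance of this multiplication map is immediate from Leibniz: for $x\in \fso(\widetilde\rH(S,\bQ))$ and $y\in \widetilde\rH(S,\bQ)$,
\[
	\iota(x)(y\cdot\delta) \;=\; \iota(x)(y)\cdot\delta + y\cdot\iota(x)(\delta) \;=\; x(y)\cdot\delta,
\]
since $\iota(x)$ extends $x$ by zero on the orthogonal complement $\bQ\delta$ of $\widetilde\rH(S,\bQ)$ in $\widetilde\rH(X,\bQ)$. The main obstacle in the whole argument is pinning down $\Psi(q_X\delta)$: this is where the specific geometry of $X=S^{[2]}$ (the Fujiki constant and $b_2=23$) enters, and it conspires with the $e^{-\delta/2}$ in~(\ref{eq:explicit-theta}) to produce the strikingly clean multiplication-by-$\delta$ picture.
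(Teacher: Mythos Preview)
Your proof is correct and takes a genuinely different route from the paper's.

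Both arguments begin identically: strip off the $e^{-\delta/2}$ factor and reduce to showing that $\tilde\theta=\theta^\rH_0$ is $\iota$-equivariant. From there the paper proceeds by a direct cohomological check: it verifies that $\theta^\rH_0$ is graded and that for each $\mu\in\rH^2(S,\bQ)$ the square with $e_\mu$ commutes, using the relations (\ref{eq:class-q}) and (\ref{eq:q-lambda}); equivariance for the $f_\mu$'s then follows from the standard $\fsl_2$ fact that a graded map commuting with $e$ automatically commutes with the unique $f$ completing the triple. Your approach instead pushes everything through the Verbitsky embedding $\Psi$ into $\Sym^2\widetilde\rH(X,\bQ)$ and identifies $\Psi\circ\tilde\theta$ with the multiplication-by-$\delta$ map $\widetilde\rH(S,\bQ)\to\Sym^2\widetilde\rH(X,\bQ)$; the Leibniz rule then gives $\iota$-equivariance for \emph{all} of $\fg(S)=\fso(\widetilde\rH(S,\bQ))$ in one line, with no appeal to $\fsl_2$ theory. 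The trade-off is that you must first pin down $\Psi(q_X)=\tfrac{1}{25}\bigl(\sum_i\lambda_i\lambda^i+23\,\alpha\beta\bigr)$, which brings in the Fujiki constant and $b_2=23$ explicitly (the paper's identities (\ref{eq:class-q}) and (\ref{eq:q-lambda}) encode the same information but stay inside $\rH^\bullet(X,\bQ)$). Your route buys a cleaner structural statement---$\theta^\rH_0$ \emph{is} multiplication by $\delta$ in the Mukai picture---at the cost of a slightly longer setup; the paper's is shorter but less illuminating.
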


\begin{proof}
We have $\theta^\rH = e^{-\delta/2} \cdot \theta^\rH_0$, with
\[
	\theta^\rH_0(s+\lambda + t\pt_S) =  s\delta + \lambda \delta + t q_X\delta.
\]
The map $\theta^\rH_0$ respects the grading, and we claim that for every $\mu \in \rH^2(S,\bQ)$ the diagram
\[
 \begin{tikzcd}
 \rH(S,\bQ) \arrow{r}{\theta^\rH_0} \arrow{d}{e_\mu} 
 	& \rH(X,\bQ) \arrow{d}{e_\mu} \arrow{r}{e^{-\delta/2}} 
	& \rH(X,\bQ) \arrow{d}{e^{-\delta/2} e_\mu e^{\delta/2}} \\
 \rH(S,\bQ) \arrow{r}{\theta^\rH_0} 
 	& \rH(X,\bQ) \arrow{r}{e^{-\delta/2}}
	& \rH(X,\bQ)
 \end{tikzcd}
 \]
 commutes. Indeed, we have
 \begin{align*}
 	e_\mu(\theta^\rH_0(s+\lambda + t\pt_S)) &= s \delta \mu + \lambda \delta  \mu + t q_X  \delta  \mu, \\
	\theta^\rH_0(e_\mu(s+\lambda + t\pt_S)) &= s \delta  \mu + b(\lambda,\mu) q_X  \delta.
\end{align*}
One verifies easily that these agree, using identities (\ref{eq:q-lambda}) and (\ref{eq:class-q}) from \S~\ref{subsec:rational-cohomology}, and the fact that $b(\lambda,\delta)=b(\mu,\delta)=0$. This shows that the left-hand square commutes. The right-hand square commutes trivially, so the outer rectangle commutes, which shows that $\theta^\rH = e^{-\delta/2} \cdot \theta^\rH_0$ is indeed equivariant with respect to $\theta^\fg$.
\end{proof}

\begin{lemma}\label{lemma:complement}
There is an isomorphism 
\[
	\det (\widetilde\rH(X,\bQ)) \otimes \Sym^2 (\widetilde\rH(X,\bQ)) \cong \rH(X,\bQ) \oplus \det(\widetilde\rH(X,\bQ))
\]
of representations of $G=\rO(\widetilde\rH(X,\bQ))$.
\end{lemma}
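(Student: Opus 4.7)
The plan is to unwind both sides as representations of $G = \rO(V)$, where $V = \widetilde\rH(X,\bQ)$, and reduce the statement to the tautological short exact sequence $0 \to S_{[2]}V \to \Sym^2 V \to \bQ \to 0$ cut out by the bilinear form $b$.

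First I would identify $\rH(X,\bQ)$ explicitly as a $G$-module. Since $X$ is of type $\mathrm{K3}^{[2]}$ we have $\rH(X,\bQ) = \SH(X,\bQ)$, which by Proposition~\ref{prop:psi} is $G$-equivariantly isomorphic to $S_{[2]}(V)$ up to a twist. Crucially, since $d = 2$ is even and $b_2 = 23$ is odd, Remark~\ref{rmk:explicit-normalizer} records that the implicit embedding $G \hookrightarrow \GL(\rH(X,\bQ))$ from Theorem~\ref{thm:normalizer} is the \emph{determinant twist} of the natural map $G \to \GL(S_{[2]}V)$. Thus
\[
	\rH(X,\bQ) \cong \det(V) \otimes S_{[2]}(V)
\]
as $G$-representations.

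Next I would use the form $b$ to produce a canonical $G$-equivariant surjection $\Sym^2 V \surjto \bQ$ whose kernel, by the very definition of $S_{[2]}$, is $S_{[2]}(V)$. Since we work in characteristic zero and $G$ is reductive, the short exact sequence
\[
	0 \to S_{[2]}(V) \to \Sym^2 V \to \bQ \to 0
\]
splits, yielding $\Sym^2 V \cong S_{[2]}(V) \oplus \bQ$ as $G$-modules. (A concrete $G$-equivariant section is given by the inverse-form element $\check b \in \Sym^2 V$ corresponding to $b^{-1}$ under $V \cong V^*$.)

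Finally, tensoring the decomposition by $\det V$ and substituting the first identification produces
\[
	\det V \otimes \Sym^2 V \cong \bigl(\det V \otimes S_{[2]}(V)\bigr) \oplus \det V \cong \rH(X,\bQ) \oplus \det V,
\]
as required. The only conceptually non-routine step is keeping track of the determinant twist in the identification of $\rH(X,\bQ)$ with $\det V \otimes S_{[2]} V$; once that is granted via Remark~\ref{rmk:explicit-normalizer}, the remainder is the standard splitting of the symmetric square by the invariant form $b$.
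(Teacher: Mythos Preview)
Your argument is correct and is precisely the one the paper has in mind: the paper's proof is the one-line citation ``This follows from Lemma~\ref{lemma:exact-sequence}, Theorem~\ref{thm:normalizer} and Remark~\ref{rmk:explicit-normalizer},'' and you have simply unpacked those three ingredients (the exact sequence $0\to S_{[2]}V\to\Sym^2 V\to\bQ\to 0$, its splitting, and the determinant-twisted identification $\rH(X,\bQ)\cong\det V\otimes S_{[2]}V$) exactly as intended. One minor bibliographic quibble: the isomorphism $\SH(X,\bQ)\cong S_{[2]}V$ is really Lemma~\ref{lemma:exact-sequence} rather than Proposition~\ref{prop:psi} alone, since the latter only gives the map while the former identifies its image with $\ker\Delta$.
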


\begin{proof}
This follows from  Lemma~\ref{lemma:exact-sequence}, Theorem~\ref{thm:normalizer} and Remark~\ref{rmk:explicit-normalizer}. 
\end{proof}

We are now ready to prove the main result of this paragraph.

\begin{proof}[Proof of Theorem~\ref{thm:BKR-action}]
By Proposition~\ref{prop:explicit-theta} the map $\theta^\rH$ is equivariant for the action of $\DMon(S)$. Lemma~\ref{lemma:explicit-lie-equivariance} then implies that 
\[
	h(g) =  B_{-\delta/2} \circ \iota(g) \circ B_{\delta/2} 
\]
for all $g\in \SO(\widetilde\rH(S,\bQ))$. We have an orthogonal decomposition
\[
	\widetilde\rH(X,\bQ) = B_{-\delta/2} (\widetilde\rH(S,\bQ)) \oplus  C
\]
with $C$ of rank $1$. Since $\SO(\widetilde\rH(S,\bQ))$ is normal in $\rO(\widetilde\rH(S,\bQ))$, the action of $\rO(\widetilde\rH(S,\bQ))$ (via $h$) must preserve this decomposition. With respect to this decomposition $h$ must then be given by
\[
	h(g) = \big(B_{-\delta/2} \circ g \epsilon_1(g) \circ B_{\delta/2} \big)  \oplus \epsilon_2(g)
\] 
where the $\epsilon_i(g) \colon \rO(\widetilde\rH(S,\bQ)) \to \{\pm 1\}$ are quadratic characters. This leaves four possibilities for $h$. One verifies that $\epsilon_1=\epsilon_2=\det g$ is the only possibility compatible with Proposition~\ref{prop:abstract-module-structure} and Lemma~\ref{lemma:complement},  and the theorem follows.
\end{proof}

\subsection{A transitivity lemma}

In this section we prove a lattice-theoretical lemma that will play an important role in the proofs of Theorem~\ref{bigthm:derived-monodromy} and Theorem~\ref{bigthm:lower-bound}.


Let $b\colon L\times L \to \bZ$ be an even non-degenerate lattice.
Let $U$ be a hyperbolic plane with basis consisting of isotropic vectors $\alpha$, $\beta$ satisfying $b(\alpha, \beta) = -1$. 

As before, to a $\lambda \in L$ we associate the isometry $B_\lambda \in \rO(U\oplus L)$  defined as
\[
	B_\lambda( r\alpha + \mu + s\beta) = 
	r\alpha + (\mu + r \lambda) + \big(s + b(\mu,\lambda) + r \tfrac{b(\lambda,\lambda)}{2}\big)\beta
\]
for all $r,s\in \bZ$ and $\mu \in L$.
Let $\gamma$ be the isometry of $U\oplus L$ given by $\gamma(\alpha)=\beta$, $\gamma(\beta)=\alpha$, and $\gamma(\lambda)=-\lambda$ for all $\lambda\in L$. 

\begin{lemma}\label{lemma:transitivity}
Let $L$ be an even lattice containing a hyperbolic plane. Let $G \subset \rO(U\oplus L)$ be the subgroup generated by $\gamma$ and by the $B_\lambda$ for all $\lambda \in L$. Then for all $\delta \in U \oplus L$ with $\delta^2=-2$ and for all $g \in \rO(U\oplus L)$ there exists a $g'\in G$ such that $g'g$ fixes $\delta$.
\end{lemma}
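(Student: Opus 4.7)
The lemma is equivalent to asserting that for every $\delta \in M := U\oplus L$, the $G$-orbit of $\delta$ coincides with its $\rO(M)$-orbit. Since $L$ contains a hyperbolic plane $U'$, the lattice $M$ contains $U\oplus U$, which places us in the classical setting of Eichler's theorem on orbits of vectors in indefinite lattices.

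The first step is to identify $B_\lambda$ with the Eichler transvection associated to the isotropic vector $\beta$ and the vector $\lambda \in \beta^\perp$, so that conjugation by $\gamma$ yields (up to sign) the analogous transvection based on $\alpha$. Further conjugation within $G$---using in particular $B_e, B_f$ for the isotropic generators $e, f$ of $U' \subset L$ and their $\gamma$-conjugates---should produce Eichler transvections based on arbitrary primitive isotropic vectors of $M$. This would exhibit $G$ as containing the stable orthogonal group $\widetilde\rO(M)$ generated by all Eichler transvections. Eichler's theorem then states that $\widetilde\rO(M)$ acts transitively on primitive vectors of given norm and given image in the discriminant group $M^*/M = L^*/L$; the element $\gamma$ acts as $-1$ on $L^*/L$ and, combined with elements of $\widetilde\rO(M)$, provides enough flexibility to merge the $\rO(M)$-equivalent discriminant classes. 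The non-primitive case reduces to the primitive one by writing $\delta = n\delta_0$.

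The main obstacle is showing $G \supseteq \widetilde\rO(M)$, since producing Eichler transvections based on arbitrary primitive isotropic vectors via iterated conjugation requires a careful combinatorial argument. An alternative, more concrete strategy---likely preferable for the paper---bypasses Eichler's theorem and instead reduces any $\delta = r\alpha + \mu + s\beta$ to a canonical form using the generators directly. A computation shows that $B_\lambda$ keeps $r$ fixed and sends $\mu \mapsto \mu + r\lambda$, while $\gamma B_\lambda \gamma^{-1}$ keeps $s$ fixed and sends $\mu \mapsto \mu - s\lambda$, so $\mu$ can be modified by any element of $rL + sL = \gcd(r,s)\cdot L$. Composing these with the swap $\gamma$ and exploiting the hyperbolic plane $U' \subset L$ to perform Euclidean-algorithm-style reductions on the pair $(r,s)$, one can bring any $\delta$ into a canonical form depending only on its norm $b(\delta,\delta)$ and its divisor $\gcd(r, s, b(\mu, L))$---both $\rO(M)$-invariants. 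Applying this reduction to both $\delta$ and $g\delta$ and chaining the resulting elements of $G$ yields the desired $g' \in G$ with $g'g(\delta) = \delta$.
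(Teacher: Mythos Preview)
Your outline is correct in spirit and the second, hands-on approach would work, but you are making the first approach harder than it needs to be. You do not need to show that $G$ contains the full stable orthogonal group $\widetilde\rO(M)$ generated by \emph{all} Eichler transvections; it suffices to show that $G$ contains the subgroup $E_U(L)$ generated by the transvections based on the fixed hyperbolic plane $U$ alone. This is immediate: your identification $B_\lambda = t(\beta,-\lambda)$ is right, and one checks directly that $\gamma B_\lambda \gamma^{-1} = t(\alpha,\lambda)$, so $E_U(L)\subset G$ with no further conjugation gymnastics needed.

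The paper then simply cites \cite[Prop.~3.3]{GHS}, which already gives exactly the transitivity statement required: for $L$ containing a hyperbolic plane, the group $E_U(L)$ acts transitively on primitive vectors of $U\oplus L$ with given norm and divisor. Since norm and divisor are $\rO(U\oplus L)$-invariants, this immediately yields $g'\in E_U(L)\subset G$ with $g'(g\delta)=\delta$. Your Euclidean-algorithm reduction in the second paragraph is essentially a sketch of the proof of that cited proposition, so it is a valid alternative---but if you are willing to invoke Eichler-type results at all, the clean route is to stop at $E_U(L)\subset G$ and cite. The discussion of $\gamma$ acting on the discriminant group and ``merging $\rO(M)$-equivalent discriminant classes'' is then unnecessary.
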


\begin{proof}
This follows from classical results of Eichler. A convenient modern source is \cite[\S 3]{GHS}, whose notation we adopt. The isometry $B_\lambda$ coincides with the Eichler transvection $t(\beta,-\lambda)$. The conjugate $\gamma B_\lambda \gamma^{-1}$ is the Eicher transvection $t(\alpha,\lambda)$. Hence $G$ contains the subgroup $E_U(L) \subset \rO(U\oplus L)$ of unimodular transvections with respect to $U$. By \cite[Prop.~3.3]{GHS}, there exists a $g'\in E_U(L)$ mapping $g\delta$ to~$\delta$.
\end{proof}

\subsection{Proof of Theorem~\ref{bigthm:derived-monodromy}} \label{subsec:lower-bound}
Let $X$ be a hyperk\"ahler variety of type $\mathrm{K3}^{[2]}$. Let $\delta \in \rH^2(X,\bZ)$ be any class satisfying $\delta^2=-2$ and $b(\delta, \lambda) \in 2\bZ$ for all $\lambda \in \rH^2(X,\bZ)$. For example, if $X=S^{[2]}$, we may take $\delta=c_1(\cE)$ as in \S~\ref{subsec:rational-cohomology}. 
Consider the integral lattice
\[
	\Lambda := B_{\delta/2} \big( \bZ\alpha \oplus \rH^2(X,\bZ) \oplus \bZ\beta \big)
	\subset \widetilde\rH(X,\bQ).
\]
The sub-group $\Lambda \subset \widetilde\rH(X,\bQ)$ does not depend on the choice of $\delta$. In this section, we will prove Theorem~\ref{bigthm:derived-monodromy}. More precisely, we will show:

\begin{theorem}\label{thm:derived-monodromy}
$\rO^+(\Lambda) \subset \DMon(X) \subset \rO(\Lambda)$.
\end{theorem}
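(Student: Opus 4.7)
The plan is to reduce both inclusions to the case $X = S^{[2]}$ for some K3 surface $S$ by deformation invariance: the group $\DMon(X)$ is deformation invariant by definition, while $\Lambda$ depends only on the $\Mon(X)$-invariant data of $\rH^2(X,\bZ)$ (the choice of admissible $\delta$ being immaterial). Throughout, I will use the identity $B_\mu B_\nu = B_{\mu+\nu}$, so in particular $B_{\delta/2} = B_{-\delta/2} \cdot B_\delta$.

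For the upper bound $\DMon(X) \subseteq \rO(\Lambda)$, I would verify preservation of $\Lambda$ on the natural generators of $\DMon(X)$. Ordinary monodromy fixes $\alpha, \beta$ and preserves $\rH^2(X,\bZ)$ as well as the set of admissible $\delta$'s, so it preserves $\Lambda$. Line bundle twists $B_\lambda$ for $\lambda \in \rH^2(X,\bZ)$ commute with $B_{-\delta/2}$, so they preserve $\Lambda = B_{-\delta/2}(M)$ where $M := \bZ\alpha \oplus \rH^2(X,\bZ) \oplus \bZ\beta$. For BKR-induced equivalences, the explicit formula $h(g) = \det(g) \cdot B_{\delta/2} \iota(g) B_{-\delta/2}$ from Theorem~\ref{thm:BKR-action} preserves $\Lambda$: indeed $\iota(g)$ preserves $M$ (since $g$ preserves $\widetilde\rH(S,\bZ) \subset \widetilde\rH(X,\bZ)$ and $\iota(g)$ fixes $\delta$), and the identity $B_\delta(M) = M$ yields $B_{\delta/2}(M) = B_{-\delta/2}(M) = \Lambda$. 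To cover more exotic derived equivalences of $X$, I would invoke the functoriality of the topological Mukai vector $v_X^\top\colon \rK^0_\top(X) \to \rH(X,\bQ)$ from Section~5: the integral lattice $\im(v_X^\top)$ is $\DMon(X)$-invariant, and via the $\DMon(X)$-equivariant embedding $\Psi\colon \rH(X,\bQ) \hookrightarrow \Sym^2 \widetilde\rH(X,\bQ)$, this integral K-theoretic structure pins down $\Lambda$ intrinsically in $\widetilde\rH(X,\bQ)$.

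For the lower bound $\rO^+(\Lambda) \subseteq \DMon(X)$, the plan is to apply the transitivity Lemma~\ref{lemma:transitivity} to the abstract identification $\Lambda \cong U \oplus \rH^2(X,\bZ)$. Under this identification, it suffices to exhibit in $\DMon(X)$: (i) the Eichler transvections $B_\lambda$ for $\lambda \in \rH^2(X,\bZ)$, furnished by tensoring with a line bundle after deforming $X$ so that $\lambda$ becomes algebraic; (ii) the swap element $\gamma$ of Lemma~\ref{lemma:transitivity}, obtainable from the shift functor combined with a duality or BKR-transported analogue from $\cD S$; and (iii) the stabilizer in $\rO^+(\Lambda)$ of the distinguished isotropic vector $\beta \in \Lambda$, which is an extension of $\rO^+(\rH^2(X,\bZ))$ by the Eichler transvections, realized by combining the line bundle twists of (i) with the subgroup of the BKR image $h(\rO^+(\widetilde\rH(S,\bZ))) \subset \DMon(X)$ stabilizing $\beta$, together with ordinary monodromy acting on $\rH^2(X,\bZ)$. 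The transitivity lemma then writes any $g \in \rO^+(\Lambda)$ as a product of an element of (i)-(ii) and a stabilizer element from (iii).

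The main obstacle is realizing the element $\gamma$ cleanly inside $\DMon(X)$ and, more broadly, verifying that the combined image of line bundle twists, BKR, and ordinary monodromy exhausts the stabilizer of $\beta$ in $\rO^+(\Lambda)$ rather than a proper subgroup. A secondary difficulty is carefully tracking the orientation conventions distinguishing $\rO^+(\Lambda)$ from $\rO(\Lambda)$, in particular handling the sign twist $\det(g)$ appearing in the BKR formula of Theorem~\ref{thm:BKR-action}.
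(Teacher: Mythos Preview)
Your lower-bound strategy is in the right spirit but you have chosen the wrong vector to stabilise, and your upper-bound strategy has a real gap.

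\textbf{Upper bound.} Your plan to ``verify preservation of $\Lambda$ on the natural generators of $\DMon(X)$'' cannot work as stated, because no generating set for $\DMon(X)$ is known a priori; the only universal constraint is that $\DMon(X)$ preserves the lattice $\rK_\top(X)\subset\rH(X,\bQ)$. You recognise this and try to patch it by asserting that the integral $K$-theory lattice ``pins down $\Lambda$ intrinsically'', but that is precisely the statement to be proved, and you give no argument for it. The paper circumvents this entirely by a group-theoretic trick: it proves the \emph{lower} bound first, and then observes that $\DMon(X)$ is an arithmetic subgroup of $\rO(\widetilde\rH(X,\bQ))$ (since it preserves $\rK_\top(X)$) containing $\SO^+(\Lambda)\times\{\pm1\}$. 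A separate proposition shows that $\SO(\Lambda)$ is the \emph{unique} maximal arithmetic subgroup of $\SO(\Lambda\otimes\bQ)$ containing $\SO^+(\Lambda)$ (using semi-regularity of the quadratic form and smoothness of $\bSpin(\Lambda)$, $\bSO(\Lambda)$ over $\Spec\bZ$). This forces $\DMon(X)\subset\rO(\Lambda)$ with no need to analyse individual equivalences.

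\textbf{Lower bound.} Your use of Lemma~\ref{lemma:transitivity} is correct, but stabilising the isotropic vector $\beta$ makes the residual step unnecessarily hard: the stabiliser of $\beta$ in $\rO^+(\Lambda)$ surjects onto $\rO^+(\rH^2(X,\bZ))$, and you would then need Markman's computation of $\Mon(X)$ as an extra input to realise this inside $\DMon(X)$. The paper instead applies the transitivity lemma to the $(-2)$-vector $B_{-\delta/2}\delta\in\Lambda$. Its orthogonal complement in $\Lambda$ is exactly $B_{-\delta/2}\big(\bZ\alpha\oplus\rH^2(S,\bZ)\oplus\bZ\beta\big)=B_{-\delta/2}\widetilde\rH(S,\bZ)$, and the residual element $g'g$ then lies in $\SO^+$ of this lattice. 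By Theorem~\ref{thm:BKR-action} this is precisely the image of $\DMon(S)=\rO^+(\widetilde\rH(S,\bZ))$ under $h$ (the $\det$ twist accounts for the passage from $\SO^+$ to $\rO^+$). So the stabiliser step reduces cleanly to the known K3 case, with no appeal to $\Mon(X)$. Your swap element $\gamma$ is realised concretely in the paper as $h(\gamma_S)$ for the element $\gamma_S\in\rO^+(\widetilde\rH(S,\bZ))$ with $\gamma_S(\alpha)=-\beta$, $\gamma_S(\beta)=-\alpha$, $\gamma_S|_{\rH^2(S)}=\id$; since $\det\gamma_S=-1$, the formula for $h$ gives exactly the required involution on $\Lambda$.
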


We start with the lower bound.

\begin{proposition}\label{prop:lower-bound-dmon}We have
$\rO^+(\Lambda) \subset \DMon(X)$ as subgroups of $\rO(\widetilde\rH(X,\bQ))$.
\end{proposition}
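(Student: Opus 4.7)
The plan combines three ingredients: (i) the deformation invariance of $\DMon(X)$, (ii) Theorem~\ref{thm:BKR-action}, and (iii) the transitivity result of Lemma~\ref{lemma:transitivity}.

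By deformation invariance I may assume $X = S^{[2]}$ with distinguished $-2$-class $\delta = c_1(\cE)$. Theorem~\ref{thm:BKR-action} then supplies an injection
\[
h\colon \rO^+(\widetilde\rH(S,\bZ)) \hookrightarrow \DMon(X),\qquad g\mapsto \det(g)\,B_{\delta/2}\,\iota(g)\,B_{-\delta/2},
\]
whose image lies in $\rO(\Lambda)$. A short computation using $\iota(g)\delta = \delta$ and $B_{\pm\delta/2}(\delta\mp\beta)=\delta$ shows that $h(g)(\delta-\beta) = \det(g)\,(\delta-\beta)$, so the image of $h$ is precisely the (signed) stabilizer of the line $\bQ(\delta-\beta) \subset \Lambda$ inside $\rO^+(\Lambda)$.

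Next I vary the deformation. Every $-2$-class $\delta'\in\rH^2(X,\bZ)$ with $b(\delta',\cdot)\in 2\bZ$ arises as the distinguished class of some deformation $X\rightsquigarrow(S')^{[2]}$, and the lattice $\Lambda$ does not depend on the choice of $\delta'$. Each such deformation yields another embedding $h_{\delta'}\colon \rO^+(\widetilde\rH(S',\bZ))\hookrightarrow \DMon(X)\cap \rO(\Lambda)$, and in particular $h_{\delta'}(B_\mu)$ is a concrete Eichler transvection of $\Lambda$ for each $\mu\in(\delta')^\perp\subset\rH^2(X,\bZ)$. As $(\delta',\mu)$ varies, these transvections exhaust $\{B_\mu : \mu\in\rH^2(X,\bZ)\}$; an analogous construction with the swap $\gamma_{S'}\in \rO(\widetilde\rH(S',\bZ))$ produces the corresponding swap $\gamma$ on a hyperbolic plane inside $\Lambda$.

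Finally I apply Lemma~\ref{lemma:transitivity} with $L=\rH^2(X,\bZ)$ and $U=\bZ\alpha\oplus\bZ\beta$ (so $U\oplus L = B_{\delta/2}\Lambda$). Any $g\in\rO^+(\Lambda)$ is, modulo the subgroup $G$ generated by the transvections $B_\mu$ and the swap $\gamma$ produced in the previous step, an isometry fixing the chosen vector (taken to be the image of $\delta-\beta$). Such a residual isometry is exactly the type of element already contained in $h(\rO^+(\widetilde\rH(S,\bZ)))$ from the single-$\delta$ construction. Consequently $\rO^+(\Lambda)\subset \DMon(X)$.

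I expect the main obstacle will be bookkeeping two independent sign conditions at once: the $\det$-twist in the formula for $h$ in Theorem~\ref{thm:BKR-action}, and the orientation cut $\rO^+\subset \rO$ on $\Lambda$. One must verify that the subgroup generated above is \emph{exactly} $\rO^+(\Lambda)$, rather than all of $\rO(\Lambda)$ or a proper subgroup of index two; this amounts to comparing the sign of $\det$ on $\rO(\widetilde\rH(S,\bZ))$ with the orientation of positive $4$-planes in $\widetilde\rH(X,\bR)$, and checking that the many $h_{\delta'}$ together cover the same index-two subgroup consistently.
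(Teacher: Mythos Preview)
Your overall strategy is correct and matches the paper's: reduce via Lemma~\ref{lemma:transitivity} to the stabilizer of a specific $(-2)$-vector in $\Lambda$, which is then covered by the image of $\DMon(S)$ under the map $h$ of Theorem~\ref{thm:BKR-action}.

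However, your middle step is needlessly complicated, and has loose ends. You produce the transvections $B_\mu$ for $\mu \in \rH^2(X,\bZ)$ by varying the Hilbert-square structure (i.e.\ varying $\delta'$) and extracting them from the various $h_{\delta'}$. This requires (a) justifying that every admissible $\delta'$ arises from a deformation to some $(S')^{[2]}$, (b) carefully tracking parallel transport between these deformations, and (c) checking that $\bigcup_{\delta'} (\delta')^\perp$ generates $\rH^2(X,\bZ)$ as a group --- note that $\delta$ itself lies in no $(\delta')^\perp$, so the union alone does not suffice and you must pass to the group generated.

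The paper bypasses all of this. For any $\lambda \in \rH^2(X,\bZ)$ one may deform $X$ within the $\mathrm{K3}^{[2]}$ deformation type (not necessarily to a Hilbert square) so that $\lambda$ becomes algebraic, and then tensor with the corresponding line bundle; by Proposition~\ref{prop:exp-chern} this auto-equivalence acts as $B_\lambda$ on cohomology. Thus $B_\lambda \in \DMon(X)$ immediately for every $\lambda$, and only a \emph{single} Hilbert-square deformation is needed, namely to produce the swap $h(\gamma)$ from one element $\gamma \in \DMon(S)$. The paper also handles $-1$ separately via the shift functor $[1]$, reducing the problem to $\SO^+(\Lambda)$ before invoking transitivity; this cleanly disposes of one of the two sign ambiguities you worry about at the end.

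A minor computational slip: the fixed vector should be $B_{-\delta/2}\delta = \delta + \beta$, not $\delta - \beta$. Indeed $B_{-\delta/2}(\delta) = \delta + b(\delta,-\delta/2)\beta = \delta + \beta$, and your claimed identities $B_{\pm\delta/2}(\delta\mp\beta)=\delta$ are off by a sign.
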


\begin{proof}
Since the derived monodromy group is invariant under deformation, we may assume without loss of generality
that $X=S^{[2]}$ for a K3 surface $S$ and $\delta=c_1(\cE)$ as in \S~\ref{subsec:rational-cohomology}.

The shift functor $[1]$ on $\cD X$ acts as $-1$ on $\rH(X,\bQ)$, which coincides with the action of $-1 \in \rO(\widetilde\rH(X,\bQ))$. In particular, $-1\in \rO^+(\Lambda)$ lies in $\DMon(X)$, so it suffices to show that $\SO^+(\Lambda)$ is contained in $\DMon(X)$.

Consider the isometry $\gamma\in \rO^+(\widetilde\rH(S,\bQ))$ given by $\gamma(\alpha)=-\beta$, $\gamma(\beta)=-\alpha$ and $\gamma(\lambda)=\lambda$ for all $\lambda \in \rH^2(S,\bQ)$. Then $\det(\gamma)=-1$ and by Theorem~\ref{thm:BKR-action} its image $h(\gamma)$ interchanges $B_{\delta/2}\alpha$ and $B_{\delta/2}\beta$ and acts by $-1$ on
$B_{\delta/2} \rH^2(X,\bZ)$. Since $\gamma$ lies in $\DMon(S) \subset \rO(\widetilde\rH(S,\bQ))$, we have that $h(\gamma)$
 lies in $\DMon(X) \subset \rO(\widetilde\rH(X,\bQ))$.
 
  Let $G\subset \rO(\widetilde\rH(X,\bQ))$ be the subgroup generated by $h(\gamma)$ and the isometries $B_\lambda$ for $\lambda \in \rH^2(X,\bZ)$. Clearly $G$ is contained in $\DMon(X)$. 
 
Let $g$ be an element of $\SO^+(\Lambda)$, and consider the image $gB_{\delta/2} \delta$ of $B_{\delta/2} \delta$. By Lemma~\ref{lemma:transitivity} there exists a $g'\in G\subset \DMon(X)$ so that $g'g$ fixes $B_{\delta/2} \delta$. But then $g'g$ acts on
\[
	(B_{\delta/2} \delta)^\perp = B_{\delta/2} \big( \bZ\alpha \oplus \rH^2(S,\bZ) \oplus \bZ\beta \big)
\]
with determinant $1$, and preserving the orientation of a maximal positive subspace. In particular, $g'g$ lies in the image of $\DMon(S) \to \DMon(X)$, and we conclude that $g$ lies in $\DMon(X)$.  
\end{proof}

The proof of the upper bound is now almost purely group-theoretical.

\begin{proposition}
$\SO(\Lambda)$ is the unique maximal arithmetic subgroup of $\SO(\Lambda\otimes_\bZ \bQ)$ containing $\SO^+(\Lambda)$.
\end{proposition}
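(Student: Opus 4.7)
\medskip

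\noindent\textbf{Proof plan.}
Let $V=\Lambda\otimes_\bZ\bQ$ and let $\Gamma\subset\SO(V)$ be an arithmetic subgroup with $\SO^+(\Lambda)\subseteq\Gamma$. The first step is to observe that since $\SO^+(\Lambda)$ has index $2$ in $\SO(\Lambda)$, both are (irreducible) lattices in the semisimple real group $\SO(V_\bR)\cong\SO(4,21)$; hence $\Gamma$, being arithmetic and containing the lattice $\SO^+(\Lambda)$, must itself be a lattice and in particular be commensurable with $\SO(\Lambda)$. Consequently $[\Gamma:\SO^+(\Lambda)]<\infty$, and the quotient $\Gamma/\SO^+(\Lambda)$ is a finite group that acts by conjugation on $\SO^+(\Lambda)$.

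The key idea is then to build a $\Gamma$-invariant lattice by averaging. Choose coset representatives $g_1,\ldots,g_n$ of $\SO^+(\Lambda)$ in $\Gamma$ and set
\[
	M := \bigcap_{i=1}^n g_i\Lambda.
\]
Since each $g_i\Lambda$ is preserved by $g_i\SO^+(\Lambda)g_i^{-1}$ and since the $g_i\SO^+(\Lambda)g_i^{-1}$ together with $\SO^+(\Lambda)$ all lie in $\Gamma$, the intersection $M$ is a full-rank $\bZ$-sublattice of $V$ preserved by $\Gamma$. This gives the inclusion $\Gamma\subseteq\SO(M)$, reducing the problem to showing $\SO(M)\subseteq\SO(\Lambda)$ and, if possible, $M=\Lambda$.

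For this last step, I would use the explicit structure of $\Lambda$: it is an even lattice of signature $(4,21)$ isometric to $U\oplus \rH^2(X,\bZ)$, and in particular contains a hyperbolic plane and has discriminant group of order $2$. The finite group $\Gamma/\SO^+(\Lambda)$ permutes the lattices $g_i\Lambda$, all of which lie in a single commensurability class and all of which have the same discriminant. Using that $\SO^+(\Lambda)$ acts on the discriminant form $A_\Lambda=\Lambda^{\ast}/\Lambda$ through a large subgroup (by Nikulin's surjectivity results, together with strong approximation for $\Spin$-groups in rank $\geq 2$), together with the normality of $\SO^+(\Lambda)$ in $\Gamma$ forced by the construction of $M$, one shows that every $g_i$ must in fact stabilise $\Lambda$ itself, so $M=\Lambda$ and $\Gamma\subseteq\SO(\Lambda)$. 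Combined with the inclusion $\SO(\Lambda)\supseteq\SO^+(\Lambda)$, this proves simultaneously that $\SO(\Lambda)$ is arithmetic and maximal among arithmetic subgroups containing $\SO^+(\Lambda)$, and that it is the unique such maximal subgroup.

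\medskip

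\noindent The main obstacle is the third paragraph: controlling the action of $\Gamma/\SO^+(\Lambda)$ on the finite collection $\{g_i\Lambda\}$, i.e.\ showing that no arithmetic overgroup of $\SO^+(\Lambda)$ can move $\Lambda$ to a commensurable but distinct even lattice of the same discriminant. This is essentially a rigidity statement for the even lattice $\Lambda\cong U\oplus\rH^2(X,\bZ)$ in its genus, and is where the specific lattice-theoretic input is needed; everything else is formal.
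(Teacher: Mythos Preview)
Your averaging construction in the second paragraph is fine and gives $\Gamma\subset\SO(M)$ for a $\Gamma$-invariant lattice $M\subset V$. The difficulty is entirely in your third paragraph, and the tools you invoke there are not the right ones. The discriminant group $A_\Lambda$ has order~$2$, so $\rO(A_\Lambda)$ is trivial and Nikulin's surjectivity gives you nothing. Also, the construction of $M$ does \emph{not} force $\SO^+(\Lambda)$ to be normal in $\Gamma$; you only used that it is a finite-index subgroup. What you actually need is that every $\SO^+(\Lambda)$-invariant lattice in $V$ is homothetic to $\Lambda$, and to prove this you would have to localise and invoke strong approximation for $\bSpin$ to show that the $p$-adic closure of $\SO^+(\Lambda)$ acts transitively enough on lattices in $V_{\bQ_p}$. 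This can be done, but it amounts to reproving the structural input that the paper uses directly.

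The paper bypasses the lattice-averaging entirely by passing to the simply connected cover. Since $\Lambda$ is even and the half-integral quadratic form is semi-regular, the group schemes $\bSpin(\Lambda)$ and $\bSO(\Lambda)$ are smooth over $\Spec\bZ$; hence $\Spin(\Lambda_{\bZ_p})$ and $\SO(\Lambda_{\bZ_p})$ are hyperspecial, so $\Spin(\Lambda)$ and $\SO(\Lambda)$ are already maximal arithmetic. Given a maximal arithmetic $\Gamma\supset\SO^+(\Lambda)$, its preimage $\widetilde\Gamma$ in $\Spin(V)$ contains $\Spin(\Lambda)$ and is arithmetic, hence equals $\Spin(\Lambda)$. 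Thus $\SO^+(\Lambda)=\ker(\Gamma\to\bQ^\times/(\bQ^\times)^2)$ via the spinor norm, which is what gives normality of $\SO^+(\Lambda)$ in $\Gamma$. Then $\Gamma$ lies in the normaliser of $\SO^+(\Lambda)$, which is arithmetic and contains $\SO(\Lambda)$; maximality of both forces $\Gamma=\SO(\Lambda)$. The lift to $\bSpin$ is the key move your plan is missing.
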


\begin{proof}
More generally, this holds for any even lattice $\Lambda$ with the property that the
quadratic form $q(x)=b(x,x)/2$ on the $\bZ$-module $\Lambda$ is semi-regular \cite[\S~IV.3]{Knus}. 

For such $\Lambda$, the group schemes  $\bSpin(\Lambda)$ and $\bSO(\Lambda)$ are smooth over $\Spec \bZ$, see \emph{e.g.}~\cite{Ikai05}. In particular, for every prime $p$ the subgroups
$\Spin(\Lambda \otimes \bZ_p)$ and $\SO(\Lambda\otimes\bZ_p)$ of   $\Spin(\Lambda\otimes \bQ_p)$ resp.~$\SO(\Lambda\otimes\bQ_p)$ are maximal compact subgroups. It follows that the groups
\[
	\Spin(\Lambda) = \Spin(\Lambda\otimes \bQ) \bigcap \prod_p \Spin(\Lambda \otimes \bZ_p)
\]
and
\[
	\SO(\Lambda) = \SO(\Lambda\otimes \bQ) \bigcap \prod_p \SO(\Lambda \otimes \bZ_p)
\]
are maximal arithmetic subgroups of $\Spin(\Lambda\otimes \bQ)$ resp.~$\SO(\Lambda\otimes\bQ)$. 

The subgroup $\SO^+(\Lambda) \subset \SO(\Lambda)$ is the kernel of the spinor norm, and
the short exact sequence $1 \to \mu_2 \to \bSpin \to \bSO \to 1$ of fppf sheaves on $\Spec \bZ$ induces an exact sequence of groups
\[
	1 \longto \{ \pm1 \} \longto \Spin(\Lambda) \longto \SO^+(\Lambda) \longto 1.
\]
Let $\Gamma \subset \SO(\Lambda\otimes\bQ)$ be a maximal arithmetic subgroup containing $\SO^+(\Lambda)$. Let $\widetilde\Gamma$ be its inverse image in $\Spin(\Lambda\otimes \bQ)$, so that we have an exact sequence
\[
	1 \longto \{\pm 1\} \longto \widetilde\Gamma \longto \Gamma \longto \bQ^\times/2
\]
Since the group $\widetilde\Gamma$ is arithmetic and contains $\Spin(\Lambda)$, we have $\widetilde\Gamma = \Spin(\Lambda)$. Moreover, $\Gamma$ normalizes $\SO^+(\Lambda)=\ker(\Gamma\to\bQ^\times/2)$, and as the normalizer of an arithmetic subgroup of $\SO(\Lambda\otimes\bQ)$ is again arithmetic, we have that $\Gamma$ must equal the normalizer of $\SO^+(\Lambda)$. But then $\Gamma$ contains $\SO(\Lambda)$, and we conclude $\Gamma=\SO(\Lambda)$.
\end{proof}

\begin{corollary} $\DMon(X) \subset \rO(\Lambda)$.
\end{corollary}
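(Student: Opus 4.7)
The plan is to deduce the corollary from the preceding proposition by showing that $\DMon(X)$ is an arithmetic subgroup of $\rO(\widetilde\rH(X,\bQ))$ containing $\rO^+(\Lambda)$. The inclusion $\DMon(X) \subset \rO(\widetilde\rH(X,\bQ))$ is already recorded in the discussion preceding equation~(\ref{eq:BKR-action}), and the containment $\rO^+(\Lambda) \subset \DMon(X)$ is Proposition~\ref{prop:lower-bound-dmon}; only arithmeticity remains to be established.

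For arithmeticity I would use topological $K$-theory. The topological Mukai vector $v_X^\top$ has image a full-rank integral lattice $M \subset \rH(X,\bQ)$, and this lattice is preserved both under parallel transport in smooth families (by deformation-invariance of $\rK^0_\top$) and under convolution with topological Fourier--Mukai kernels. Hence every derived monodromy operator preserves $M$. Since $\DMon(X)$ acts through $\rO(\widetilde\rH(X,\bQ))$, it also preserves the Verbitsky component $\SH(X,\bQ)$, and therefore preserves the full-rank integral lattice $M_\SH := M \cap \SH(X,\bQ)$. As the representation $\rho\colon \rO(\widetilde\rH(X,\bQ)) \to \GL(\SH(X,\bQ))$ (twisted by the determinant character as in Remark~\ref{rmk:explicit-normalizer}) is an algebraic $\bQ$-morphism with finite kernel, the preimage of the stabilizer of $M_\SH$ in $\GL(\SH(X,\bQ))$ is an arithmetic subgroup of $\rO(\widetilde\rH(X,\bQ))$ containing $\DMon(X)$.

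With arithmeticity in hand, intersecting with $\SO$ produces an arithmetic subgroup of $\SO(\widetilde\rH(X,\bQ))$ containing $\SO^+(\Lambda)$, and the preceding proposition forces this intersection to lie in $\SO(\Lambda)$. To promote this to $\rO$, I would use that $-\mathrm{id} \in \rO^+(\Lambda) \subset \DMon(X)$ has determinant $-1$, since $\dim\widetilde\rH(X,\bQ) = b_2(X)+2 = 25$ is odd for varieties of type $\mathrm{K3}^{[2]}$. Any $g \in \DMon(X)$ with $\det g = -1$ can then be multiplied by $-\mathrm{id}$ to land in $\SO(\widetilde\rH(X,\bQ)) \cap \DMon(X) \subset \SO(\Lambda)$, yielding $g \in \rO(\Lambda)$.

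The main obstacle is the arithmeticity step: one must check that the integral lattice preserved under parallel transport in a smooth family agrees, at each point, with the lattice preserved under any Fourier--Mukai-induced map, so that the composition defining a derived monodromy operator genuinely preserves a common integral lattice in $\rH(X,\bQ)$. This should follow from deformation-invariance of $\rK^0_\top$ together with the compatibility between $v^\top$ and Fourier--Mukai recalled in \S~7.1, extended to smooth families, but the details would need to be written out with care.
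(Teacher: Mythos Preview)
Your proposal is correct and follows essentially the same route as the paper: use that $\DMon(X)$ preserves the integral lattice coming from $\rK^0_\top(X)$ to establish arithmeticity, then combine with the lower bound $\rO^+(\Lambda)\subset\DMon(X)$ and the preceding proposition, handling the passage from $\SO$ to $\rO$ via the odd rank of $\widetilde\rH(X,\bQ)$. The paper is terser (it uses the product decomposition $\rO=\SO\times\{\pm1\}$ directly rather than multiplying by $-\id$), and your detour through $\SH(X,\bQ)$ is unnecessary here since $\rH(X,\bQ)=\SH(X,\bQ)$ for type $\mathrm{K3}^{[2]}$, but these are cosmetic differences.
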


\begin{proof}
$\DMon(X)$ preserves the integral lattice $\rK_\top(X)$ in the representation $\rH(X,\bQ)$ of $\rO(\widetilde\rH(X,\bQ))$, and hence is contained in an arithmetic subgroup of $\rO(\widetilde\rH(X,\bQ))=\SO(\widetilde\rH(X,\bQ)) \times \{\pm 1\}$. By Proposition~\ref{prop:lower-bound-dmon} it contains $\SO^+(\Lambda)\times \{\pm 1\}$, so we conclude from the preceding proposition that $\DMon(X)$ must be contained in $\rO(\Lambda)$.
\end{proof}

Together with Proposition~\ref{prop:lower-bound-dmon} this proves Theorem~\ref{thm:derived-monodromy}.

\section{The image of $\Aut(\cD X)$ on $\rH(X,\bQ)$}

\subsection{Upper bound for the image of $\rho_X$}\label{subsec:hodge}
We continue with the notation of the previous section. In particular, we denote by $X$ a hyperk\"ahler variety of type $\mathrm{K3}^{[2]}$, and by $\Lambda \subset \widetilde\rH(X,\bQ)$ the lattice defined in \S~\ref{subsec:lower-bound}. We equip $\widetilde\rH(X,\bQ)$ with the weight~$0$ Hodge structure
\[
	\widetilde\rH(X,\bQ) = \bQ \alpha \oplus \rH^2(X,\bQ(1)) \oplus \bQ \beta.
\]
We denote by $\Aut(\Lambda) \subset \rO(\Lambda)$ the group of isometries of $\Lambda$ that preserve this Hodge structure.

\begin{proposition}\label{prop:hodge}
$\im(\rho_X) \subset \Aut(\Lambda)$. 
\end{proposition}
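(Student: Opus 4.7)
The plan is to combine Theorem~\ref{thm:derived-monodromy} with the Hodge-theoretic refinement of Theorem~\ref{thm:even-mukai-functoriality}. By construction of $\DMon(X)$, the image $\im(\rho_X) \subset \GL(\rH(X,\bQ))$ is contained in the image of the natural map $\DMon(X) \to \GL(\SH(X,\bQ))$ coming from Theorem~\ref{thm:normalizer} and Remark~\ref{rmk:explicit-normalizer}. Consequently Theorem~\ref{thm:derived-monodromy} already yields $\im(\rho_X^{\widetilde\rH}) \subset \rO(\Lambda)$ as subgroups of $\rO(\widetilde\rH(X,\bQ))$. So the only content left in the proposition is that every element in the image of $\rho_X^{\widetilde\rH}$ is in addition a Hodge isometry of $\widetilde\rH(X,\bQ)$.

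For this I would invoke Theorem~\ref{thm:even-mukai-functoriality}, which applies here because $X$ is of type $\mathrm{K3}^{[2]}$ and therefore has dimension $2d=4$ with $d=2$ even and $b_2=23$ odd. Fixing an orientation on $X$ and taking $X_1=X_2=X$, the theorem attaches to every auto-equivalence $\Phi$ a Hodge isometry $\Phi^{\widetilde\rH} \in \Aut \widetilde\rH(X,\bQ)$ whose action on $\Sym^d \widetilde\rH(X,\bQ)$ agrees with $\epsilon(\Phi^{\widetilde\rH})\Phi^{\SH}$ via the embedding $\Psi$ of Proposition~\ref{prop:psi}. Comparing with Theorem~\ref{thm:normalizer} / Remark~\ref{rmk:explicit-normalizer}, the element $\rho_X^{\widetilde\rH}(\Phi) \in \rO(\widetilde\rH(X,\bQ))$ and the element $\Phi^{\widetilde\rH}$ both induce the same operator on $\SH(X,\bQ)$ up to a sign, hence they differ by at most $\pm 1 \in \rO(\widetilde\rH(X,\bQ))$. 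Since $\pm 1$ is trivially a Hodge isometry, $\rho_X^{\widetilde\rH}(\Phi)$ is a Hodge isometry too.

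Combining the two inputs, $\rho_X^{\widetilde\rH}(\Phi)$ lies in $\rO(\Lambda)$ and preserves the weight $0$ Hodge structure on $\widetilde\rH(X,\bQ)$, hence it lies in $\Aut(\Lambda)$, proving $\im(\rho_X) \subset \Aut(\Lambda)$. The only delicate point is the bookkeeping in the second paragraph: matching the two a priori different recipes for lifting an auto-equivalence to an element of $\rO(\widetilde\rH(X,\bQ))$ and confirming that they agree up to the central $\{\pm 1\}$ which is automatically Hodge-preserving. This is precisely the content of Proposition~\ref{prop:functor-aut} in the $d$-even case, so no new computation is needed.
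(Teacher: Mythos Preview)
Your argument is correct. Both you and the paper use Theorem~\ref{thm:derived-monodromy} to get $\im(\rho_X^{\widetilde\rH})\subset\rO(\Lambda)$; the difference lies in how the Hodge condition is verified. You invoke Theorem~\ref{thm:even-mukai-functoriality} directly, which already asserts that $\Phi^{\widetilde\rH}$ is a Hodge isometry, and then compare it with $\rho_X^{\widetilde\rH}(\Phi)$. The paper instead argues intrinsically: since $\Phi^\rH$ preserves the weight-$0$ Hodge structure on $\rH^\ev(X,\bQ)$, the induced map $\Phi^\fg$ on $\fg(X)=\fso(\widetilde\rH(X,\bQ))$ is a Hodge isomorphism, and this forces the lift to $\rO(\widetilde\rH(X,\bQ))$ to respect the Hodge structure.

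Your route is shorter because the work has already been packaged in Theorem~\ref{thm:even-mukai-functoriality}; the paper's route is more self-contained and makes the mechanism (Hodge structure on the Lie algebra) visible. One minor point: your ``differ by at most $\pm 1$'' is overly cautious. Since $\rho_X^{\widetilde\rH}$ is \emph{defined} by taking $X_1=X_2=X$ in Theorem~\ref{thm:isom-even} (see the sentence preceding Theorem~\ref{thm:normalizer}), the two elements are equal on the nose, and there is no bookkeeping to do. Indeed, for $\dim\widetilde\rH(X,\bQ)=25$ odd one has $\epsilon=\det$, so the characterizations in Remark~\ref{rmk:explicit-normalizer} and Theorem~\ref{thm:even-mukai-functoriality} coincide; and since $\det(-1)=-1$ in odd dimension, the possibility $g=-g'$ is already excluded.
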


\begin{proof}
By Theorem~\ref{thm:derived-monodromy} we have $\im(\rho_X) \subset \rO(\Lambda)$. The Hodge structure on
\[
	\rH(X,\bQ) = \oplus_{n=0}^4 \rH^{2n}(X,\bQ(n)),
\]
induces a Hodge structure on $\fg(X) \subset \End(\rH(X,\bQ))$, which agrees with the Hodge structure on $\fso(\widetilde\rH(X,\bQ))$ induces by the Hodge structure on $\widetilde\rH(X,\bQ)$. If $\Phi\colon \cD X\isomto \cD X$ is an equivalence, then $\Phi^\rH \colon \rH(X,\bQ) \isomto \rH(X,\bQ)$ and $\Phi^\fg \colon \fg(X) \isomto \fg(X)$ are isomorphisms of $\bQ$-Hodge structures, from which it follows that $\Phi^\rH$ must land in $\Aut(\Lambda) \subset \rO(\Lambda)$.
\end{proof}

\subsection{Lower bound for the image of $\rho_X$}\label{subsec:the-image}
We write $\Aut^+(\Lambda)$ for the index $2$ subgroup $\Aut(\Lambda) \cap \rO^+(\Lambda)$ of $\Aut(\Lambda)$.  

\begin{theorem}
Let $S$ be a K3 surface and let $X$ be the Hilbert square of $S$. Assume that $\NS(X)$ contains a hyperbolic plane. Then $\Aut^+(\Lambda) \subset \im \rho_X \subset \Aut(\Lambda)$.
\end{theorem}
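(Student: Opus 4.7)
The upper bound $\im \rho_X \subset \Aut(\Lambda)$ is immediate from Proposition~\ref{prop:hodge}. For the lower bound I plan to adapt the argument of Proposition~\ref{prop:lower-bound-dmon}, replacing deformation-theoretic monodromy operators with honest derived auto-equivalences.

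The input is three families of auto-equivalences of $\cD X$ whose images in $\rO(\widetilde\rH(X,\bQ))$ I can control. The shift functor $[1]$ acts as $-1$. By Proposition~\ref{prop:exp-chern}, tensoring with a line bundle $\cL \in \Pic(X)$ acts as $B_{c_1(\cL)}$, so $B_\mu \in \im \rho_X$ for every $\mu \in \NS(X)$. Finally, Ploog's construction $\Phi \mapsto \Phi^{[2]}$, combined with Theorem~\ref{thm:BKR-action} and the K3 theorem $\im\rho_S = \Aut^+(\widetilde\rH(S,\bZ))$, gives $h(\Aut^+(\widetilde\rH(S,\bZ))) \subset \im \rho_X$. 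Let $H \subset \im \rho_X$ denote the subgroup generated by these; each generator is easily seen to preserve the Hodge structure and the orientation on a positive $4$-plane, so $H \subset \Aut^+(\Lambda)$.

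Given $\phi \in \Aut^+(\Lambda)$, the strategy is to use a transitivity argument to reduce to the stabiliser inside $\Aut^+(\Lambda)$ of the distinguished algebraic $(-2)$-vector $B_{\delta/2}\delta = \delta - \beta \in \Lambda$. Since $\phi$ respects the Hodge structure it preserves the algebraic part $\Lambda_{\mathrm{alg}} \subset \Lambda$. By hypothesis $\NS(X)$ contains a hyperbolic plane, so Lemma~\ref{lemma:transitivity} applies with $L = \NS(X)$; a short verification, using $-1 \in H$ and conjugation by $B_\delta \in H$, shows that the images in $\rO(\Lambda_{\mathrm{alg}})$ of the generators $B_\mu$ (for $\mu \in \NS(X)$) and of $h(\gamma)$ together generate the subgroup featured in the lemma. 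One thus obtains $h_0 \in H$ such that $h_0 \phi$ fixes $B_{\delta/2}\delta$.

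Then $h_0\phi$ preserves the orthogonal complement $(B_{\delta/2}\delta)^\perp \cap \Lambda = B_{\delta/2}(\widetilde\rH(S,\bZ))$, a copy of the K3-Mukai lattice carrying its natural Hodge structure. The restriction $\psi' := (h_0\phi)|_{B_{\delta/2}(\widetilde\rH(S,\bZ))}$ is a Hodge isometry preserving orientation, hence an element of $\Aut^+(\widetilde\rH(S,\bZ)) = \im \rho_S$. Using the formula $h(\psi) = \det(\psi) \cdot B_{\delta/2}\iota(\psi) B_{-\delta/2}$ from Theorem~\ref{thm:BKR-action}, comparing the actions of $h(\psi')$ and $h_0\phi$ both on $B_{\delta/2}(\widetilde\rH(S,\bZ))$ and on the complementary line $\bZ(\delta-\beta)$ yields $h_0\phi = \det(\psi') \cdot h(\psi')$, which lies in $H$ since both $h(\psi')$ and $-1$ do. Hence $\phi = h_0^{-1}(h_0\phi) \in \im \rho_X$. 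The main obstacle is this last sign-and-determinant bookkeeping, compounded by the fact that $\Lambda$ contains two natural copies $B_{\pm\delta/2}(\widetilde\rH(S,\bZ))$ of the K3-Mukai lattice: the subgroup $h(\Aut^+(\widetilde\rH(S,\bZ)))$ stabilises $B_{\delta/2}\delta$, which is why I stabilise this vector rather than the $B_{-\delta/2}\delta$ used in Proposition~\ref{prop:lower-bound-dmon}.
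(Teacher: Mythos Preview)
Your proposal is correct and follows essentially the same strategy as the paper: reduce via Lemma~\ref{lemma:transitivity} to the stabiliser of a distinguished $(-2)$-vector, then identify that stabiliser with the image of $\Aut(\cD S)$ under the McKay map $h$. Your choice to stabilise $B_{\delta/2}\delta$ rather than the paper's $B_{-\delta/2}\delta$, together with the explicit $\det(\psi')$ bookkeeping, is a harmless and well-motivated variant --- indeed in $B_{\delta/2}$-coordinates the element $h(\gamma_S)$ acts precisely as the $\gamma$ of Lemma~\ref{lemma:transitivity}, so your choice makes both the transitivity step and the final identification cleaner than the paper's somewhat terse version.
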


\begin{proof}In view of Proposition~\ref{prop:hodge} we only need to show the lower bound.
The argument for this is entirely parallel to the proof of Proposition \ref{prop:lower-bound-dmon}.
Recall that we have
\[
	\Lambda = B_{\delta/2} \big( \bZ \alpha \oplus \rH^2(S,\bZ((1)) \oplus \bZ \delta \oplus \bZ \beta \big).
\]

The shift functor $[1]\in  \Aut(\cD X)$ maps to $-1\in \Aut^+(\Lambda)$, so it suffices to show that $\Aut^+(\Lambda) \cap \SO(\Lambda)$ is contained in $\im \rho_X$.

Let $\gamma_S \in \Aut(\cD S)$ be the composition of the spherical twist in $\cO_S$ with the shift $[1]$.
On the Mukai lattice $\widetilde\rH(S,\bZ) = \bZ \alpha \oplus \rH^2(X,\bZ(1)) \oplus \bZ \beta$ this equivalence maps $\alpha$ to $-\beta$ and $\beta$ to $-\alpha$ and is the identity on $\rH^2(S,\bZ)$. Under the derived McKay correspondence this induces an autoequivalence $\gamma_X \in \Aut \cD X$. By Theorem~\ref{thm:BKR-action}, the automorphism $\rho_X(\gamma_X) \in \Aut(\Lambda)$   interchanges $B_{\delta/2}\alpha$ and $B_{\delta/2}\beta$ and acts by $-1$ on
$B_{\delta/2} \rH^2(X,\bZ)$. 

 Denote by $G\subset \Aut(\Lambda)$ the subgroup generated by $\rho_X(\gamma_X)$ and 
 the isometries $B_{\lambda}=\rho_X(-\otimes \cL)$ with $\cL$ a line bundle of class $\lambda \in \NS(X)$. Clearly $G$ is contained in the image of $\rho_X$. Note that $G$ acts on the lattice
 \[
	\Lambda_{\mathrm{alg}} := B_{\delta/2} \big( \bZ \alpha \oplus \NS(X) \oplus \bZ \beta\big)
\]
and that by our assumption $\NS(X)$ contains a hyperbolic plane.

Let $g\in \Aut^+(\Lambda)$. By Lemma~\ref{lemma:transitivity} applied to $L=\NS(X)$, there exists a $g'\in G$ such that $g'g$ fixes $B_{\delta/2} \delta$.  But then $g'g$ acts on
\[
	(B_{\delta/2} \delta)^\perp = B_{\delta/2} \big( \bZ\alpha \oplus \rH^2(S,\bZ) \oplus \bZ\beta \big)
\]
with determinant $1$, and preserving the Hodge structure and the orientation of a maximal positive subspace. In particular, $g'g$ lies in the image of $\Aut(\cD S)$, and we conclude that $g$ lies in $\im \rho_X$.  
\end{proof}


\begin{thebibliography}{99}


\bibitem{AddingtonThomas14} 
	{\sc Addington, N. \& Thomas, R.} ---
	{\it Hodge theory and derived categories of cubic fourfolds.} 
	Duke Math. J. {\bf 163} (2014), no. 10, 1885--1927. 

\bibitem{AH61}
	{\sc Atiyah, M.F. \& Hirzebruch, F.} ---
	{\it Vector bundles and homogeneous spaces.}
	In: 1961 Proc. Sympos. Pure Math., Vol. III pp. 7--38.
	American Mathematical Society, Providence, R.I. 

\bibitem{AH62}
	{\sc Atiyah, M.F. \& Hirzebruch, F.} ---
	{\it The Riemann-Roch theorem for analytic embeddings.}
	Topology {\bf 1} (1962), 151--166. 	
	
\bibitem{AtiyahSegal68}
	{\sc Atiyah, M.F. \& Segal, G.B.} ---
	{\it The index of elliptic operators. II.}
	Ann. of Math. (2) {\bf 87} (1968), 531--545.

\bibitem{BFM79}
	{\sc Baum, P. \& Fulton, W. \& MacPherson, R.} ---
	{\it Riemann-Roch and topological $K$-theory for singular varieties.}
	Acta Math. {\bf 143} (1979), no. 3-4, 155--192. 

\bibitem{Beauville83}
	{\sc Beauville, A.} ---
	{\it Vari\'et\'es K\"ahleriennes dont la premi\`ere classe de Chern est nulle.}
	J. Differential Geom. {\bf 18} (1983), no. 4, 755--782 (1984). 


\bibitem{Blanc16}
	{\sc Blanc,  A.} ---
	{\it Topological $K$-theory of complex noncommutative spaces.}
	Compos. Math. {\bf 152} (2016), no. 3, 489--555. 
	
\bibitem{Bogomolov96}
	{\sc Bogomolov, F. A.} ---
	{\it On the cohomology ring of a simple hyper-K\"ahler manifold (on the results of Verbitsky).}
	Geom. Funct. Anal. {\bf 6} (1996), no. 4, 612--618.	
	
\bibitem{BondalOrlov01}
	{\sc Bondal, A.I. \&  Orlov, D.O.} ---
	{\it Reconstruction of a variety from the derived category and groups of autoequivalences.}
	Compositio Math. {\bf 125} (2001), no. 3, 327--344. 

\bibitem{Bridgeland}
	{\sc Bridgeland, T.} ---
	{\it Stability conditions on K3 surfaces.}
	Duke Math. J. {\bf 141} (2008), no. 2, 241--291. 

\bibitem{BKR}
	{\sc Bridgeland, T. \& King, A. \& Reid, M.} ---
	{\it The McKay correspondence as an equivalence of derived categories.}
	J. Amer. Math. Soc. {\bf 14} (2001), no. 3, 535--554. 

\bibitem{CRVdB} 
	{\sc Calaque, D. \& Rossi, C.A. \& Van den Bergh, M.} ---
	{\it C\u{a}ld\u{a}raru's conjecture and Tsygan's formality.}
	Ann. of Math. (2) {\bf 176} (2012), no. 2, 865--923. 
	
\bibitem{CVdB} 
	{\sc Calaque, D. \& Van den Bergh, M.} ---
	{\it Hochschild cohomology and Atiyah classes.}
	Adv. Math. {\bf 224} (2010), 1839--1889. 	

\bibitem{Caldararu03}
	{\sc C\u{a}ld\u{a}raru, A.} ---
	{\it The Mukai pairing, I: the Hochschild structure.}
	{\tt arXiv:math/0308079}
		
\bibitem{Caldararu06}
	{\sc C\u{a}ld\u{a}raru, A.} ---
	{\it The Mukai pairing. II. The Hochschild-Kostant-Rosenberg isomorphism.} 
	Adv. Math. {\bf 194} (2005), no. 1, 34--66. 

\bibitem{Dijkgraaf99}
	{\sc Dijkgraaf, R.} ---
	{\it Instanton strings and hyper-K\"ahler geometry.}
	Nuclear Phys. B {\bf 543} (1999), no. 3, 545--571.

\bibitem{Eichler52}
	{\sc Eichler, M.} ---
	{\it Quadratische Formen und orthogonale Gruppen.}
	Die Grundlehren der mathematischen Wissenschaften. 
	Band LXIII. Springer--Verlag, Berlin--G\"ottingen--Heidelberg, 1952.

\bibitem{EGL}
	{\sc Ellingsrud, G. \& G\"ottsche, L. \& Lehn, M.} ---
	{\it On the cobordism class of the Hilbert scheme of a surface.}
	J. Algebraic Geom. {\bf 10} (2001), no. 1, 81--100. 
	

\bibitem{GolyshevLuntsOrlov01} 
	{\sc Golyshev, V.V. \& Lunts, V.A. \& Orlov, D.O.} ---
	{\it Mirror symmetry for abelian varieties.}
	J. Algebraic Geom. {\bf 10} (2001), no. 3, 433--496. 

\bibitem{GHS}
	{\sc Gritsenko, V. \& Hulek, K \& Sankaran, G.K.} ---
	{\it Abelianisation of orthogonal groups and the fundamental group of modular varieties.}
	J. Algebra {\bf 322} (2009), no. 2, 463--478. 	
	
\bibitem{Haiman}
	{\sc Haiman, M.} ---
	{\it Hilbert schemes, polygraphs and the Macdonald positivity conjecture.}
	J. Amer. Math. Soc. {\bf 14} (2001), no. 4, 941--1006. 


\bibitem{HLOY}
	{\sc Hosono, S. \& Lian, B. \& Oguiso, K. \& Yau, S.-T.} ---
	{\it Autoequivalences of derived category of a K3 surface and monodromy transformations.}
	J. Algebraic Geom. {\bf 13} (2004), 513--545. 

\bibitem{Huybrechts03}
	{\sc Huybrechts, D.} ---
	{\it Compact hyperk\"ahler manifolds}. 
	In: Calabi-Yau manifolds and related geometries (Nordfjordeid, 2001), 161--225, 
	Universitext, Springer, Berlin, 2003. 
	
\bibitem{HuybrechtsFM}
	{\sc Huybrechts, D.} ---
	{\it Fourier-Mukai transforms in algebraic geometry.}
	Oxford Mathematical Monographs. The Clarendon Press, Oxford University Press, Oxford, 2006.

\bibitem{HMS08}
	{\sc Huybrechts, D. \& Macr\`i, E. \& Stellari, P.} ---
	{\it Stability conditions for generic K3 categories.}
	Compos. Math. {\bf 144} (2008), no. 1, 134--162. 
	
\bibitem{HMS}
	{\sc Huybrechts, D. \& Macr\`i, E. \& Stellari, P.} ---
	{\it Derived equivalences of K3 surfaces and orientation.}
	Duke Math. J. {\bf 149} (2009), no. 3, 461--507.

\bibitem{HuybrechtsNieperWisskirchen11}
	{\sc Huybrechts, D. \&  Nieper-Wisskirchen, M.} ---
	{\it Remarks on derived equivalences of Ricci-flat manifolds}. 
	Math. Z. {\bf 267} (2011), no. 3-4, 939--963.
	
\bibitem{Ikai05}
	{\sc Ikai, H.} ---
	{\it Spin groups over a commutative ring and the associated root data (odd rank case).} 
	Beitr\"age Algebra Geom.~{\bf 46} (2005), no. 2, 377--395. 

\bibitem{Joshua99} {\sc Joshua, R.} ---
	{\it Equivariant Riemann-Roch for G-quasi-projective varieties. I.} 
	K-Theory {\bf 17} (1999), no. 1, 1--35. 

\bibitem{Knus}
	{\sc Knus, M.-A.} ---
	{\it Quadratic and Hermitian forms over rings.}
	Grundlehren der Mathematischen Wissenschaften, 294. Springer-Verlag, Berlin, 1991.

\bibitem{Kontsevich03}
	{\sc Kontsevich, M.} ---
	{\it Deformation quantization of Poisson manifolds.}
	Lett. Math. Phys. {\bf 66} (2003), no. 3, 157--216. 

\bibitem{Kuleshov90}
	{\sc Kuleshov, S.A.} ---
	{\it A theorem on the existence of exceptional bundles on surfaces of type K3.}
	Izv. Akad. Nauk SSSR Ser. Mat. {\bf 53} (1989), no. 2, 363--378;
	translation in Math. USSR-Izv. {\bf 34} (1990), no. 2, 373--388.

\bibitem{Krug15}
	{\sc Krug, A.} ---
	{\it On derived autoequivalences of Hilbert schemes and generalized Kummer varieties.}
	Int. Math. Res. Not. IMRN 2015, no. 20, 10680--10701. 

\bibitem{LooijengaLunts97} {\sc Looijenga, E. \& Lunts, V.A.} ---
	{\it A Lie algebra attached to a projective variety}.
	Invent. Math. {\bf 129} (1997), no. 2, 361--412.
	
\bibitem{MacriStellari09} {\sc Macr\'i, E. \& Stellari, P.} ---
	{\it Infinitesimal derived Torelli theorem for K3 surfaces.}
	Int. Math. Res. Not. IMRN 2009, no. 17, 3190--3220. 
	
\bibitem{Mukai87} {\sc Mukai, S.} ---
	{\it On the moduli space of bundles on K3 surfaces. I.}
	In: Vector bundles on algebraic varieties (Bombay, 1984), 341--413, 
	Tata Inst. Fund. Res. Stud. Math., 11, Tata Inst. Fund. Res., Bombay, 1987. 	
	
\bibitem{Orlov97} {\sc Orlov, D.O.} ---
	{\it Equivalences of derived categories and K3 surfaces.}
	Algebraic geometry, 7. J. Math. Sci. (New York) {\bf 84} (1997), no. 5, 1361--1381. 

\bibitem{Orlov03} {\sc Orlov, D.O.} ---
	{\it Derived categories of coherent sheaves and equivalences between them.}
	Russian Math. Surveys {\bf 58} (2003), no. 3, 511--591.

\bibitem{Orlov05} {\sc Orlov, D.O.} ---
	{\it Derived categories of coherent sheaves, and motives.} 
	Russian Math. Surveys {\bf 60} (2005), no. 6, 1242--1244.
	
\bibitem{Ploog05}
	{\sc Ploog, D.} ---
	{\it Groups of autoequivalences of derived categories of smooth projective varieties.}
	Ph.D. thesis, Freie Universit\"at, Berlin, 2005.
	
\bibitem{Ploog07}
	{\sc Ploog, D.} ---
	{\it Equivariant autoequivalences for finite group actions.}
	Adv. Math. {\bf 216} (2007), no. 1, 62--74. 

\bibitem{Segal68}
	{\sc Segal, G.} ---
	{\it Equivariant $K$-theory.}
	 Inst. Hautes \'Etudes Sci. Publ. Math. {\bf 34} (1968), 129--151. 

\bibitem{Soldatenkov}
	{\sc Soldatenkov, A.} ---
	{\it On the Hodge structures of compact hyperk\"ahler manifolds.}
	Preprint: {\tt arXiv:1905.07793}.

\bibitem{Szendroi}
	{\sc Szendr\H{o}i, B.} ---
	{\it Diffeomorphisms and families of Fourier--Mukai transforms in mirror symmetry.}
	Applications of algebraic geometry to coding theory, physics and computation (Eilat, 2001), 317--337, 
	NATO Sci. Ser. II Math. Phys. Chem., 36, Kluwer Acad. Publ., Dordrecht,2001.

\bibitem{Verbitsky95} {\sc Verbitsky, M.} ---
	{\it Cohomology of compact hyperkaehler manifolds.}
	{\tt arXiv:alg-geom/9501001}.

\bibitem{Verbitsky96} {\sc Verbitsky, M.} ---
	{\it Cohomology of compact hyper-K\"ahler manifolds and its applications}.
	Geom. Funct. Anal. {\bf 6} (1996), no. 4, 601--611.

\bibitem{Verbitsky99} {\sc Verbitsky, M.} ---
	{\it Mirror symmetry for hyper-K\"ahler manifolds}.
	in: Mirror symmetry, III (Montreal, PQ, 1995), 115--156, 
	AMS/IP Stud. Adv. Math., 10, Amer. Math. Soc., Providence, RI, 1999. 


\end{thebibliography}
\end{document}